\DeclareMathOperator{\Hom}{\mathrm{Hom}}
\DeclareMathOperator{\End}{\mathrm{End}}
\DeclareMathOperator{\Aut}{\mathrm{Aut}}
\DeclareMathOperator{\SO}{\mathrm{SO}}
\DeclareMathOperator{\GL}{\mathrm{GL}}
\DeclareMathOperator{\SL}{\mathrm{SL}}
\DeclareMathOperator{\tr}{\mathrm{tr}}
\DeclareMathOperator{\OG}{\mathrm{OG}}
\DeclareMathOperator{\IG}{\mathrm{IG}}
\DeclareMathOperator{\Fl}{\mathrm{Fl}}
\newtheorem{theorem}{Theorem}[section]
\newtheorem{lemma}[theorem]{Lemma}
\newtheorem{thm}[theorem]{Theorem}
\newtheorem{prop} [theorem]{Proposition}
\theoremstyle{definition}
\newtheorem{definition}[theorem]{Definition}
\newtheorem{remark}[theorem]{Remark}
\DeclareMathOperator{\poly}{\mathrm{poly}}
\DeclareMathOperator{\Rep}{\mathrm{Rep}}
\DeclareMathOperator{\diag}{\mathrm{diag}}
\DeclareMathOperator{\sym}{\mathrm{sym}}
 \newcommand{\C}{\mathbb{C}}
 \newcommand{\Z}{\mathbb{Z}}
\newcommand{\cl}{\mathcal{L}}
\newcommand{\ft}{\mathfrak{t}}
\newcommand{\rep}{\text{Rep}}
\begin{document}

\title[Representation ring of Levi subgroups versus cohomology]
{Representation ring of Levi subgroups versus cohomology ring of flag varieties III}

\author{Shrawan Kumar and Jiale Xie}
\maketitle

\begin{abstract} For any reductive group $G$ and a parabolic subgroup $P$ with its Levi subgroup $L$, the first author  [Ku2] introduced a ring homomorphism 
$ \xi^P_\lambda:  \Rep^\C_{\lambda-\poly}(L) \to H^*(G/P, \C)$, where  $ \Rep^\C_{\lambda-\poly}(L)$ is a certain subring of the complexified representation ring of $L$ (depending upon the choice of an irreducible representation $V(\lambda)$ of $G$ with highest weight $\lambda$). In this paper we study this homomorphism for $G=\SO(2n)$ and its maximal parabolic subgroups $P_{n-k}$ for any $2\leq k\leq n-1$ (with the choice of $V(\lambda) $ to be the defining representation  $V(\omega_1) $ in $\mathbb{C}^{2n}$). Thus, we  obtain a $\C$-algebra homomorphism  $ \xi_{n,k}^D:  \Rep^\C_{\omega_1-\poly}(\SO(2k)) \to H^*(\OG(n-k, 2n), \C)$. We determine this homomorphism explicitly in the paper. We further analyze the behavior of $ \xi_{n,k}^D$ when $n$ tends to $\infty$ keeping $k$ fixed and show 
 that $ \xi_{n,k}$ becomes injective in the limit. We also determine explicitly (via some computer calculation) the homomorphism $ \xi^P_\lambda$ 
 for all the exceptional groups $G$  (with a specific `minimal'  choice of $\lambda$)  and  all their maximal parabolic subgroups except $E_8$. 
 
\end{abstract}

\section{Introduction}

This is a follow-up of the papers \cite{Ku2} and  \cite{KR}.

Let $G$ be a connected reductive algebraic group over $\C$ with Borel subgroup $B$ and maximal torus $T\subset B$. Let $P$ be a standard parabolic subgroup with Levi subgroup $L$ containing $T$. Let $W$ (resp. $W_L$) be the Weyl group of $G$ (resp. $L$). Let $V(\lambda)$ be an irreducible almost faithful representation of $G$ with highest weight $\lambda$, i.e.,  $\lambda$ is a dominant integral weight and the corresponding map $\rho_\lambda:G\to \GL(V(\lambda))$ has finite kernel. Then, Springer  defined an adjoint-equivariant regular map with Zariski dense image from the group to its Lie algebra, $\theta_\lambda:G\to \mf[g]$, which depends on $\lambda$ (cf. \cite{BR}). 

Using the Springer morphism  $\theta_\lambda$, Kumar \cite{Ku2} defined the $\lambda$-polynomial representation ring $\rep^\C_{\lambda-\text{poly}}(L)$, which is a subring of the complexified representation ring $\rep^\C(L)$. For $G=\GL(n)$ and $V(\lambda)$ the defining representation $V(\omega_1)=\C^n$, the ring  $\Rep_{\omega_1-\poly}(G):=  \Rep^\C_{\omega_1-\poly}(G)\cap  \Rep (G)$
coincides with the standard  polynomial representation ring $\Rep_{\poly}(G)$ of $\GL(n)$. 

Kumar  \cite{Ku2}
also defined a surjective $\C$-algebra homomorphism 
\begin{equation}\label{xi p morphism}
    \xi^P_\lambda: \rep^\C_{\lambda-\text{poly}}(L)\to H^*(G/P,\C).
\end{equation}
This map generalizes (cf. \cite[Theorem 8]{Ku2}) the classical ring homomorphism 
$$\phi_{n, r}:\rep_{\text{poly}}(\text{GL}(r))\to H^*(\text{Gr}(r,n), \Z),$$
where  $\text{Gr}(r,n)$ is the Grassmannian of $r$-planes in $\C^n$, which is also the quotient $\text{GL}(n)/P_r$  for the maximal parabolic subgroup $P_r$ of $\text{GL}(n)$ obtained from deleting the $r$-th node in the Dynkin diagram of 
$\SL(n)$.

In a subsequent work, Kumar--Rogers \cite{KR} explicitly determined the  homormorphism $\xi^P_{\omega_1}$ for the classical groups $G$ of types $B$ and $C$ and the maximal parabolic subgroups, where $\omega_1$ is the first fundamental weight. Moreover, in these cases, they further analysed the map $\xi^P_{\omega_1}$ in the limit described below:

For type $C_n$, i.e.,  $G=\text{Sp}(2n)$ (the symplectic group) and  any positive integer $0< k<n$, consider the  standard maximal parabolic subgroup $P^C_{n-k}$ corresponding to the $(n-k)$-th node  of the Dynkin diagram of $\text{Sp}(2n)$, and let $L^C_{n-k}$ be its  Levi subgroup. Then, the quotient $\text{Sp}(2n)/P^C_{n-k}$ is  the isotropic Grassmannian $\IG(n-k,2n)$, and we have 
$$L^C_{n-k}\simeq \text{GL}(n-k)\times \text{Sp}(2k).$$
Thus, following \eqref{xi p morphism}, we get a ring homomorphism 
$$\xi^{P^C_{n-k}}_{\omega_1}:\rep^\C_{\omega_1-\text{poly}}(L^C_{n-k})\to H^*(\text{IG}(n-k,2n),\C).$$
Restricting $\xi^{P^C_{n-k}}_{\omega_1}$ to the component $\text{Sp}(2k)$, we get a ring homomorphism 
$$\xi^C_{n,k}:\rep^\C_{\omega_1-\text{poly}}(\text{Sp}(2k))\to H^*(\text{IG}(n-k,2n),\C).$$
Define the stable cohomology ring
$$\mathbb{H}^*(\text{IG}_k,\C):=\varprojlim_nH^*(\text{IG}(n-k,2n),\C)$$
as the inverse limit under a certain embedding of $\IG(n-k, 2n) \hookrightarrow \IG(n+1-k, 2(n+1))$ (cf. \cite[Definition 15]{KR}). Then, the homomorphisms $(\xi^C_{n,k})_{n>k}$ combine to give a ring homomorphism $$\xi^C_k:\rep^\C_{\omega_1-\text{poly}}(\text{Sp}(2k))\to \mathbb{H}^*(\text{IG}_k,\C).$$
Kumar--Rogers proved that the homomorphism $\xi_k^C$ is injective \cite[Theorem 16]{KR} (however it is not surjective). They also obtained parallel results for the classical groups of type $B$.
\vskip1ex

\emph{The aim of this paper is to complete the above results for the even orthogonal groups $\text{SO}(2n)$, and also the exceptional  groups of type $G_2, F_4, E_6$  and $E_7$.  In principle, we can also handle $E_8$, but the results in this case are too long and complicated to reproduce here.}
\vskip1ex

For $n\ge 4$ and $2\le k\le n-1$,  let $\text{OG}(n-k,2n)$ be the set of $(n-k)$-dimensional isotropic subspaces in $V=\C^{2n}$. Then, $\text{OG}(n-k,2n)$ is the quotient $\text{SO}(2n)/P^D_{n-k}$ by the standrad maximal parabolic subgroup $P^D_{n-k}$ corresponding to the $(n-k)$-th node of the Dynkin diagram of type $D_n$. Let $L^D_{n-k}$ be its Levi subgroup. Then,
$$L^D_{n-k}\simeq \GL(n-k)\times \SO(2k).$$
Thus, following \eqref{xi p morphism}, we get a ring homomorphism 
$$\xi^{P^D_{n-k}}_{\omega_1}:\rep^\C_{\omega_1-\text{poly}}(L^D_{n-k})\to H^*(\text{OG}(n-k,2n),\C).$$
Restricting $\xi^{P^D_{n-k}}_{\omega_1}$ to the component $\text{SO}(2k)$, we get a ring homomorphism 
$$\xi^D_{n,k}:\rep^\C_{\omega_1-\text{poly}}(\text{SO}(2k))\to H^*(\text{OG}(n-k,2n),\C).$$
We determine this ring homomorphism explicitly in Theorem \ref{xi image D}.

Define the stable cohomology ring
$$\mathbb{H}^*(\text{OG}_k,\C):=\varprojlim_nH^*(\text{OG}(n-k,2n),\C)$$
as the inverse limit under the embedding $\pi_n: \OG(n-k,2n)\hookrightarrow \OG(n+1-k,2(n+1))$
(cf. the discussion after \eqref{inverse system D}). 
 Then, the homomorphisms $(\xi^D_{n,k})_{n\geq k+2}$ combine to give a ring homomorphism $$\xi^D_k:\rep^\C_{\omega_1-\text{poly}}(\text{SO}(2k))\to \mathbb{H}^*(\text{OG}_k,\C).$$
Following is our one of the  main results of the paper (cf. Theorem \ref{injectivity} and Remark \ref{newremark}).
\begin{thm}
 The above ring homomorphism $\xi^D_k:\rep^\C_{\omega_1-\text{poly}}(\text{SO}(2k))\to \mathbb{H}^*(\text{OG}_k,\C)$ is injective.
 However, it is not surjective.
\end{thm}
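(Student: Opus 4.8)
The plan is to prove injectivity and non-surjectivity separately, following the template already established for types $B$ and $C$ in \cite{KR}, but carefully accounting for the phenomena special to type $D$ (the two families of maximal isotropic subspaces, the distinction between $\SO(2k)$ and $\mathrm{O}(2k)$, and the Euler class $e_k$).

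\textbf{Injectivity.} First I would identify the source ring $\rep^\C_{\omega_1-\poly}(\SO(2k))$ explicitly as a polynomial algebra (tensored with $\C$): by the type-$D$ analysis it should be generated by the elementary-symmetric-type classes $c_1,\dots, c_{2k-1}$ coming from $V(\omega_1)|_{\SO(2k)}$ together with the Euler/Pfaffian class $e_k$ in degree $k$, subject to the single relation expressing $c_{2k}=e_k^2$ (so the even relation $c_i = (-1)^ic_i$ cuts things down to $c_2,c_4,\dots,c_{2k-2}$ and $e_k$ as free generators). Then I would use Theorem \ref{xi image D} to write down, for each fixed $n$, the images $\xi^D_{n,k}(c_{2i})$ and $\xi^D_{n,k}(e_k)$ inside $H^*(\OG(n-k,2n),\C)$ in terms of the Schubert/Chern generators of the flag bundle presentation of the orthogonal Grassmannian, and pass to the inverse limit: in $\mathbb H^*(\OG_k,\C)$ these images stabilize to \emph{algebraically independent} elements. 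The cleanest way to see the algebraic independence in the limit is to exhibit, for every nonzero polynomial $F$ in the source, an $n$ and a cohomology class on $\OG(n-k,2n)$ against which $\xi^D_{n,k}(F)$ pairs nontrivially; equivalently, one shows that the "leading term" of $\xi^D_{n,k}$ — the part that survives stably — is an injective map of graded algebras. Concretely I expect $\xi^D_k$ to factor as an isomorphism onto a polynomial subalgebra $\C[a_2,a_4,\dots,a_{2k-2},\, b]$ of $\mathbb H^*(\OG_k,\C)$, where $b$ is the stable image of the Euler class, and injectivity is then immediate once one checks these $k$ classes are polynomially independent in the stable ring (which one can do by a degree/ Poincaré-series count, or by restricting further to a sub-Grassmannian where the classes are manifestly independent).

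\textbf{Non-surjectivity.} Here the strategy is a dimension/growth comparison, exactly as in the symplectic case. The stable cohomology ring $\mathbb H^*(\OG_k,\C)$ has, in each degree, strictly more than the source ring: $\mathbb H^*(\OG_k,\C)$ is (after the inverse limit) a polynomial ring on generators indexed by \emph{all} the relevant Schubert classes of the stable orthogonal Grassmannian — in particular it contains generators in \emph{odd} degrees as well (coming from the "odd" part of the orthogonal Grassmannian cohomology, e.g.\ classes whose Poincaré polynomial involves $1+t$ factors), whereas the image of $\xi^D_k$ lands in the subalgebra generated by $c_{2i}$ (even degrees) and the single degree-$k$ Euler class. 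Counting graded dimensions — comparing the Hilbert series of $\rep^\C_{\omega_1-\poly}(\SO(2k))$ with that of $\mathbb H^*(\OG_k,\C)$ — shows the image is a proper subalgebra; it suffices to produce one explicit class not in the image, e.g.\ a stable Schubert class in the smallest degree where the Hilbert series disagree, and check it is not a polynomial in the $\xi^D_k$-images (a finite computation in low degree, or an argument that it is not $W_{L}$-invariant of the right shape). Remark \ref{newremark} presumably pins down exactly which classes are missed.

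\textbf{Main obstacle.} The genuine difficulty, and where type $D$ departs from types $B$ and $C$, is bookkeeping the Euler class $e_k$ and the two-component structure of maximal orthogonal Grassmannians under the stabilization maps $\pi_n$. One must verify that the embeddings $\OG(n-k,2n)\hookrightarrow\OG(n+1-k,2(n+1))$ behave well on the relevant classes — that $\pi_n^*$ sends the degree-$k$ Euler class to the Euler class and the Chern classes compatibly — so that the inverse limit $\mathbb H^*(\OG_k,\C)$ is actually the polynomial-type ring one expects and $\xi^D_k$ is well defined with the claimed image; subtleties with signs, with the factor of $2$ relating $c_{2k}$ and $e_k^2$, and with whether one is working with $\SO$ or $\mathrm{O}$ all enter here. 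Once the stable picture is set up correctly, injectivity is a clean algebraic-independence statement and non-surjectivity is a clean Hilbert-series inequality; almost all the work is in Theorem \ref{xi image D} (already available to us) and in the compatibility of the inverse system.
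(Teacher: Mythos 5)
Your overall architecture matches the paper's (set up the inverse system, verify compatibility of the generator images, prove injectivity as an independence statement, and get non-surjectivity from a size comparison), and you correctly flag the compatibility of $\pi_n^*$ with the Euler-class-type data as a real issue that must be checked. However, there is a genuine gap at the heart of the injectivity argument: you reduce everything to showing that the $k$ images $\xi_k(e_1),\dots,\xi_k(e_{k-1})$ and $b=\xi_k(\bar{h}_1\cdots\bar{h}_k)$ are algebraically independent in $\mathbb{H}^*(\OG_k,\C)$, and then propose to verify this by ``a degree/Poincar\'e-series count, or by restricting to a sub-Grassmannian.'' Neither suggestion is a proof. A Hilbert-series count of the ambient ring cannot certify that \emph{specific} elements are algebraically independent, and no sub-Grassmannian is identified. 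The difficulty is concrete: by \eqref{xi k gen} the images are $\xi_k(e_i)=c_i^2+2\sum_{j=1}^i(-1)^jc_{i+j}c_{i-j}$, whose cross terms involve the classes $c_{i+j}$ up to degree $4k$, and these satisfy the quadratic relations of Proposition \ref{stable cohomology presentation}, so independence over $\C$ is not at all manifest. The paper's device is to work integrally and reduce mod $2$: the cross terms carry a coefficient $2$ and vanish, leaving $e_i\mapsto c_i^2$ with $c_1,\dots,c_k$ visibly polynomially independent in $\Z_2\otimes\bar{\mathbb{H}}^*(\OG_k,\Z)$; freeness of the graded pieces then lifts injectivity back to $\Z$ and hence to $\C$. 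This trick is invisible (indeed unavailable) if one insists on working over $\C$ throughout, as your proposal does.

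The second gap is the treatment of the Euler class. You propose to include $b=\xi_k(\bar{h}_1\cdots\bar{h}_k)$ among the generators whose independence is to be checked, but Theorem \ref{xi image D} only computes the images of the $e_i$; the class $b$ is never identified explicitly, and since $b^2=\xi_k(e_k)$ any direct independence argument would have to control $b$ itself. The paper sidesteps this entirely with a squaring argument: writing a general element as $f+\bar{h}_{1,k}g$ with $f,g\in\Z[e_1,\dots,e_k]$, the vanishing of its image forces $\xi_k(f^2)=\xi_k(e_kg^2)$, hence $f^2=e_kg^2$ by injectivity on the even part, which is impossible for $g\neq 0$ by comparing the parity of the degree in $e_k$. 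Your proposal has no substitute for this step. Finally, for non-surjectivity your Hilbert-series comparison would work, but the paper's Remark \ref{newremark} is a one-liner (finitely generated source versus non-finitely-generated target), and your remark about odd-degree generators is incorrect: $H^*(\OG(n-k,2n),\Z)$ is concentrated in even degrees, so there is no ``odd part'' to exploit.
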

For  the exceptional groups of type $G_2, F_4, E_6$  and $E_7$,
 we  calculate the Springer morphism $\theta_\lambda$ restricted to $T$ by using \cite[Theorems 1, 2]{R}, where we take $\lambda$ such that $V(\lambda)$ has the minimum Dynkin index. Further, for these exceptional groups and any maximal parabolic subgroup $P$ we determine {\it explicitly} the generators of 
  $\rep^\C_{\lambda-\poly}(L)$ and their images in $H^*(G/P, \C)$ under $\xi^P_\lambda$ in the Schubert basis (cf. Sections 5-8), using a computer program \cite{X}, which is developed based on the algorithms in \cite{blee} and \cite{D}.

The proofs in the case of $\SO(2n)$ rely on some results of Buch-Kresch-Tamvakis  \cite{BKT1},  work of Kumar \cite{Ku2} and work of  Kumar-Rogers \cite{KR}.    
  
\vskip2ex
\noindent
{\bf Acknowledgements:} The first author was partially supported by the NSF grant DMS-1802328. We thank Leonardo Mihalcea, Alex Molev, Sean Rogers and Harry Tamvakis for some helpful correspondences.

\section{Prelimanaries and Notation}

We recall some notation and results from [Ku2]. 

Let $G$ be a connected reductive group  over $\mathbb{C}$ with a Borel subgroup $B$ and maximal torus $T\subset B$. Let $P$ be a standard parabolic subgroup with the Levi subgroup $L$ containing $T$. We denote their Lie algebras by the corresponding Gothic characters: $\mathfrak{g}, \mathfrak{b}, \mathfrak{t},\mathfrak{p}, \mathfrak{l}$
respectively. We denote by $\Delta=\{\alpha_1, \dots, \alpha_\ell\}\subset \mathfrak{t}^*$ the set of simple roots. The fundamental weights of $\mathfrak{g}$ are denoted by  $\{\omega_1, \dots, \omega_\ell\}\subset \mathfrak{t}^*$. Let $W$ (resp. $W_L$) be the Weyl group of $G$ (resp. $L$).
Then, $W$ is generated by the simple reflections $\{s_i\}_{1\leq i \leq \ell}$. Let $W^P$ denote the set of smallest coset representatives in the cosets in $W/W_L$. {\it Throughout the paper we follow the indexing convention as in [Bo, Planche
I - IX].} 

Let $X(T)$ be the group of characters of $T$ and let $D\subset X(T)$ be the set of dominant characters  (with respect to the given choice of $B$ and hence positive roots, which are the roots of $\mathfrak{b}$). Then, the isomorphism classes of  finite dimensional irreducible representations of $G$ are bijectively parameterized by $D$ under the correspondence $\lambda \in D \leadsto V(\lambda)$, where
$V(\lambda)$ is the irreducible representation of $G$ with highest weight $\lambda$. We call $V(\lambda)$ {\it almost faithful} if the corresponding map $\rho_\lambda: G \to \Aut (V(\lambda))$ has finite kernel. 

Recall the 
Bruhat decomposition for the flag variety:
$$G/P=\sqcup_{w\in W^P}\, \Lambda_w^P,\,\,\,\text{where}\,\, \Lambda^P_w:= BwP/P.$$
Let $\bar{\Lambda}_w^P$ denote the closure of $\Lambda_w^P$ in $G/P$.  We denote by 
$[\bar{\Lambda}_w^P] \in H_{2 \ell(w)}(G/P, \mathbb{Z})$ its
fundamental class. Let $\{\epsilon^P_w\}_{w\in W^P}$ denote the Kronecker dual basis of the cohomology, i.e., 
$$\epsilon^P_w([\bar{\Lambda}_v^P])= \delta_{w,v}, \,\,\,\text{for any}\,\, v,w\in W^P.$$
Thus, $\epsilon^P_w$ belongs to the singular cohomology:
$$\epsilon^P_w\in H^{2 \ell(w)}(G/P, \mathbb{Z}).$$
We abbreviate $\epsilon^B_w$ by $\epsilon_w$. Then, for any $w\in W^P, \epsilon^P_w=\pi^*(\epsilon_w)$, where $\pi:G/B\to G/P$ is the standard projection.

\begin{definition}\label{def1}
Let $V(\lambda)$ be any almost faithful irreducible representation of $G$. Following Springer (cf. [BR, $\S$9]), define the map
$$
\theta_{\lambda}:G\to \mathfrak{g}\quad \text{(depending upon $\lambda$)}
$$
as follows:
\[
\xymatrix{
G\ar[r]^-{\rho_{\lambda}}\ar[dr]^{\theta_{\lambda}} & \Aut (V(\lambda))\subset \End (V(\lambda))=\mathfrak{g}\oplus \mathfrak{g}^{\perp}\ar[d]^{\pi}\\
 & \mathfrak{g}
}
\]
where  $\mathfrak{g}$ sits canonically inside $\End(V(\lambda))$ via the derivative $d\rho_{\lambda}$, the orthogonal complement $\mathfrak{g}^{\perp}$
is  taken with respect to the standard conjugate $ \Aut(V(\lambda))$-invariant form on $\End (V(\lambda))$: $\langle A, B\rangle :=\tr (AB)$,  and $\pi$ is the projection to the $\mathfrak{g}$-factor. (By considering a compact form $K$ of $G$, it is easy to see that  $ \mathfrak{g} \cap \mathfrak{g}^{\perp} =\{0\}$.)

Since $\pi\circ d\rho_{\lambda}$ is the identity map, $\theta_{\lambda}$ is a local diffeomorphism at $1$ (and hence with Zariski dense image). Of course, by construction, $\theta_{\lambda}$ is an algebraic morphism. Moreover, since the decomposition $ \End (V(\lambda))=\mathfrak{g}\oplus \mathfrak{g}^{\perp}$ is $G$-stable, it is easy to see that $\theta_\lambda$ is $G$-equivariant under conjugation.
\end{definition}
We recall the following lemma from [Ku2, Lemma 2].
\begin{lemma} \label{lemma1} The above morphism restricts to ${\theta_\lambda}_{|T}: T \to \mathfrak{t}$.
\end{lemma}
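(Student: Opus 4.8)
The plan is to deduce the statement quickly from the conjugation-equivariance of $\theta_\lambda$ recorded in Definition \ref{def1}, together with the fact that a maximal torus of a connected reductive group is its own centralizer. First I would fix $t\in T$ and observe that, since $T$ is abelian, $sts^{-1}=t$ for every $s\in T$; hence the $G$-equivariance of $\theta_\lambda$ gives $\Ad(s)\bigl(\theta_\lambda(t)\bigr)=\theta_\lambda(sts^{-1})=\theta_\lambda(t)$ for all $s\in T$, so $\theta_\lambda(t)$ lies in the $\Ad(T)$-fixed subspace $\mathfrak{g}^{\Ad(T)}$ of $\mathfrak{g}$. Next I would identify this subspace: from the root-space decomposition $\mathfrak{g}=\mathfrak{t}\oplus\bigoplus_\alpha\mathfrak{g}_\alpha$ (the sum over all roots $\alpha$), on which $\Ad(s)$ acts trivially on $\mathfrak{t}$ and by the scalar $\alpha(s)$ on $\mathfrak{g}_\alpha$, and since each root is a nonzero character of $T$, no nonzero vector of any $\mathfrak{g}_\alpha$ can be $\Ad(T)$-fixed; thus $\mathfrak{g}^{\Ad(T)}=\mathfrak{t}$ (equivalently, it is the centralizer of $\mathfrak{t}$ in $\mathfrak{g}$, which is $\mathfrak{t}$ because $T$ is maximal). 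Combining the two observations, $\theta_\lambda(t)\in\mathfrak{t}$ for every $t\in T$, which is the assertion.

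An alternative route, more in the spirit of the explicit computations needed later in the paper, is to argue directly from the construction of $\theta_\lambda$. For $t\in T$ the operator $\rho_\lambda(t)$ preserves the weight decomposition $V(\lambda)=\bigoplus_\mu V(\lambda)_\mu$, acting on $V(\lambda)_\mu$ by the scalar $\mu(t)$; hence $\rho_\lambda(t)$ belongs to the commutative subalgebra $A\subset\End(V(\lambda))$ of operators that are scalar on each weight space. One then checks three routine points: $A\cap\mathfrak{g}=\mathfrak{t}$ (a nonzero element of a root space $\mathfrak{g}_\alpha$ with $\alpha\ne0$ genuinely shifts weights, using that $d\rho_\lambda$ is injective since $\rho_\lambda$ has finite kernel); the trace form $\langle\cdot,\cdot\rangle$ is nondegenerate on $A$; and $\langle a,z_\alpha\rangle=\tr(a\,z_\alpha)=0$ for every $a\in A$ and $z_\alpha\in\mathfrak{g}_\alpha$ with $\alpha\ne0$, so the orthogonal complement $\mathfrak{t}^{\perp_A}$ of $\mathfrak{t}$ inside $A$ is contained in $\mathfrak{g}^{\perp}$. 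It then follows that the projection $\pi$ restricted to $A$ is simply the projection $A=\mathfrak{t}\oplus\mathfrak{t}^{\perp_A}\to\mathfrak{t}$, whence $\theta_\lambda(t)=\pi\bigl(\rho_\lambda(t)\bigr)\in\mathfrak{t}$.

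Either way there is no serious obstacle here: the content is entirely the $G$-equivariance of $\theta_\lambda$ (already established in Definition \ref{def1}) and the standard identification $\mathfrak{g}^{\Ad(T)}=\mathfrak{t}$; the only point requiring a moment's care is to invoke almost faithfulness of $V(\lambda)$ exactly where injectivity of $d\rho_\lambda$ is used.
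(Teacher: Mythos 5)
Your proof is correct; note, though, that this paper does not actually prove the lemma --- it is quoted from [Ku2, Lemma 2] --- so there is no in-paper argument to compare against, and your write-up is a genuine addition. Your first route (conjugation-equivariance of $\theta_\lambda$, which is recorded at the end of Definition \ref{def1}, combined with $\mathfrak{g}^{\Ad(T)}=\mathfrak{t}$ via the root-space decomposition) is complete as stated and is the most economical: it uses nothing about the construction of $\theta_\lambda$ beyond its equivariance. Your second route, through the weight decomposition of $V(\lambda)$, is closer in spirit to how ${\theta_\lambda}_{|T}$ is actually computed later in the paper (e.g.\ \eqref{theta t} and the formulas of Sections 5--8), and it is also essentially correct; the one step you should make explicit is why $A=\mathfrak{t}\oplus\mathfrak{t}^{\perp_A}$. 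Nondegeneracy of the trace form on $A$ gives $\dim\mathfrak{t}+\dim\mathfrak{t}^{\perp_A}=\dim A$, but not by itself $\mathfrak{t}\cap\mathfrak{t}^{\perp_A}=\{0\}$; the latter follows either from nondegeneracy of the trace form on $\mathfrak{t}$ (the weights of the almost faithful module $V(\lambda)$ span $\mathfrak{t}^*$), or more quickly from your own third bullet: since $\langle a,h\rangle=0$ for $a\in\mathfrak{t}^{\perp_A}$, $h\in\mathfrak{t}$, and $\langle a,z_\alpha\rangle=0$ for all root vectors, one gets $\mathfrak{t}^{\perp_A}\subseteq\mathfrak{g}^{\perp}$, whence $\mathfrak{t}\cap\mathfrak{t}^{\perp_A}\subseteq\mathfrak{g}\cap\mathfrak{g}^{\perp}=\{0\}$ as noted in Definition \ref{def1}. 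With that line supplied, $\pi_{|A}$ is indeed the projection onto $\mathfrak{t}$ and the lemma follows; your closing remark about where almost faithfulness enters (injectivity of $d\rho_\lambda$, used to see that nonzero root vectors genuinely shift weights) is exactly right.
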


 For any $\mu \in X(T)$, we have a  $G$-equivariant
 line bundle $\cl (\mu)$  on $G/B$ associated to the principal $B$-bundle $G\to G/B$
via the one dimensional $B$-module $\mu^{-1}$. (Any  $\mu \in X(T)$ extends
 uniquely to a character of $B$.) The one dimensional $B$-module $\mu$ is also denoted by
 $\mathbb{C}_\mu$. Recall the surjective  Borel homomorphism
 $$\beta : S(\mathfrak{t}^*) \to H^*(G/B, \mathbb{C}),$$
which takes a character $ \mu \in X(T)$ to the first Chern class of the  line bundle $\cl(\mu)$. (We realize 
$X(T)$ as a lattice in $\mathfrak{t}^*$ via taking derivative.) We then extend this map linearly over $\mathbb{C}$ to $\mathfrak{t}^*$ and extend further as a graded algebra homomorphism from $ S(\mathfrak{t}^*)$ (doubling the degree). Under the Borel homomorphism,
\begin{equation}\label{eqnborel} \beta(\omega_i)=\epsilon_{s_i},\,\,\,\text{for any fundamental weight}\,\, \omega_i.
\end{equation}

Fix a compact form $K$ of $G$ such that $T_o:=K\cap T$ is a (compact) maximal torus of $K$. Then, $W\simeq N(T_o)/T_o$, where $N(T_o)$ is the normalizer of $T_o$ in $K$. Recall that $\beta$ is $W$-equivariant under  the standard action of $W$ on  $S(\mathfrak{t}^*)$ and the $W$-action on $H^*(G/B, \mathbb{C})$ induced from the $W$-action on $G/B\simeq K/T_o$ via 
$$(nT_o)\cdot (kT_o):= kn^{-1}T_o,\,\,\,\text{for}\,\, n\in N(T_o)\,\,\,\text{and}\,\,  k\in K.$$
Thus, for any standard parabolic subgroup $P$ with the Levi subgroup $L$ containing $T$, restricting $\beta$, we get a surjective graded algebra homorphism:
$$\beta^P : S(\mathfrak{t}^*)^{W_L} \to H^*(G/B, \mathbb{C})^{W_L}\simeq  H^*(G/P, \mathbb{C}),$$
where the last isomorphism, which is induced from the projection $G/B \to G/P$,  can be found, e.g.,  in [Ku1, Corollary 11.3.14]. 

Now, the Springer morphism ${\theta_\lambda}_{|T}:T\to \mathfrak{t}$ (restricted to $T$) gives rise to the corresponding $W$-equivariant injective  algebra homomorphism 
on the affine coordinate rings:
$$ ({\theta_\lambda}_{|T})^*: \mathbb{C}[\mathfrak{t}]=S(\mathfrak{t}^*)\to  \mathbb{C}[T].$$
Thus,  on restriction to $W_L$-invariants, we get an injective algebra homomorphism 
$$ {\theta_\lambda (P)}^*: \mathbb{C}[\mathfrak{t}]^{W_L}=S(\mathfrak{t}^*)^{W_L}\to  \mathbb{C}[T]^{W_L}.$$
(Since $W_L$-invariants depend upon the choice of the parabolic subgroup $P$, we have included $P$ in the notation of $ {\theta_\lambda (P)}^*$.) Now, let $\Rep (L)$ be the representation ring of $L$ and let  $\Rep^\mathbb{C} (L):= \Rep (L)\otimes_{\mathbb{Z}}\,\mathbb{C}$ be its complexification.  Then, as it is well known,   
\begin{equation} \label{eqn1}\Rep^\mathbb{C} (L)\simeq  \mathbb{C}[T]^{W_L}
\end{equation}
obtained from taking the character of an $L$-module restricted to $T$. 

\begin{definition} \label{maindefi} We  call a virtual character $\chi\in  \Rep^\mathbb{C} (L)$ of $L$ a {\it $\lambda$-polynomial character} if the corresponding function in 
$ \mathbb{C}[T]^{W_L}$ is in the image of $ {\theta_\lambda (P)}^*$. The set of all $\lambda$-polynomial characters of $L$, which is, by definition,  a subalgebra of $\Rep^\mathbb{C} (L)$ isomorphic to the algebra $S(\mathfrak{t}^*)^{W_L}$, is denoted by $\Rep^\mathbb{C}_{\lambda-\poly} (L)$. Of course, the map $ {\theta_\lambda (P)}^*$ induces an algebra isomorphism (still denoted by)
$$ {\theta_\lambda (P)}^*: S(\mathfrak{t}^*)^{W_L}\simeq \Rep^\mathbb{C}_{\lambda-\poly} (L),$$
under the identification \eqref{eqn1}.
\end{definition}

It is easy to see that 
\begin{equation} \label{eqn1'} \Rep_{\omega_1-\poly}(\GL (n))= \Rep_{\poly}(\GL (n)),
\end{equation}
where  $ \Rep_{\poly}(\GL (n))$ denotes the subring of the representation ring $\Rep(\GL (n))$ spanned by the irreducible polynomial representations of $\GL (n)$. 

We recall  the following  result from [Ku2, Theorem 5].

\begin{theorem} \label{thmmain}  Let $V(\lambda)$ be an almost faithful irreducible $G$-module and let $P$ be any standard parabolic subgroup. Then,  
the above maps (specifically $\beta^P\circ ({\theta_\lambda (P)}^*)^{-1} $) give
 rise to  a surjective $\mathbb{C}$-algebra homomorphism 
$$\xi_\lambda^P: \Rep^\mathbb{C}_{\lambda-\poly}(L)  \to H^*(G/P, \C).$$

Moreover, let $Q$ be another standard  parabolic subgroup with Levi subgroup $R$ containing $T$ such that $P\subset Q$ (and hence 
$L\subset R$). Then, we have the following commutative diagram:\[
\xymatrix{
\Rep^\mathbb{C}_{\lambda-\poly}(R)  \ar[d]^{\gamma}\ar[r]^{\xi_\lambda^Q} & H^*(G/Q, \C)
\ar[d]^{\pi^*}\\
\Rep^\mathbb{C}_{\lambda-\poly}(L)\ar[r]^{\xi_\lambda^P} &H^*(G/P, \C),
}
\]
where $\pi^*$ is induced from the standard projection $\pi:G/P \to G/Q$ and $\gamma$ is induced from the restriction of representations. 
\end{theorem}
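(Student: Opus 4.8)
The plan is to reduce every arrow appearing in the statement to a restriction of one of the two ``global'' homomorphisms already in hand: the Borel homomorphism $\beta\colon S(\mathfrak{t}^*)\to H^*(G/B,\mathbb{C})$ and the Springer comorphism $({\theta_\lambda}_{|T})^*\colon S(\mathfrak{t}^*)\to\mathbb{C}[T]$. Surjectivity is then immediate. By definition $\xi^P_\lambda=\beta^P\circ({\theta_\lambda(P)}^*)^{-1}$, where by Definition \ref{maindefi} the map ${\theta_\lambda(P)}^*\colon S(\mathfrak{t}^*)^{W_L}\to\Rep^\mathbb{C}_{\lambda-\poly}(L)$ is a $\mathbb{C}$-algebra isomorphism, so its inverse is one as well, while $\beta^P\colon S(\mathfrak{t}^*)^{W_L}\to H^*(G/P,\mathbb{C})$ is a surjective graded $\mathbb{C}$-algebra homomorphism as recalled just before the theorem (using $H^*(G/B,\mathbb{C})^{W_L}\simeq H^*(G/P,\mathbb{C})$). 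The composite of an isomorphism with a surjection is a surjection, so $\xi^P_\lambda$ is a well-defined surjective $\mathbb{C}$-algebra homomorphism.

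For the commutativity I would first record the two identifications on which everything rests. Since $P\subset Q$ we have $L\subset R$ and $W_L\subset W_R$, hence $S(\mathfrak{t}^*)^{W_R}\subset S(\mathfrak{t}^*)^{W_L}$, $\mathbb{C}[T]^{W_R}\subset\mathbb{C}[T]^{W_L}$ and $H^*(G/B,\mathbb{C})^{W_R}\subset H^*(G/B,\mathbb{C})^{W_L}$. Both ${\theta_\lambda(P)}^*$ and ${\theta_\lambda(Q)}^*$ are the single map $({\theta_\lambda}_{|T})^*$ restricted to these two invariant subrings, and likewise $\beta^P,\beta^Q$ are restrictions of $\beta$. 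Under the character identification \eqref{eqn1}, the restriction of representations $\gamma$ corresponds to the inclusion $\mathbb{C}[T]^{W_R}\hookrightarrow\mathbb{C}[T]^{W_L}$, because the character of an $R$-module evaluated on $T$ is unchanged when the module is first restricted to $L$; in particular $\gamma$ sends $\Rep^\mathbb{C}_{\lambda-\poly}(R)$ into $\Rep^\mathbb{C}_{\lambda-\poly}(L)$, so the left vertical arrow is well defined. Dually, functoriality of the projections $G/B\to G/P\to G/Q$ together with the isomorphisms $H^*(G/P,\mathbb{C})\simeq H^*(G/B,\mathbb{C})^{W_L}$ and $H^*(G/Q,\mathbb{C})\simeq H^*(G/B,\mathbb{C})^{W_R}$ of [Ku1, Corollary 11.3.14] identifies $\pi^*$ with the inclusion $H^*(G/B,\mathbb{C})^{W_R}\hookrightarrow H^*(G/B,\mathbb{C})^{W_L}$.

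Granting these identifications, the square becomes a diagram of restrictions of the fixed maps $\beta$ and $({\theta_\lambda}_{|T})^*$, so the check is a one-line chase. Given $\chi\in\Rep^\mathbb{C}_{\lambda-\poly}(R)$, write $\chi={\theta_\lambda(Q)}^*(g)$ with $g\in S(\mathfrak{t}^*)^{W_R}$. Going down-then-right yields $\xi^P_\lambda(\gamma(\chi))=\beta^P(g)$, using ${\theta_\lambda(P)}^*|_{S(\mathfrak{t}^*)^{W_R}}={\theta_\lambda(Q)}^*$, while going right-then-down yields $\pi^*(\xi^Q_\lambda(\chi))=\pi^*(\beta^Q(g))=\beta^P(g)$, since $\beta^Q(g)=\beta(g)$ and $\pi^*$ is the above inclusion. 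The only genuine content, and the step I expect to demand the most care, is verifying these two identifications of the vertical arrows, namely that $\gamma$ is the inclusion of invariant function-rings under \eqref{eqn1} and that $\pi^*$ is the inclusion of invariant cohomology under the isomorphism of [Ku1]; once both compatibilities are established, the commutativity is purely formal.
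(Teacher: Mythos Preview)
The paper does not prove this theorem; it is simply recalled from [Ku2, Theorem~5] without argument. Your proof is correct and is the natural one: once $\gamma$ and $\pi^*$ are identified with the inclusions $\mathbb{C}[T]^{W_R}\hookrightarrow\mathbb{C}[T]^{W_L}$ and $H^*(G/B,\mathbb{C})^{W_R}\hookrightarrow H^*(G/B,\mathbb{C})^{W_L}$ respectively, both horizontal arrows are restrictions of the single map $\beta\circ((\theta_\lambda|_T)^*)^{-1}$, and the square commutes formally.
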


\section{Injectivity Result for the even orthogonal Groups}

{\it In this section, we take $n\geq 4$} to avoid some trivial cases.
\vskip1ex

Let $V=\C^{2n}$ be equipped with the non-degenerate quadratic form $\langle\,,\,\rangle$ so that its matrix $(\langle e_i,e_j\rangle )_{1\leq i, j\leq 2n}$ in the standard basis $\{e_1, \dots, e_{2n}\}$ is given by the antidiagonal matrix $E_D$ with all its antidiagonal entries equal to $1$. Then,
 $$\SO(2n)=\{g\in \SL (2n):(g^t)^{-1}=E_DgE_D^{-1}\}$$
 is the special orthogonal group. Now, as in \cite[Lemma 10]{Ku2}, 
the Springer morphism $\theta = \theta_{\omega_1}$ (with respect to the first fundamental weight $\omega_1$) for $G= \SO(2n)$ is given by the Cayley Transform: 
$$g\to \frac{g-E_D^{-1}g^tE_D}{2},\,\,\,\text{ for $g\in G$}.$$
Take the maximal torus in $\SO(2n)$ defined by
\begin{equation}\label{tori}
    T_D=\{\mathbf{t}=\diag(t_1,\ldots,t_n,t_n^{-1},\ldots,t_1^{-1}):t_i\in \C^*\}.
\end{equation}
Its Lie algebra is given by
\begin{equation}\label{tori algebra}
    \mf[t]_D=\{\overline{\mathbf{t}}=\diag(x_1,\ldots,x_n,-x_n,\ldots,-x_1):x_i\in \C\}.
\end{equation}
Restrict the Springer morphism $\theta$ to the torus to get
\begin{equation}\label{theta t}
\theta(\mathbf{t})=\diag(\overline{t}_1,\ldots,\overline{t}_n,-\overline{t}_n,\ldots,-\overline{t}_1),\,\,\,\text{
where $\overline{t}_i :=\frac{t_i-t_i^{-1}}{2}$}.
\end{equation}
The Weyl group $W_G$ of $G$ is a subgroup of $S_{2n}$ consisting of permutations $(a_1\cdots a_{2n})$ satisfying $a_i+a_{2n+1-i}=2n+1$ for $1\le i\le 2n$, and the cardinality of the set $\{i\mid 1\le i\le n,\ a_i>n\}$ is even (cf. \cite[$\S$3.5]{BL}).
Recall the following result from \cite[Proposition 12]{Ku2}.
\begin{prop}\label{rep ring D}
 Let $f:T\to \C$ be a regular map. Then, $f\in \Rep_{\poly}^\C(G) = \Rep_{\omega_1-\poly}^\C(G) 
 $ if and only if there exist symmetric polynomials $P_f(x_1,\ldots,x_n)$ and $Q_f(x_1,\ldots,x_n)$ satisfying
 \begin{equation}
     f(\mathbf{t})=P_f(\overline{t}_1^2,\ldots,\overline{t}_n^2)+\left(\overline{t}_1\overline{t}_2 \cdots \overline{t}_n\right)Q_f(\overline{t}^2_1,\ldots,\overline{t}^2_n)
 \end{equation}
\end{prop}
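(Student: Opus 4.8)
The plan is to unwind Definition~\ref{maindefi} in this situation and reduce the statement to the classical description of the ring of $W_G$-invariant polynomials on $\mathfrak{t}_D$. Taking the parabolic to be $G$ itself, so that $W_L=W_G$, Definition~\ref{maindefi} identifies $\Rep^\C_{\omega_1-\poly}(G)$ with $(\theta|_T)^*\!\bigl(S(\mathfrak{t}_D^*)^{W_G}\bigr)\subset\C[T]$, the image of the injective pullback map on $W_G$-invariants. In the coordinates $x_1,\dots,x_n$ on $\mathfrak{t}_D$ coming from \eqref{tori algebra}, formula \eqref{theta t} shows that $(\theta|_T)^*$ is simply the substitution $x_i\mapsto\overline{t}_i=(t_i-t_i^{-1})/2$. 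Hence it suffices to prove the purely algebraic identity
$$S(\mathfrak{t}_D^*)^{W_G}=\{\,P(x_1^2,\dots,x_n^2)+(x_1x_2\cdots x_n)\,Q(x_1^2,\dots,x_n^2)\ :\ P,\,Q\ \text{symmetric}\,\}.$$
Indeed, applying $x_i\mapsto\overline{t}_i$ to an element $g$ of the right-hand side produces exactly the asserted expression for $f=(\theta|_T)^*(g)$, while conversely any $f$ in the image equals $(\theta|_T)^*(g)$ for some $g$ in the left-hand side and is therefore of the asserted form; so the two implications of the proposition follow at once.

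For the displayed identity I would argue by comparing $W_G\cong W_{D_n}$ with the hyperoctahedral group $W_{B_n}$, in which it sits as an index-two normal subgroup. The inclusion ``$\supseteq$'' is immediate: each elementary symmetric polynomial in $x_1^2,\dots,x_n^2$ and the product $x_1\cdots x_n$ is fixed both by permutations of the $x_i$ and by sign changes flipping an even number of coordinates, and these generate $W_{D_n}$. For ``$\subseteq$'', fix $g\in S(\mathfrak{t}_D^*)^{W_G}$ and let $\sigma\in W_{B_n}\setminus W_{D_n}$ be the reflection $x_1\mapsto-x_1$. Normality of $W_{D_n}$ in $W_{B_n}$, together with $W_{D_n}$-invariance of $g$, gives: (a) $g+\sigma(g)$ is $W_{B_n}$-invariant, hence equals $P(x_1^2,\dots,x_n^2)$ for a symmetric polynomial $P$; and (b) $g-\sigma(g)$ is anti-invariant under each individual reflection $x_i\mapsto-x_i$ (compose $\sigma$ with the even sign change on $\{x_1,x_i\}$), hence is divisible by $x_1\cdots x_n$, and the quotient is $W_{B_n}$-invariant, hence of the form $Q(x_1^2,\dots,x_n^2)$ with $Q$ symmetric. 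Then $2g=P(x_1^2,\dots,x_n^2)+(x_1\cdots x_n)\,Q(x_1^2,\dots,x_n^2)$, which after absorbing $\tfrac12$ has the required shape.

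The only non-formal ingredient is this invariant-theoretic description; everything else is bookkeeping through Definition~\ref{maindefi} and the explicit form \eqref{theta t} of the Springer morphism on the torus, and I do not expect any genuine obstacle. As a consistency check one can instead invoke Chevalley's theorem together with the known degrees $2,4,\dots,2(n-1),n$ of the basic invariants of $W_{D_n}$: the first $n-1$ elementary symmetric polynomials in $x_1^2,\dots,x_n^2$ (of degrees $2,\dots,2(n-1)$) and $x_1\cdots x_n$ (of degree $n$) are algebraically independent and therefore freely generate $S(\mathfrak{t}_D^*)^{W_G}$, yielding the same description. Finally, one notes that the decomposition of $g$, equivalently the pair $(P_f,Q_f)$, is unique, since a polynomial that is even in every $x_i$ and one that is odd in every $x_i$ can agree only if both vanish; this uniqueness is not needed for the statement but is convenient for the later determination of $\xi^{P^D_{n-k}}_{\omega_1}$.
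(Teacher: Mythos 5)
Your argument is correct. Note that the paper itself supplies no proof of this proposition: it is recalled verbatim from [Ku2, Proposition 12], so there is nothing internal to compare against. Your blind reconstruction follows exactly the route one would expect the cited proof to take: for $P=G$ Definition \ref{maindefi} identifies $\Rep^{\C}_{\omega_1-\poly}(G)$ with $({\theta|_T})^*\bigl(S(\mathfrak{t}_D^*)^{W_G}\bigr)$, the pullback is the substitution $x_i\mapsto\overline{t}_i$ by \eqref{theta t}, and everything reduces to the classical decomposition $\C[x_1,\dots,x_n]^{W(D_n)}=\C_{\sym}[x_1^2,\dots,x_n^2]\oplus (x_1\cdots x_n)\,\C_{\sym}[x_1^2,\dots,x_n^2]$. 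Your derivation of that decomposition via the index-two inclusion $W(D_n)\lhd W(B_n)$ is sound: the symmetrization $g+\sigma(g)$ and antisymmetrization $g-\sigma(g)$ are handled correctly (normality gives that $\sigma(g)$ is again $W(D_n)$-invariant, anti-invariance under each single sign flip gives divisibility by $x_1\cdots x_n$, and the quotient is $W(B_n)$-invariant), and the Chevalley/degree count $2,4,\dots,2n-2,n$ you offer as a cross-check is consistent. The one direction worth stating explicitly, which you do handle implicitly, is that a regular $f:T\to\C$ of the displayed form is automatically $W_G$-invariant (hence genuinely lies in $\Rep^{\C}(G)\simeq\C[T]^{W_G}$), which follows from the $W$-equivariance of $\theta|_T$. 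No gaps.
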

where $\mathbf{t}$ is from \eqref{tori} and $\overline{t}_i$ from \eqref{theta t}.\\

\begin{definition}
 For $1\le r\le n-2$, we let $\OG=\OG(r,2n)$ to be the set of $r$-dimensional isotropic subspaces in $V=\C^{2n}$. We take $B_D := B\cap \SO(2n)$, where $B$ is the Borel subgroup of $\SL(2n)$ consisting of upper triangular matrices (of determinant $1$).
Then, $\OG(r,2n)$ is the quotient $\SO(2n)/P_r^D$ by the standard maximal parabolic subgroup $P_r^D$ with $\Delta\setminus\{\alpha_r\}$ as the set of simple roots of its Levi component $L_r^D$. Here $L_r^D$ is the unique Levi subgroup of $P_r^D$ containing $T_D$. We have
\begin{equation}\label{levi D}
    L_r^D\simeq \GL(r)\times \SO(2(n-r)).
\end{equation}
By \cite[$\S$9]{Ku2},
\begin{equation}\label{rep ring D iso}
    \Rep_{\poly}^\C(L_r^D)\simeq \C_{\sym}[\overline{t}_1,\ldots,\overline{t}_r] \otimes_\C  \left( R\oplus \overline{t}_{r+1}\cdots \overline{t}_n R \right), \,\,\text{ where $R=\C_{\sym}[\overline{t}_{r+1}^2,\ldots,\overline{t}_n^2]$}.\end{equation}
Since the case of $r=n-1$ is parallel to $r=n$ due to the diagram automorphism of $D_n$  taking the $n$-th node  to the $(n-1)$-th node, we only consider $r=n$ here. We have $\SO(2n)/P_n^D$ identified with a connected component $\OG(n,2n)_+$ of the set of $n$-dimensional isotropic subspaces of $V$. Moreover, its Levi subgroup is given by
\begin{equation}\label{levi D n}
    L_n^D\simeq \GL(n).
\end{equation}
In this case, by \cite[$\S$9]{Ku2},
\begin{equation}\label{rep ring D n iso}
    \Rep_{\poly}^\C(L_n^D)\simeq \C_{\sym}[\overline{t}_1,\ldots,\overline{t}_n].
\end{equation}
\end{definition}

Now,  we describe the geometry of $\OG(r,2n)$ following \cite{BKT1}. Two subspaces $E$ and $F$ of $V$ are said to be in the {\it same family} if $\dim(E\cap F)\equiv n (\text{mod } 2)$. Fix once and for all an isotropic subspace $L_n$ of $V$ of dimension $n$. An isotropic flag is a complete flag $F_\bullet$ consisting of subspaces $F_i$ of $V$, with $0=F_0\subsetneq F_1 \subsetneq F_2 \subsetneq\cdots \subsetneq F_{2n}=V$, such that $F_{n+i}=F_{n-i}^\perp$ for all $0\le i\le n-1$, and with $F_n$ and $L_n$ in the same family. As the orthogonal space $F_{n-1}^\perp/F_{n-1}$ contains only two isotropic lines, to each such flag $F_\bullet$ there corresponds an alternate isotropic flag $\Tilde{F}_\bullet$ such that $\Tilde{F}_i= F_i$ for all $i<n$ but with $\Tilde{F}_n$ in the opposite family from $F_n$.

Let $\Fl=\Fl_n=\SO(2n)/B_D$ be the full flag variety consisting of the isotropic flags. Then, the flags $F_\bullet$ give rise to a sequence of tautological vector bundles over $\Fl$: $$\mc[F]_1\subset \mc[F]_2\subset\cdots \subset \mc[F]_n\subset \mc[E]\text{, with }rank(\mc[F]_i)=i,$$ where $\mc[E]:\Fl\times \C^{2n}\to\Fl$ is the trivial rank-$2n$ vector bundle.\\

Set $k=n-r\geq 2$, the Schubert varieties in $\OG(r,2n)$ are indexed by a set $\Tilde{\mc[P]}(k,n)$ defined as follows. First for any non-negative integer $k$ we say a partition $\lambda$ is {\it $k$-strict} if no part greater than $k$ is repeated, i.e., $\lambda_j>k$ implies $\lambda_{j+1}<\lambda_j$. To any $k$-strict partition $\lambda$, there is associated a number in $\{0,1,2\}$ called the {\it type} of $\lambda$, denoted $\text{type}(\lambda)$ (cf. \cite[$\S$3]{BKT1}). For example, if $\lambda$ has no part equal to $k$, then $\text{type}(\lambda)=0$. The elements of $\Tilde{\mc[P]}(k,n)$ are the $k$-strict partitions contained in an $(n-k)\times(n+k-1)$ rectangle. For any $\lambda\in \Tilde{\mc[P]}(k,n)$, we define index set $P'=P'(\lambda) =\{p'_1(\lambda)<\cdots <p'_r(\lambda)\}\subset[1,2n]$ by
\begin{align*}
    p'_j(\lambda) =& n+k-1-\lambda_j+\#\{i<j\mid \lambda_i+\lambda_j\le 2k-1+j-i\} \\
    +& \begin{cases}
      1, & \text{if }\lambda_j>k\text{, or }\lambda_j=k<\lambda_{j-1}\text{ and }n-1+j+\text{type}(\lambda)\text{ is even,} \\
      2, & \text{otherwise.}
    \end{cases}
\end{align*}
Given any fixed isotropic flag $F_\bullet$, each $\lambda \in \Tilde{\mc[P]}(k,n)$ indexes a codimension $|\lambda|$ Schubert variety $X_\lambda(F_\bullet)$ in $\OG(r,2n)$, defined as the locus of $\Sigma\in \OG(r,2n)$ such that
$$\dim(\Sigma\cap F_{p_j'(\lambda)})\ge j\text{, if }p_j'(\lambda)\ne n+1,$$
$$\dim(\Sigma\cap \Tilde{F}_{n})\ge j\text{, if }p_j'(\lambda)= n+1$$
for all $j$ with $1\le j\le \ell(\lambda)$, where $\ell(\lambda)$ is the number of non-zero parts of $\lambda$. For each $\lambda\in \Tilde{\mc[P]}(k,n)$, we let $\tau_\lambda$ denote the cohomology class in $H^{2|\lambda|}(\OG(r,2n),\Z)$ dual to the cycle determined by the Schubert variety indexed by $\lambda$.
\vskip1ex

The special Schubert varieties for $\OG(r, 2n)$ are $X_1,\ldots,X_{k},X_k',X_{k+1},\ldots,X_{n+k-1}$. They are defined by a single Schubert condition as follows. For $p\ne k$, we have
$$X_p(F_\bullet)=\{\Sigma\in \OG(r, 2n)\mid \Sigma\cap F_{\epsilon(p)}\ne 0\}$$
where 
  \begin{equation}
    \epsilon(p) = n+k-p+
    \begin{cases}
      2, & \text{ if }p\le k \\
      1, & \text{ if }p>k.
    \end{cases}
  \end{equation}
For any  $n$, define  $$X_k(F_\bullet)=\{\Sigma\in \OG(r, 2n)\mid \Sigma\cap F_{n}\ne 0\}$$ and $$X_k'(F_\bullet)=\{\Sigma\in \OG(r, 2n)\mid \Sigma\cap \Tilde{F}_{n}\ne 0\}.$$
 We let $\tau_p$ denote the cohomology class of $X_p(F_\bullet)$ for $1\le p\le n+k-1$ and $\tau_k'$ denote the cohomology class of $X_k'(F_\bullet)$. Our definition of $\tau_k, \tau_k'$ is slightly different from that of \cite[$\S$3.2]{BKT1} to make them compatible under taking limits as $n \to \infty$ (cf. the proof of Proposition \ref{stable cohomology presentation}). Their $\tau_k, \tau_k'$ coincides with our  $\tau_k, \tau_k'$ respectively for odd $n$. However, for even $n$, their  $\tau_k'$ (resp. $\tau_k$) is our $\tau_k$ (resp. $\tau_k'$).

We have a short exact sequence of vector bundles on $\OG(r=n-k,2n)$. 
\begin{equation}\label{ses_grmn}
    0\to \mathcal{S}\to V_D\to \mc[Q]\to 0.
\end{equation}
Here $V_D$ is the trivial bundle of rank $2n$, $\mathcal{S}$ is the taotological subbundle of rank $n-k$, and $Q$ is the quotient bundle of rank $n+k$. Let $c_i=c_i(Q)$ for $1\le i\le n+k$ be the $i$-th Chern class of the quotient bundle $Q$. 
Then, we have the following presentation of $H^*(\OG(r,2n),\Z)$ from \cite[Theorem 3.2]{BKT1}.

 First, for each $s>0$, let $\Delta_s$ denote the $s\times s$ Schur determinant:
  $$\Delta_s=\det(c_{1+j-i})_{1\le i,j\le s}.$$  
  By \cite[Identity 40]{BKT1},  we have
\begin{equation}\label{c_p describe}
    c_p( \mc[Q] ) = 
    \begin{cases}
      \tau_p, & \text{ if }p< k, \\
      \tau_k+\tau_k', & \text{ if }p=k,\\
      2\tau_p , & \text{ if }p>k.
    \end{cases}
 \end{equation}  
  Then, we have:
\begin{thm}\label{presentation cohomology D}
 The cohomology ring $H^*(\OG(n-k,2n),\Z)$ is presented as a quotient of the polynomial ring $$\Z[\tau_1,\ldots,\tau_k,\tau_k',\tau_{k+1},\ldots,\tau_{n+k-1}]$$ modulo the relations
 \begin{equation}\label{presentation T1}
     \Delta_s=0\text{, }n-k<s <n,
 \end{equation}
 \begin{equation}\label{presentation T2}
     \tau_k\Delta_{n-k} = \tau_k'\Delta_{n-k} = \sum_{p=k+1}^n(-1)^{p+k+1}\tau_p\Delta_{n-p},
 \end{equation}
 \begin{equation}\label{presentation T3}
     \sum_{p=k+1}^s(-1)^{p}\tau_p\Delta_{s-p}=0\text{, }n<s<n+k,
 \end{equation}
 \begin{equation}\label{presentation S1}
     \tau_s^2+\sum_{p=1}^s(-1)^p\tau_{s+p}c_{s-p}=0\text{, }k+1\le s<n,
 \end{equation}
 \begin{equation}\label{presentation S2}
     \tau_k\tau_k'+\sum_{p=1}^k(-1)^p\tau_{k+p}\tau_{k-p}=0,
 \end{equation}
 where we set $\tau_0=1, \tau_p=0$ for $p\geq n+k$. 
\end{thm}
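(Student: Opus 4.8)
The plan is to deduce Theorem~\ref{presentation cohomology D} from \cite[Theorem~3.2]{BKT1}, which already establishes a presentation of $H^*(\OG(n-k,2n),\Z)$ of exactly the displayed shape, with the special Schubert classes normalized as in \cite[$\S$3.2]{BKT1}. Denote their generators by $\sigma_1,\dots,\sigma_k,\sigma_k',\sigma_{k+1},\dots,\sigma_{n+k-1}$, with the same Chern classes $c_i=c_i(Q)$ of the quotient bundle in \eqref{ses_grmn}. Since the only way our conventions differ from theirs is in the labelling of the two degree-$k$ classes $\tau_k,\tau_k'$, it suffices to show that this relabelling carries the Buch--Kresch--Tamvakis presentation onto the one stated here.

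First I would record the dictionary. By the remark preceding the theorem, $\tau_p=\sigma_p$ for every $p\ne k$; for odd $n$ one also has $\tau_k=\sigma_k$ and $\tau_k'=\sigma_k'$, so in that case there is nothing to prove, while for even $n$ one has $\tau_k=\sigma_k'$ and $\tau_k'=\sigma_k$. In other words, passing from the \cite{BKT1} conventions to ours is precisely the transposition $\tau_k\leftrightarrow\tau_k'$ of two of the polynomial generators of $\Z[\tau_1,\dots,\tau_k,\tau_k',\tau_{k+1},\dots,\tau_{n+k-1}]$. Note that the symmetric combination $c_k=\tau_k+\tau_k'$ of \eqref{c_p describe}, and hence every Chern class $c_i$ and every Schur determinant $\Delta_s$, is fixed by this transposition.

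Next I would verify that the ideal generated by the relations \eqref{presentation T1}--\eqref{presentation S2} is invariant under $\tau_k\leftrightarrow\tau_k'$. In \eqref{presentation T1}, \eqref{presentation T3} and \eqref{presentation S1} the classes $\tau_k,\tau_k'$ occur only inside the $\Delta_s$ and the $c_{s-p}$, i.e.\ only through the symmetric $c_k=\tau_k+\tau_k'$, so those relations are literally fixed. The relation \eqref{presentation T2} is the pair of equalities $\tau_k\Delta_{n-k}=\tau_k'\Delta_{n-k}=\sum_{p=k+1}^n(-1)^{p+k+1}\tau_p\Delta_{n-p}$, which is manifestly symmetric in $\tau_k$ and $\tau_k'$; and \eqref{presentation S2} involves only the symmetric product $\tau_k\tau_k'$ together with the terms $\tau_{k+p}\tau_{k-p}$ with $p\ge1$, none of which is $\tau_k$ or $\tau_k'$. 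Hence the transposition is a graded ring automorphism of the ambient polynomial ring preserving the relation ideal and sending each $\sigma_\bullet$ to the corresponding $\tau_\bullet$, so it descends to the desired isomorphism with $H^*(\OG(n-k,2n),\Z)$.

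The substance here is bookkeeping rather than geometry, and the one place to be careful — the main obstacle — is the reconciliation of conventions: one must match the antidiagonal bilinear form defining $\SO(2n)$, the indexing of the special classes $\tau_p$ through $\epsilon(p)$ and through the formula for $p_j'(\lambda)$, the identification $c_i=c_i(Q)$ via \eqref{c_p describe}, and the precise signs in \eqref{presentation T2} and \eqref{presentation T3}, with the corresponding data in \cite{BKT1}. One must also confirm (this is deferred to the proof of Proposition~\ref{stable cohomology presentation}) that the modified labelling of $\tau_k,\tau_k'$ — the whole reason for departing from \cite{BKT1} at this point — is the one compatible with the stabilization maps $\pi_n$, so that the presentation above passes correctly to the limit $n\to\infty$.
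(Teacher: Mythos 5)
Your proposal is correct and matches the paper's treatment: the paper states this theorem as a direct citation of \cite[Theorem 3.2]{BKT1}, with the only new content being the remark that their $\tau_k,\tau_k'$ differ from the paper's by the transposition $\tau_k\leftrightarrow\tau_k'$ when $n$ is even, which (as you verify) fixes every $c_i$, every $\Delta_s$, and the relation ideal \eqref{presentation T1}--\eqref{presentation S2}. Your write-up simply makes explicit the symmetry check that the paper leaves implicit.
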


Now,  we can get a similar result as   \cite[Proposition 11]{KR} for the image of $\xi^P$. First, we define $$x_j=-c_1(\mathcal{F}_j/\mathcal{F}_{j-1}),\,\,\,\text{ for $1\le j\le n$}. $$
\begin{lemma}\label{epsilon s_j}
  For $1\le j\le n$, the Schubert divisor $\epsilon_{s_j}\in H^2(\Fl,\Z)$ corresponding to the simple reflection $s_j$ is given by \begin{align*} \epsilon_{s_j}&=x_1+\cdots + x_j,\,\,\,\text{for $1\leq j\leq n-2$}\\
  &=\frac{1}{2}(x_1+\cdots + x_{n-1}-x_n),\,\,\,\text{for $j= n-1$}\\  
  &=\frac{1}{2}(x_1+\cdots + x_n),\,\,\,\text{for $ j=n$}.
  \end{align*} 
  Moreover, we have $\xi^B(\overline{t}_j)=x_j$\, for all $1\le j\le n$.
\end{lemma}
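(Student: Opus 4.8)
The plan is to compute the first Chern classes of the line bundles $\mathcal{L}(\omega_j)$ on $\Fl$ in terms of the tautological quotient line bundles, and then to match this with the Borel presentation together with the known combinatorics of the $D_n$ root system. Recall that on $\Fl = \SO(2n)/B_D$ the tautological flag $\mathcal{F}_1 \subset \cdots \subset \mathcal{F}_n \subset \mathcal{E}$ is the pullback of the standard flag under $G \to G/B_D$, and the fiber of $\mathcal{F}_j/\mathcal{F}_{j-1}$ over the base point is the $T_D$-weight space for the character $t \mapsto t_j$ (with our convention \eqref{tori}), i.e. the character whose derivative is the $j$-th coordinate function $\overline{\mathbf{t}} \mapsto x_j$ on $\mathfrak{t}_D$ from \eqref{tori algebra}. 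Hence $c_1(\mathcal{F}_j/\mathcal{F}_{j-1}) = \beta(\varepsilon_j)$ where $\varepsilon_j \in \mathfrak{t}_D^*$ is the $j$-th coordinate, so by our definition $x_j = -\beta(\varepsilon_j) = \beta(-\varepsilon_j)$. Here I am writing $\varepsilon_j$ for the standard basis of the weight lattice in type $D_n$ (not to be confused with the Schubert classes $\epsilon_w$); one should pick a non-clashing symbol in the final writeup.

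First I would record the standard expression of the fundamental weights of $D_n$ in the $\varepsilon$-basis (following [Bo, Planche IV], as the paper's indexing convention dictates): $\omega_j = \varepsilon_1 + \cdots + \varepsilon_j$ for $1 \le j \le n-2$, while $\omega_{n-1} = \tfrac12(\varepsilon_1 + \cdots + \varepsilon_{n-1} - \varepsilon_n)$ and $\omega_n = \tfrac12(\varepsilon_1 + \cdots + \varepsilon_{n-1} + \varepsilon_n)$. Applying the Borel homomorphism $\beta$ (which is $\mathbb{C}$-linear on $\mathfrak{t}^*$) and using \eqref{eqnborel}, namely $\beta(\omega_j) = \epsilon_{s_j}$, together with $\beta(\varepsilon_j) = -x_j$, gives immediately
$$\epsilon_{s_j} = \beta(\omega_j) = -(x_1 + \cdots + x_j) \quad\text{for } 1 \le j \le n-2,$$
and analogously the half-sum expressions for $j = n-1, n$. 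This is the claimed formula up to an overall sign, and the sign is fixed by the chosen orientation convention $x_j = -c_1(\mathcal{F}_j/\mathcal{F}_{j-1})$ — one must check that the paper's conventions make the simple roots $\alpha_j = \varepsilon_j - \varepsilon_{j+1}$ ($j<n$), $\alpha_n = \varepsilon_{n-1}+\varepsilon_n$ correspond to positive roots realized by $\mathfrak{b}$, which in turn pins down whether $\omega_j$ or $-\omega_j$ maps to $x_1+\cdots+x_j$. Given how the Borel subgroup $B_D$ is set up inside the upper-triangular matrices of $\SL(2n)$, this is a direct but purely bookkeeping verification.

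For the final assertion $\xi^B(\overline{t}_j) = x_j$: by definition $\xi^B = \xi^B_{\omega_1}$ is $\beta \circ ((\theta_{\omega_1})_{|T}^*)^{-1}$, and under the identification \eqref{eqn1} the element $\overline{t}_j \in \mathbb{C}[T]^{\{1\}} = \mathbb{C}[T]$ is, via \eqref{theta t}, precisely the pullback $(\theta_{\omega_1})_{|T}^*(x_j)$ of the $j$-th coordinate function $x_j \in S(\mathfrak{t}_D^*) = \mathbb{C}[\mathfrak{t}_D]$. Hence $((\theta_{\omega_1})_{|T}^*)^{-1}(\overline{t}_j) = x_j \in \mathfrak{t}_D^*$, and applying $\beta$ gives $\xi^B(\overline{t}_j) = \beta(x_j)$. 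Finally $\beta(x_j) = \beta(-\varepsilon_j) = x_j$ as a cohomology class, by the Chern class computation above — note the mild abuse of notation whereby $x_j$ denotes both the linear form on $\mathfrak{t}_D$ and its image $\beta(x_j) = -c_1(\mathcal{F}_j/\mathcal{F}_{j-1})$ in $H^2(\Fl,\mathbb{Z})$, which is exactly the identification being asserted. The main obstacle here is not conceptual but notational: one has to be scrupulously careful that the sign conventions in \eqref{theta t}, in the definition of $x_j$, in the Borel map $\beta$ (which sends $\mu$ to $c_1$ of the bundle associated to $\mu^{-1}$), and in the identification $G/B \simeq K/T_o$ all compose to give a consistent $+$ sign rather than a $-$ sign. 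I would treat the $j = n-1$ and $j = n$ cases at the end, once the signs are locked down, since they only differ by the expression of the spin fundamental weights in the $\varepsilon$-basis.
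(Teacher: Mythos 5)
Your overall strategy for the first assertion is the same as the paper's: express each fundamental weight in the coordinate basis $\delta_1,\dots,\delta_n$ of $X(T_D)$ (your $\varepsilon_j$) via [Bo, Planche IV], apply the Borel homomorphism, and identify $\beta(\delta_j)$ with $\pm x_j$ through the Chern class of $\mathcal{F}_j/\mathcal{F}_{j-1}$. The one step you leave open is exactly the one that carries all the content, and as written you have it with the wrong sign: you assert $c_1(\mathcal{F}_j/\mathcal{F}_{j-1})=\beta(\varepsilon_j)$, hence $x_j=\beta(-\varepsilon_j)$, and then defer the sign to a ``bookkeeping verification.'' But the paper's convention (stated in Section 2) is that $\mathcal{L}(\mu)$ is the bundle associated to the $B$-module $\mu^{-1}$, while the fiber of $\mathcal{F}_j/\mathcal{F}_{j-1}$ at the base point is the weight space of the character $\delta_j$ itself; hence $\mathcal{F}_j/\mathcal{F}_{j-1}\simeq \mathcal{L}(-\delta_j)$ and therefore $x_j=-c_1(\mathcal{F}_j/\mathcal{F}_{j-1})=+\beta(\delta_j)$, with no residual sign ambiguity and no need to re-examine which roots are positive. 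This is not merely cosmetic: the unresolved sign propagates into your last step, where you write $\beta(x_j)=\beta(-\varepsilon_j)$ even though the $j$-th coordinate function on $\mathfrak{t}_D$ in the parametrization \eqref{tori algebra} is $+\varepsilon_j=d\delta_j$; your two sign slips happen to cancel in the final answer, but the intermediate chain is internally inconsistent. Once you replace $x_j=\beta(-\varepsilon_j)$ by $x_j=\beta(\delta_j)$, both halves of your argument close up correctly.

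For the second assertion your route genuinely differs from the paper's. The paper quotes [Ku2, Proposition 24(c)] to write $\xi^B(\overline{t}_j)$ as a difference of Schubert divisors ($\epsilon_{s_j}-\epsilon_{s_{j-1}}$ for $j\le n-2$, and the analogous combinations for $j=n-1,n$) and then substitutes the formulas from the first part. You instead unwind the definition $\xi^B=\beta\circ\bigl(({\theta_{\omega_1}}_{|T})^*\bigr)^{-1}$ directly, observing from \eqref{theta t} that $\overline{t}_j=({\theta_{\omega_1}}_{|T})^*(d\delta_j)$, so that $\xi^B(\overline{t}_j)=\beta(d\delta_j)=x_j$. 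This is a legitimate and arguably cleaner argument (it is essentially the proof of the cited proposition specialized to type $D$), and it avoids an external reference at the cost of having to be explicit about the identification \eqref{eqn1}; with the sign of $\beta(\delta_j)$ corrected as above it is complete.
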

\begin{proof} By the identity \eqref{eqnborel}, for any $1\leq j\leq n$,
\begin{equation} \label{neweqn1}
\beta (\omega_j) :=c_1(\mathcal{L}(\omega_j))= \epsilon_{s_j},
\end{equation}
where $\beta: S(\mathfrak{t}^*_D) \to H^*(\Fl, \mathbb{C})$ is the Borel homomorphism.

For $1\leq i\leq n$, since $\mathcal{F}_i/\mathcal{F}_{i-1} \simeq \mathcal{L}(-\delta_i)$, where $\delta_i$ is the character of $T_D$ taking $\diag (t_1, \dots, t_n, t_n^{-1}, \dots, t_1^{-1}) \mapsto t_i$,
\begin{equation} \label{neweqn2}
x_i=-c_1( \mathcal{F}_i/\mathcal{F}_{i-1})
= \beta(\delta_i).
\end{equation}
Now, by \cite[Planche IV]{Bo}, combining the equations \eqref{neweqn1} and \eqref{neweqn2}, we get
\begin{align*} \epsilon_{s_j}&=\beta(\delta_1+\cdots + \delta_j)= x_1 +\dots +x_j,\,\,\,\text{for $1\leq j\leq n-2$}\\
  &= \beta\left(\frac{1}{2}(\delta_1+\cdots + \delta_{n-1}-\delta_n)\right)=  
  \frac{1}{2}(x_1+\cdots + x_{n-1}-x_n),\,\,\,\text{for $j= n-1$}\\  
  &=  \beta\left(\frac{1}{2}(\delta_1+\cdots + \delta_n)\right)=   
  \frac{1}{2}(x_1+\cdots + x_n),\,\,\,\text{for $ j=n$}.
  \end{align*} 
This proves the first part of the lemma. 

Further, by \cite[Proposition 24(c)]{Ku2}, 
\begin{align*} &\xi^B(\bar{t}_j)\\
&=\epsilon_{s_j}- \epsilon_{s_{j-1}}= x_j,\,\,\,\text{for $1\leq j\leq n-2$}\\
  &= \epsilon_{s_{n-1}}+\epsilon_{s_n}- \epsilon_{s_{n-2}} =\\
  &
   \frac{1}{2}(x_1+\cdots + x_{n-1}-x_n)+  \frac{1}{2}(x_1+\cdots + x_{n-1}+x_n)  
    -(x_1+\cdots + x_{n-2})  =x_{n-1}  
   ,\,\text{for $j= n-1$}\\  
  &=  \epsilon_{s_{n}}-\epsilon_{s_{n-1}}
 =   
  \frac{1}{2}(x_1+\cdots + x_n)-\frac{1}{2}(x_1+ \dots +x_{n-1}-x_n) = x_n,\,\,\,\text{for $ j=n$}.
  \end{align*} 
  This proves the second part of the lemma.
  \end{proof}
  
By Lemma \ref{epsilon s_j}, we get:
\begin{lemma}\label{CQ and CQ*}
 For $0\le j\le n$, $$c(\mc[Q]_j)c(\mc[Q]_j^*)=\prod_{p=j+1}^n(1-x_p)(1+x_p) \in H^*(\Fl, \Z) ,$$
where $\mc[Q]_j :=\mc[E]/\mc[F]_j$ and $c(\mc[Q]_j)$ is the total Chern class of $\mc[Q]_j$. 
\end{lemma}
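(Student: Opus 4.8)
The plan is to compute the Chern classes of the tautological bundles on $\Fl$ directly from the filtration structure and the self-duality of the orthogonal geometry. First I would recall that the full isotropic flag variety $\Fl = \SO(2n)/B_D$ carries the tautological filtration $\mc[F]_1 \subset \cdots \subset \mc[F]_n \subset \mc[E]$, and that by the isomorphisms $\mc[F]_i/\mc[F]_{i-1} \simeq \cl(-\delta_i)$ used in the proof of Lemma \ref{epsilon s_j}, we have $c_1(\mc[F]_i/\mc[F]_{i-1}) = -x_i$, so that the total Chern class of $\mc[F]_j$ is $c(\mc[F]_j) = \prod_{i=1}^{j}(1 - x_i)$ by the Whittney sum formula applied to the filtration.

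Next, the key input is the self-duality coming from the quadratic form: the form $\langle\,,\,\rangle$ identifies $\mc[E] \simeq \mc[E]^*$, and under this identification the subbundle $\mc[F]_j$ pairs with its perpendicular $\mc[F]_j^\perp = \mc[F]_{2n-j}$ (using the convention $\mc[F]_{n+i} = \mc[F]_{n-i}^\perp$), so $\mc[Q]_j = \mc[E]/\mc[F]_j \simeq (\mc[F]_j^\perp)^* = (\mc[F]_{2n-j})^*$. Equivalently, dualizing the exact sequence $0 \to \mc[F]_j \to \mc[E] \to \mc[Q]_j \to 0$ and using $\mc[E] \simeq \mc[E]^*$ gives $0 \to \mc[Q]_j^* \to \mc[E] \to \mc[F]_j^* \to 0$, so $c(\mc[Q]_j^*) = c(\mc[F]_j^*)/c(\mc[E]^*) = c(\mc[F]_j^*)$ since $\mc[E]$ is trivial. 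Therefore $c(\mc[Q]_j^*) = \prod_{i=1}^{j}(1+x_i)$ (Chern classes of a dual bundle flip the sign of the odd-degree pieces, and $c(\mc[F]_j) = \prod(1-x_i)$ gives $c(\mc[F]_j^*) = \prod(1+x_i)$). On the other hand, from $0 \to \mc[F]_j \to \mc[E] \to \mc[Q]_j \to 0$ and $c(\mc[E]) = 1$ we get $c(\mc[Q]_j) = c(\mc[F]_j)^{-1} = \prod_{i=1}^{j}(1-x_i)^{-1}$. Hence
\[
c(\mc[Q]_j)\,c(\mc[Q]_j^*) = \prod_{i=1}^{j}\frac{1+x_i}{1-x_i}.
\]
This is not yet the claimed product; to finish I would combine this with the total Chern class of the \emph{full} quotient $\mc[Q]_0 = \mc[E]$, which is trivial, or more efficiently use that $\mc[E]$ self-dual gives a relation on the whole flag: applying the above with $j = n$ is not quite the endpoint either since $\mc[F]_n$ is maximal isotropic, so $\mc[F]_n^\perp = \mc[F]_n$ and $\mc[Q]_n \simeq \mc[F]_n^*$, forcing $\prod_{i=1}^n(1-x_i)^{-1}\cdot \prod_{i=1}^n(1+x_i)$ to agree with $c(\mc[F]_n^*)c(\mc[Q]_n^*) = \prod_{i=1}^n(1+x_i)^2$; comparing yields $\prod_{i=1}^n(1-x_i)(1+x_i) = 1$ in $H^*(\Fl,\Z)$, i.e. $\prod_{i=1}^n(1-x_i^2) = 1$. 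Feeding this back in,
\[
c(\mc[Q]_j)c(\mc[Q]_j^*) = \frac{\prod_{i=1}^n(1+x_i)}{\prod_{i=1}^n(1-x_i)}\bigg/\frac{\prod_{i=j+1}^n(1+x_i)}{\prod_{i=j+1}^n(1-x_i)} = \frac{1}{\prod_{i=1}^n(1-x_i)^2}\cdot\prod_{i=j+1}^n(1-x_i)(1+x_i),
\]
wait—cleaner is: $c(\mc[Q]_j)c(\mc[Q]_j^*) = \prod_{i=1}^j\frac{1+x_i}{1-x_i}$, and since $\prod_{i=1}^n(1-x_i^2)=1$ we have $\prod_{i=1}^j(1+x_i)(1-x_i) = \prod_{i=j+1}^n(1-x_i^2)^{-1}$; but also $\prod_{i=1}^j\frac{1+x_i}{1-x_i} = \prod_{i=1}^j\frac{(1+x_i)^2}{1-x_i^2} = \prod_{i=1}^j(1+x_i)^2\cdot\prod_{i=j+1}^n(1-x_i^2)$. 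Hmm, that still isn't the stated formula, so the honest statement of the plan is: I expect $\mc[Q]_j$ in the Lemma to denote $\mc[E]/\mc[F]_j$ \emph{but with the perpendicular identification already built in}, namely that what is relevant is $\mc[Q]_j \simeq \mc[F]_j^\perp = \mc[F]_{n+(n-j)}$ pairing, so that $c(\mc[Q]_j) = \prod_{i=j+1}^n(1-x_i)\cdot(\text{contribution of the dual half})$, and the two halves of the $2n$-dimensional space contribute $\prod_{i=j+1}^n(1-x_i)$ and $\prod_{i=j+1}^n(1+x_i)$ respectively.

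The cleanest route, and the one I would actually write, is therefore: observe $\mc[E]$ admits a complete isotropic flag refining $\mc[F]_j$, namely $\mc[F]_j \subset \mc[F]_{j+1}\subset\cdots\subset\mc[F]_n\subset\mc[F]_n^\perp\subset\cdots\subset\mc[F]_j^\perp = \mc[Q]_j\text{-preimage}$, and the successive quotients of $\mc[E]/\mc[F]_j$ are the line bundles $\mc[F]_{i}/\mc[F]_{i-1} \simeq \cl(-\delta_i)$ for $j < i \le n$ together with their duals $\cl(\delta_i)$ for $j<i\le n$ (coming from the quotients $\mc[F]_{n-i}^\perp/\mc[F]_{n-i+1}^\perp \simeq (\mc[F]_{n-i+1}/\mc[F]_{n-i})^*$). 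Hence $\mc[Q]_j$ has a filtration with associated graded $\bigoplus_{i=j+1}^n \big(\cl(-\delta_i)\oplus\cl(\delta_i)\big)$, so
\[
c(\mc[Q]_j) = \prod_{i=j+1}^n c(\cl(-\delta_i))c(\cl(\delta_i)) = \prod_{i=j+1}^n (1-x_i)(1+x_i),
\]
using $\xi^B(\bar t_i) = x_i$ from Lemma \ref{epsilon s_j} to identify $c_1(\cl(-\delta_i)) = -x_i$. Finally, since $\mc[Q]_j$ is \emph{self-dual} under the form (the perpendicular of $\mc[F]_j$ modulo $\mc[F]_j$-type data is again self-paired), $c(\mc[Q]_j) = c(\mc[Q]_j^*)$, and the identity of the Lemma follows by writing $c(\mc[Q]_j)c(\mc[Q]_j^*) = c(\mc[Q]_j)^2$... — but the stated RHS is $\prod(1-x_p)(1+x_p)$, not its square, so in fact $\mc[Q]_j$ is \emph{not} self-dual as a bundle; rather the correct reading is $c(\mc[Q]_j) = \prod_{i=j+1}^n(1-x_i)$ (rank $n-j$ half) and $c(\mc[Q]_j^*)$ picks up the complementary $\prod_{i=j+1}^n(1+x_i)$, which is exactly what the filtration-by-pairs argument gives when split across $\mc[Q]_j$ and its orthogonal complement inside $\mc[E]/\mc[F]_j \cong \mc[F]_j^\perp/\mc[F]_j \oplus (\text{something})$. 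The main obstacle, and the point I would be most careful about, is pinning down precisely which bundle $\mc[Q]_j$ denotes and how the quadratic form identifies $\mc[E]/\mc[F]_j$ with an extension involving $\mc[F]_j^*$ — once the filtration of $\mc[E]/\mc[F]_j$ by the $2(n-j)$ line bundles $\cl(\pm\delta_i)$, $j<i\le n$, is established, splitting it as $c(\mc[Q]_j)\cdot c(\mc[Q]_j^*)$ and invoking Lemma \ref{epsilon s_j} for $c_1(\cl(-\delta_i)) = x_i$ makes the computation immediate. I would double-check the edge cases $j=0$ (where $\mc[Q]_0 = \mc[E]$ is trivial of rank $2n$, forcing $\prod_{p=1}^n(1-x_p^2)=1$, a relation in $H^*(\Fl,\Z)$ consistent with the above) and $j=n$ (where the product is empty and $\mc[Q]_n \simeq \mc[F]_n^*$ has $c(\mc[Q]_n)c(\mc[Q]_n^*) = \prod_{i=1}^n(1-x_i)(1+x_i) = 1$, again consistent).
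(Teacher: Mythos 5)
Your first route contains the decisive error, and it is a one--line fix. Dualizing $0\to\mathcal{F}_j\to\mathcal{E}\to\mathcal{Q}_j\to 0$ gives $0\to\mathcal{Q}_j^*\to\mathcal{E}^*\to\mathcal{F}_j^*\to 0$, and the Whitney formula then reads $c(\mathcal{Q}_j^*)\,c(\mathcal{F}_j^*)=c(\mathcal{E}^*)=1$, so $c(\mathcal{Q}_j^*)=c(\mathcal{F}_j^*)^{-1}=\prod_{i=1}^j(1+x_i)^{-1}$ --- not $\prod_{i=1}^j(1+x_i)$ as you wrote (you inverted the quotient). With the correct value,
$$c(\mathcal{Q}_j)\,c(\mathcal{Q}_j^*)=\frac{1}{c(\mathcal{F}_j)\,c(\mathcal{F}_j^*)}=\prod_{i=1}^j\frac{1}{(1-x_i)(1+x_i)},$$
and the relation $\prod_{i=1}^n(1-x_i)(1+x_i)=1$ --- which you do extract correctly from $\mathcal{Q}_n\simeq(\mathcal{F}_n^\perp)^*=\mathcal{F}_n^*$, i.e.\ $c(\mathcal{F}_n)c(\mathcal{F}_n^*)=1$ --- turns the right-hand side into $\prod_{i=j+1}^n(1-x_i)(1+x_i)$, which is exactly the claim. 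This corrected argument is precisely the paper's proof (the paper packages it as $\prod_{p=j+1}^n(1-x_p)(1+x_p)=\frac{c(\mathcal{F}_n)c(\mathcal{F}_n^*)}{c(\mathcal{F}_j)c(\mathcal{F}_j^*)}$ and then uses $c(\mathcal{F}_n)c(\mathcal{F}_n^*)=1$). So all the ingredients are in your write-up; the sign-of-the-exponent slip is what sent you into the subsequent detours.

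The fallback ``cleanest route'' is not salvageable as written. The lemma defines $\mathcal{Q}_j=\mathcal{E}/\mathcal{F}_j$, which has rank $2n-j$; its associated graded under the full isotropic flag consists of the $2n-j$ line bundles $\mathcal{L}(-\delta_i)$ for $j<i\le n$ \emph{together with} $\mathcal{L}(\delta_i)$ for \emph{all} $1\le i\le n$, not the $2(n-j)$ bundles you list. Hence $c(\mathcal{Q}_j)=\prod_{i=j+1}^n(1-x_i)\cdot\prod_{i=1}^n(1+x_i)$, which (using $\prod(1-x_i^2)=1$) is $c(\mathcal{F}_j)^{-1}$, consistent with Whitney but not equal to $\prod_{i=j+1}^n(1-x_i^2)$. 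The later guesses --- that $\mathcal{Q}_j$ has rank $n-j$, or is self-dual --- contradict the definition given in the statement, and the proposal ends without a completed argument. The fix is simply to return to the first route with the corrected dual Whitney formula.
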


\begin{proof}
    By the definition,  we have $1-x_p=c(\mc[F]_p/\mc[F]_{p-1})$. Then, 
    $$\prod_{p=j+1}^n(1-x_p)(1+x_p)=\prod_{p=j+1}^n c(\mc[F]_p/\mc[F]_{p-1})c((\mc[F]_p/\mc[F]_{p-1})^*)=\frac{c(\mc[F]_n)}{c(\mc[F]_j)}\frac{c(\mc[F]_n^*)}{c(\mc[F]_j^*)}.$$
    From the exact sequence $0\to \mc[F]_j\to \mc[E]\to \mc[Q]_j\to 0$, we get 
    $$c(\mc[Q]_j)c(\mc[F]_j)=1\text{ and }c(\mc[Q]_j^*)c(\mc[F]_j^*)=1.$$
    In particular, we have $c(\mc[Q]_n)c(\mc[F]_n)=1$. From the bilinear form we have $\mc[Q]_n=(\mc[F]_n^\perp)^*$. Further,  from the definition,  we have $\mc[F]_n^\perp=\mc[F]_n$. Thus, we have 
    $$c(\mc[F]_n)c(\mc[F]_n^*)=1.$$
    This proves the lemma.
\end{proof}

The above lemma allows us to prove the following crucial result of the paper in the case of $\SO(2n)$.

\begin{thm}\label{xi image D}
For $2\le k\le n-1$, the map $
\xi^{P_{n-k}}:\rep^\C_{\poly}(L_{n-k})\to H^*(\OG(n-k,2n),\C)$ takes 
$$e_i((\overline{t}_{n-k+1})^2,\ldots,(\overline{t}_{n})^2)\mapsto c_i^2+2\sum_{j=1}^i(-1)^jc_{i+j}c_{i-j},\,\,\text{for any } 1\le i\le k,$$
where $c_p=c_p(\mc[Q])$ is the $p$-th Chern class of the quotient bundle $\mc[Q]$ as in \eqref{ses_grmn} and $e_i$ is the $i$-th elementary symmetric polynomial. 

\end{thm}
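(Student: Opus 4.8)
The plan is to push the whole computation up to the full isotropic flag variety $\Fl=\SO(2n)/B_D$, where the relevant classes have already been pinned down in Lemmas \ref{epsilon s_j} and \ref{CQ and CQ*}, and then to descend along the \emph{injective} pullback $\pi^*\colon H^*(\OG(n-k,2n),\C)\hookrightarrow H^*(\Fl,\C)$ attached to the projection $\pi\colon\Fl\to\OG(n-k,2n)$. First I would apply Theorem \ref{thmmain} to the inclusion $B\subset P_{n-k}$, whose Levi subgroups are $T_D$ and $L_{n-k}$: this yields the commutative square $\pi^*\circ\xi^{P_{n-k}}=\xi^B\circ\gamma$, where $\gamma$ is restriction of (virtual) characters from $L_{n-k}$ to $T_D$. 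The element $e_i(\overline{t}_{n-k+1}^2,\ldots,\overline{t}_n^2)$ belongs to the subring $R$ of \eqref{rep ring D iso}, hence to $\rep^\C_{\poly}(L_{n-k})$, and $\gamma$ sends it to the same regular function on $T_D$. Since $\xi^B$ is a $\C$-algebra homomorphism with $\xi^B(\overline{t}_j)=x_j$ by Lemma \ref{epsilon s_j}, I obtain
$$\pi^*\bigl(\xi^{P_{n-k}}(e_i(\overline{t}_{n-k+1}^2,\ldots,\overline{t}_n^2))\bigr)=e_i(x_{n-k+1}^2,\ldots,x_n^2)\in H^*(\Fl,\C).$$

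The second step is to recognize the right-hand side (which I abbreviate $e_i(x^2)$) through Chern classes. Applying Lemma \ref{CQ and CQ*} with $j=n-k$ gives $c(\mc[Q]_{n-k})c(\mc[Q]_{n-k}^*)=\prod_{p=n-k+1}^n(1-x_p)(1+x_p)=\sum_{a\ge 0}(-1)^a e_a(x_{n-k+1}^2,\ldots,x_n^2)$. Extracting the component of cohomological degree $4i$ — note that each $\overline{t}_j$, hence each $x_j$, sits in degree $2$, so $e_a$ of the squared variables sits in degree $4a$ — this reads $(-1)^i e_i(x^2)=\sum_{a+b=2i}c_a(\mc[Q]_{n-k})c_b(\mc[Q]_{n-k}^*)$. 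Since $\pi^*\mathcal{S}=\mc[F]_{n-k}$ and $V_D$ pulls back to the trivial bundle $\mc[E]$, one has $\mc[Q]_{n-k}=\mc[E]/\mc[F]_{n-k}=\pi^*\mc[Q]$, so that $c_p(\mc[Q]_{n-k})=\pi^*c_p$ and $c_b(\mc[Q]_{n-k}^*)=(-1)^b\pi^*c_b$; hence $(-1)^i e_i(x^2)=\pi^*\bigl(\sum_{a=0}^{2i}(-1)^a c_a c_{2i-a}\bigr)$, using $(-1)^{2i-a}=(-1)^a$.

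Comparing this with the first display and using injectivity of $\pi^*$, I would conclude $\xi^{P_{n-k}}(e_i(\overline{t}_{n-k+1}^2,\ldots,\overline{t}_n^2))=(-1)^i\sum_{a=0}^{2i}(-1)^a c_a c_{2i-a}$, and the reindexing $a=i+j$ rewrites the right-hand side as $\sum_{j=-i}^{i}(-1)^j c_{i+j}c_{i-j}=c_i^2+2\sum_{j=1}^i(-1)^j c_{i+j}c_{i-j}$, which is the asserted formula. I do not anticipate a genuine obstacle: the argument is driven entirely by Lemma \ref{CQ and CQ*} and the injectivity of $\pi^*$, and in particular the ring presentation of $H^*(\OG(n-k,2n),\C)$ from Theorem \ref{presentation cohomology D} is \emph{not} needed. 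The only points that require care are the grading bookkeeping noted above, the verification via \eqref{rep ring D iso} that $e_i(\overline{t}_{n-k+1}^2,\ldots,\overline{t}_n^2)$ genuinely lies in $\rep^\C_{\poly}(L_{n-k})$, and the identification $\pi^*\mc[Q]\simeq\mc[E]/\mc[F]_{n-k}$ that lets Lemma \ref{CQ and CQ*}, which is stated on $\Fl$, be transported down to $\OG(n-k,2n)$.
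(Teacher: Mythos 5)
Your argument is correct and is exactly the paper's proof, written out in full: the paper's one-line justification ("follows from Lemmas \ref{epsilon s_j} and \ref{CQ and CQ*} by taking the degree $2i$ component ... for $j=n-k$, and using the projection $\Fl\to \OG(n-k,2n)$") is precisely your combination of the commutative square from Theorem \ref{thmmain} for $B\subset P_{n-k}$, the identities $\xi^B(\overline{t}_j)=x_j$ and $c(\mc{Q}_{n-k})c(\mc{Q}_{n-k}^*)=\prod(1-x_p^2)$, the identification $\mc{Q}_{n-k}=\pi^*\mc{Q}$, and the injectivity of $\pi^*$. The sign and reindexing bookkeeping in your final display checks out, so there is nothing to add.
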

\begin{proof}
This follows from Lemmas \ref{epsilon s_j} and 
\ref{CQ and CQ*} by taking the degree $2i$ component in Lemma \ref{CQ and CQ*} for $j=n-k$,  and using the projection $\Fl\to \OG(n-k,2n)$.
\end{proof}

For any $k\geq 2$, consider the inverse system: 
\begin{equation}\label{inverse system D}
    \cdots\leftarrow H^*(\OG(n-k,2n),\Z)\xleftarrow{\pi_n^*} H^*(\OG(n+1-k,2(n+1)),\Z)\leftarrow\cdots,
\end{equation}
where $\pi_n:\OG(n-k,2n)\hookrightarrow \OG(n+1-k,2(n+1))$ is the embedding $S\to T_n(S)\oplus \C e_{n}$. Here, $T_n:\C^{2n}\to \C^{2n+2}$ is the linear embedding taking $e_i\mapsto e_i$ for $1\le i\le n-1$, taking $e_i\mapsto e_{i+1}$ for $n\le i\le n+1$, and $e_i\mapsto e_{i+2}$ for $n+2\le i\le 2n$. Notice that in the definition of the isotropic flag $F_\bullet$ we fix an isotropic subspace $L_n \subset \C^{2n}$ for each $n \geq 4$. To define a compatible isotropic subspace $L_{n+1}\subset \C^{2n+2}$ (under the embedding $T_n$), we set $L_{n+1} = T_n(L_n)\oplus \C e_n$.\\

 Define the stable cohomology ring as $$\mathbb{H}^*(\OG_k,\Z)=\varprojlim_n H^*(\OG(n-k,2n),\Z).$$ 
 
 Then, we have the following proposition.
 \begin{prop}\label{stable cohomology presentation} For any $k\geq 2$, the stable cohomology ring  $\mathbb{H}^*(\OG_k,\Z) $
 admits a presentation as polynomial ring $\Z[\tau_1,\ldots,\tau_{k-1},\tau_k,\tau_k',\tau_{k+1},\ldots]$ modulo the relations: 
$$\tau_s^2+\sum_{p=1}^s(-1)^p\tau_{s+p}c_{s-p}=0,\ s\ge k+1,$$
$$\tau_k\tau_k'+\sum_{p=1}^k(-1)^p\tau_{k+p}\tau_{k-p}=0,$$
where we interpret $\tau_0=1$ and 
\begin{equation*}
    c_p=
    \begin{cases}
      \tau_p, & \text{if }p<k, \\
      \tau_k+\tau'_k, & \text{if }p=k, \\
      2\tau_p, & \text{if }p>k.
    \end{cases}
  \end{equation*}
Further,  the natural restriction homomorphism $\varphi_{k,n}:\mathbb{H}^*(\OG_k,\Z)\to H^*(\OG(n-k,2n),\Z)$ takes $c_p$ to $c_p(\mc[Q])$. Moreover, it is surjective. 
\end{prop}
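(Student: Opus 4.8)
The plan is to pass to the inverse limit of the finite-level presentations in Theorem \ref{presentation cohomology D}, and the first task is to check that the structure maps $\pi_n^*$ in the inverse system \eqref{inverse system D} are compatible with the Schubert classes $\tau_p, \tau_k'$ and, equivalently, with the Chern classes $c_p(\mc[Q])$. Concretely, I would first verify that $\pi_n^*\bigl(c_p(\mc[Q]_{n+1})\bigr)=c_p(\mc[Q]_n)$ for all $p$, where $\mc[Q]_n$ is the quotient bundle on $\OG(n-k,2n)$: this follows because the embedding $\pi_n\colon S\mapsto T_n(S)\oplus\C e_n$ pulls back the rank-$(n+k)$ tautological quotient bundle on $\OG(n+1-k,2(n+1))$ to $\mc[Q]_n\oplus(\text{trivial line})$, so the total Chern classes agree. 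Granting \eqref{c_p describe}, this pins down $\pi_n^*$ on the special classes: $\pi_n^*(\tau_p)=\tau_p$ for $p\ne k$, and $\pi_n^*$ sends $\tau_k\mapsto\tau_k$, $\tau_k'\mapsto\tau_k'$ once the compatibility correction of signs between even and odd $n$ (noted in the paragraph defining $\tau_k,\tau_k'$) is taken into account — this is exactly the reason the authors redefined $\tau_k,\tau_k'$ to differ from \cite[\S3.2]{BKT1}, and checking it carefully is where I expect the main bookkeeping obstacle to lie. It amounts to tracking how $F_n$ versus $\Tilde F_n$ behaves under the subspace $L_{n+1}=T_n(L_n)\oplus\C e_n$, i.e. how the ``family'' parity mod $2$ changes under $n\mapsto n+1$.

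Once $\pi_n^*$ is understood, the presentation of $\mathbb{H}^*(\OG_k,\Z)$ is obtained by taking $\varprojlim_n$ of the presentations in Theorem \ref{presentation cohomology D}. The generators $\tau_1,\dots,\tau_{k-1},\tau_k,\tau_k',\tau_{k+1},\dots$ are the obvious compatible families. For the relations: the relation \eqref{presentation S1}, $\tau_s^2+\sum_{p=1}^s(-1)^p\tau_{s+p}c_{s-p}=0$ for $k+1\le s<n$, and the relation \eqref{presentation S2} are visibly stable (they do not involve $n$ except through the cutoff), so they survive in the limit; and as $n\to\infty$ the constraint $s<n$ evaporates, giving the stated relations for all $s\ge k+1$. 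The remaining relations \eqref{presentation T1}, \eqref{presentation T2}, \eqref{presentation T3} all involve the Schur determinants $\Delta_s$ with index $s$ comparable to $n$; I would argue that these become vacuous in the limit, i.e. the ideal they generate, intersected with any fixed total degree, is eventually contained in the ideal generated by \eqref{presentation S1}–\eqref{presentation S2} once $n$ is large. The cleanest way to see this is: the quadratic relations \eqref{presentation S1}–\eqref{presentation S2} already let one solve for $\tau_{2s}$-type classes recursively, so the quotient of $\Z[\tau_1,\dots,\tau_{k-1},\tau_k,\tau_k',\tau_{k+1},\dots]$ by those alone is a free module with an explicit monomial basis (the analogue of the Schubert basis $\tau_\lambda$, $\lambda$ a $k$-strict partition with \emph{no} rectangle constraint); then one checks that in each fixed degree $d$ this monomial basis injects into $H^{2d}(\OG(n-k,2n),\Z)$ for $n$ large (the $k$-strict partitions of size $d$ eventually all fit in the $(n-k)\times(n+k-1)$ rectangle), forcing the $\Delta_s$-relations to be consequences of the quadratic ones in that degree. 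Comparing Hilbert series degree by degree then gives that the proposed presentation is correct.

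For the last two assertions: the restriction homomorphism $\varphi_{k,n}\colon\mathbb{H}^*(\OG_k,\Z)\to H^*(\OG(n-k,2n),\Z)$ is by construction the canonical projection out of the inverse limit, and it sends the universal class $c_p$ to $c_p(\mc[Q])$ because that is how $c_p$ was defined at each finite level and the system is compatible by the first paragraph. Surjectivity of $\varphi_{k,n}$ is then immediate: by Theorem \ref{presentation cohomology D} the ring $H^*(\OG(n-k,2n),\Z)$ is generated by $\tau_1,\dots,\tau_k,\tau_k',\tau_{k+1},\dots,\tau_{n+k-1}$, equivalently by $c_1,\dots,c_{n+k}$ together with $\tau_k$ (say), all of which lie in the image of $\varphi_{k,n}$. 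Alternatively, one can phrase surjectivity as: the inverse system \eqref{inverse system D} consists of surjective maps in each fixed degree and stabilizes there, so the projection from the limit is onto in each degree; the Mittag-Leffler condition holds trivially since everything is a finitely generated free abelian group in each degree, so no $\varprojlim^1$ obstruction arises.

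The step I expect to be genuinely delicate is the parity/sign compatibility of $\tau_k$ and $\tau_k'$ under $\pi_n^*$ — getting the family conventions for $F_n$ versus $\Tilde F_n$ to match up across $n\mapsto n+1$ — since this is precisely the point at which the paper departs from the conventions of \cite{BKT1}. Everything downstream (identifying the limiting relations, the Hilbert-series comparison, surjectivity) is then routine bookkeeping built on Theorem \ref{presentation cohomology D}.
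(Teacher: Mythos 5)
Your overall strategy coincides with the paper's: establish compatibility of $\pi_n^*$ with the Chern classes and the special Schubert classes, observe that the rectangle-dependent relations \eqref{presentation T1}--\eqref{presentation T3} live in cohomological degrees that tend to infinity with $n$ and hence contribute nothing in any fixed degree of the limit, and deduce surjectivity from the fact that the generators of $H^*(\OG(n-k,2n),\Z)$ are hit. Your Chern-class computation ($\pi_n^*\mathcal{Q}_{n+1}\cong \mathcal{Q}_n\oplus\underline{\C}$, hence $\pi_n^*c_p=c_p$) is exactly what the paper invokes via functoriality (note the pulled-back quotient bundle has rank $n+k+1$, not $n+k$, but that is immaterial). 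Your Hilbert-series/monomial-basis justification for discarding \eqref{presentation T1}--\eqref{presentation T3} is more elaborate than needed --- since those relations are generated in degrees $\geq 2(n-k+1)$, they simply cannot produce any element of the ideal in a fixed degree once $n$ is large --- but it is not wrong. The surjectivity argument via generators is fine and in fact a bit more economical than the paper's, which instead uses the $\Z$-basis $\{\tau^\lambda\}_{\lambda\in\Tilde{\mc[P]}(k,n)}$ of \cite[Theorem 3.2(b)]{BKT1} and notes that $\varphi_{k,n}$ sends $\tau^\lambda\mapsto\tau^\lambda$ or $0$.

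The genuine gap is the one you yourself flag and then defer: you never actually prove that $\pi_n^*(\tau_k)=\tau_k$ and $\pi_n^*(\tau_k')=\tau_k'$. This is the only nontrivial geometric input in the whole proposition (everything else is formal), and without it the identification of the limit ring, the meaning of $\tau_k,\tau_k'$ as compatible families, and the stability of relation \eqref{presentation S2} all remain unestablished. The paper settles it not by tracking the family parity of $F_n$ versus $\Tilde{F}_n$ directly (your proposed route, which would require re-examining the type conventions of \cite{BKT1} at each $n$), but by a cleaner cycle-level argument: represent $\tau_k$ as the Schubert class $\epsilon^{P_{n-k}}_{w_n}$ with $w_n=s_{n-1}\cdots s_{n-k}$, compute explicitly that $\pi_n$ carries the representing (opposite) Schubert cell $B_n^-\cdot w_n\cdot(\C e_1\oplus\cdots\oplus\C e_{n-k})$ into the corresponding cell for $\SO(2n+2)$ written in a permuted basis $\tilde{\bf e}$ of $\C^{2n+2}$, and conclude equality by comparing dimensions (both sides have dimension $k$). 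Then --- and this is the shortcut you should adopt --- the compatibility of $\tau_k'$ costs nothing further: since $\tau_k+\tau_k'=c_k$ and $\pi_n^*c_k=c_k$ is already known from the Chern-class computation, $\pi_n^*(\tau_k')=\tau_k'$ follows by subtraction. So only one of the two ambiguous classes needs a geometric argument, and the parity bookkeeping you were worried about can be avoided entirely.
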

\begin{proof}
 Recall from \eqref{c_p describe} that for $c_p(n)=c_p(\mc[Q])$, we have 
 \begin{equation*}
    c_p(n)=
    \begin{cases}
      \tau_p, & \text{if }p<k, \\
      \tau_k+\tau'_k, & \text{if }p=k, \\
      2\tau_p, & \text{if }p>k,
    \end{cases}
  \end{equation*}
  where $\mc[Q]$ is the tautological quotient bundle over $\OG(n-k,2n)$.  (Here we use the notation $c_p(n)$ instead of $c_p$ since we will need to vary $n$.)  
  
  From the functoriality of Chern classes, $\pi_n^*$ takes $c_p(n+1)\to c_p(n)$ for $1\le p\le n+k$, and $c_{n+k+1}(n+1)\mapsto 0$. Also,   $\pi_n^*$ takes $\tau_k$ to $\tau_k$, and $\tau_k'$ to $\tau_k'$:
  
  To prove this, represent $\tau_k=\tau_k^n\in H^{2k}(\OG(n-k,2n), \Z)$ as the Schubert class $\epsilon^{P_{n-k}}_{w_n}$, where $w_n:=s_{n-1}\dots s_{n-k}$. Then, $w_n\cdot (\C e_1 \oplus \dots \oplus \C e_{n-k})=   \C e_1 \oplus \dots \oplus \C e_{n-k-1} \oplus \C e_n$, under the standard basis $\{e_i\}_{1\leq i\leq 2n}$ of $\C^{2n}$. Moreover,
  $$\pi_n\left((\C e_1 \oplus \dots \oplus \C e_{n-k-1} \oplus \C e_n)\right)=   \C e_1 \oplus \dots \oplus \C e_{n-k-1} \oplus \C e_{n+1}\oplus \C e_n.$$
  Consider the (ordered) basis $\tilde{\bf e}:= \{e_1, \dots, e_{n-k-1}, e_n, e_{n-k}, \dots, e_{n-1},\hat{e}_n, e_{n+1}, \dots, e_{2n+2}\}$ of $\C^{2n+2}$. Then,
  $$w_{n+1}\cdot (\C e_1 \oplus \dots \oplus \C e_{n-k-1}\oplus \C e_n\oplus \C e_{n-k})=   \C e_1 \oplus \dots \oplus \C e_{n-k-1} \oplus \C e_n\oplus \C e_{n+1}.$$  
  Thus,   $$\pi_n\left(w_n\cdot (\C e_1 \oplus \dots \oplus \C e_{n-k})\right)= w_{n+1}\cdot  (\C e_1 \oplus \dots \oplus \C e_{n-k-1} \oplus \C e_{n-k}\oplus \C e_{n}).$$
  Further, it is easy to see that
  \begin{equation} \label{neweqn105}  
  \pi_n\left(B_n^-\cdot w_n \cdot  (\C e_1 \oplus \dots \oplus \C  e_{n-k})\right) \subset \tilde{B}_{n+1}^- \cdot \pi_n \left(
  w_n \cdot  (\C e_1 \oplus \dots \oplus \C e_{n-k})\right),
  \end{equation}
  where $B^-_n$ is the Borel subgroup opposite to the standard Borel subgroup $B_n$ of $\SO(2n)$ in the $\{e_i\}_{1\leq i \leq 2n}$ basis of $\C^{2n}$ and $\tilde{B}_{n+1}^-$ is the   Borel subgroup opposite to the standard Borel subgroup  of $\SO(2n+2)$, but under  the $\tilde{\bf e}$ basis of $\C^{2n+2}$. Now, since $\pi_n$ is an embedding and the two sides of the equation \eqref{neweqn105} have the same dimension ($=k$), the inclusion in   the equation \eqref{neweqn105}  is an equality. This proves that $\tau_k^{n+1}$ restricts to $\tau_k^n$ under $\pi_n^*$. Since $\tau_k+\tau_k' =c_k$ and $c_k$ restricts to $c_k$, we get that $\tau_k'$ also restricts to $\tau'_k$. 
  
 Thus, from the presentation  of the ring $H^*(\OG(n-k,2n),\Z)$ (cf. Theorem \ref{presentation cohomology D}),  
   the first three relations \eqref{presentation T1}--\eqref{presentation T3} disappear  in the inverse limit. This proves the presentation of $\mathbb{H}^*(\OG_k,\Z)$ as in the proposition.
   
   For each $\lambda\in\Tilde{\mc[P]}(k,n)$ we define a monomial $\tau^\lambda$ in terms of the special Schubert classes as follows. If $\text{type}(\lambda)\ne 2$, then set $\tau^\lambda=\prod_i\tau_{\lambda_i}$. If $\text{type}(\lambda)=2$ then $\tau^\lambda$ is defined by the same product formula, but replacing each occurrence of $\tau_k$ with $\tau_k'$.   Then, $\{\tau^\lambda\}_{\lambda \in   \Tilde{\mc[P]}(k,n)}$ form a $\Z$-basis of $H^*(\OG(r, 2n), \Z)$ (cf. \cite[Theorem 3.2(b)]{BKT1}).
      
   From the description of $\pi_n^*$,  the natural ring homomorphism $\varphi_{k,n}:\mathbb{H}^*(\OG_k,\Z)\to H^*(\OG(n-k,2n),\Z)$ takes $\tau^\lambda$ to $\tau^\lambda$ whenever $\lambda \in  \Tilde{\mc[P]}(k,n)$   
   and zero otherwise. In particular, $\varphi_{k,n}$ is surjective since $\{\tau^\lambda\}_{\lambda \in   \Tilde{\mc[P]}(k,n)}$ span  $H^*(\OG(r, 2n), \Z)$.  
\end{proof}

By \cite[Proposition 12]{Ku2},  
\begin{equation} \label{neweqn3} \rep^\C_{\poly}(\SO(2k))\simeq \C_{\sym}[\bar{h}_1^2,\ldots,\bar{h}_k^2]\oplus \left(\bar{h}_{1,k}\C_{\sym}[\bar{h}_1^2,\ldots,\bar{h}_k^2]\right),
\end{equation} 
 where $\bar{h}_{1,k} :=\bar{h}_1\bar{h}_2\cdots \bar{h}_k$.  Define (using the decomposition \eqref{levi D}) 
$$\iota_k^n:\rep^\C_{\poly}(\SO(2k))\to \rep^\C_{\poly}(L^D_{n-k})$$
by taking $f(\bar{\bf{h}})\to 1\otimes f(\bar{\bf{t}})$, where $\bar{\bf{h}} := (\bar{h}_1, \dots, \bar{h}_k), \bar{\bf{t}} :=
(\bar{t}_{n-k+1}, \dots, \bar{t}_n)$ and $f(\bar{\bf{t}})$ is the same polynomial written in the $\bar{\bf{t}}$ variables under the transformation  $\bar{t}_{n-k+i}=\bar{h}_i$. This gives rise to a $\mathbb{C}$-algebra homomorphism  $$\xi_{n,k}=\xi^{P_{n-k}}\circ \iota_k^n:\rep^\C_{\poly}(\SO(2k))\to H^*(\OG(n-k,2n),\C).$$ Moreover, by virtue of Theorem
\ref{xi image D} and the isomorphism \eqref{neweqn3},
 $(\xi_{n,k})_{| \C_{\sym}[\bar{h}_1^2,\ldots,\bar{h}_k^2]} 
 $ commutes with the following inverse system:
\begin{center}
\begin{tikzcd}
     & \phantom{H^*(\OG(n-k,2n),\C)} \\
    \rep^\C_{\poly}(\SO(2k)) \arrow{r}{\xi_{n,k}} \arrow{dr}{\xi_{n+1,k}}
    & H^*(\OG(n-k,2n),\C) \arrow{u}{\pi_{n-1}^*}\\
     &H^*(\OG(n-k+1,2n+2),\C) \arrow{u}{\pi_{n}^*} \\
     & \phantom{H^*(\OG(n-k,2n),\C)} \arrow{u}{\pi_{n+1}^*}
\end{tikzcd}
\end{center}
We next show that 
\begin{equation} \label{neweqn5} 
\pi_n^*\circ \xi_{n+1, k} ( \bar{h}_{1,k}) =  \xi_{n, k} ( \bar{h}_{1,k}).
\end{equation}
To prove this, define a morphism $\hat{\pi}_n:\Fl_n \to \Fl_{n+1}$ by
\begin{align*} &\hat{\pi}_n \left((0=F_0\subset F_1\subset \dots \subset F_{2n}=\C^{2n})\right)\\
&= (0\subset T_n(F_1)\subset \dots \subset T_n(F_{n-k-1})\subset \widehat{T_n(F_{n-k})}\subset T_n(F_{n-k})\oplus \C e_n\\
&\subset \dots \subset T_n(F_n)\oplus \C e_n\subset (T_n(F_{n-1})\oplus \C e_n)^\perp\subset \dots\subset (T_n(F_{n-k})\oplus \C e_n)^\perp \subset \\
&\widehat{(T_n(F_{n-k}))^\perp}\subset T_n(F_{n-k-1})^\perp\subset \dots \subset T_n(F_1)^\perp \subset \C^{2n+2}),
\end{align*}
where we map $F_i$ successively to the right side omitting the two terms with $\widehat{}$ over them. 
Clearly, $T_n(F_n)\oplus \C e_n$ and $L_{n+1}$ are in the same family. 
We have the following commutative diagram:
\[
\xymatrix{
\Fl_n  \ar[d]\ar[r]^{\hat{\pi}_n} & \Fl_{n+1}
\ar[d]\\
OG(n-k, 2n)\ar[r]^{\pi_n} & OG(n+1-k, 2(n+1)),
}
\]
where the vertical maps are the canonical projections. From the above definition of $\hat{\pi}_n$, it is clear that the line bundle $\mathcal{F}_{j+1}/\mathcal{F}_j$ over $\Fl_{n+1}$ pulls back under $\hat{\pi}_n$ to the line bundle  $\mathcal{F}_{j}/\mathcal{F}_{j-1}$ over $\Fl_n$ for any $n-k+1\leq j\leq n$. Thus, by Lemma \ref{epsilon s_j} and the injectivity of 
$H^*(OG(n-k, 2n), \Z) \to H^*(\Fl_n, \Z)$, we get the validity of identity \eqref{neweqn5}.

Thus, we get a  $\mathbb{C}$-algebra homomorphism 
$$\xi_k:\rep^\C_{\poly}(\SO(2k))\to \mathbb{H}^*(\OG_k,\C).$$

The following theorem is one of our main results for $D_n$-type groups.
\begin{thm}\label{injectivity}
Let $k\geq 2$. Then, the above $\mathbb{C}$-algebra homomorphism $\xi_k:\rep^\C_{\poly}(\SO(2k))\to \mathbb{H}^*(\OG_k,\C)$ takes the generator
\begin{equation}\label{xi k gen}
    e_i((\bar{h}_1)^2,\ldots,(\bar{h}_k)^2)\mapsto c_i^2+2\sum_{j=1}^i(-1)^jc_{i+j}c_{i-j},\,\,\text{for any } 1\le i\le k,
\end{equation} 
where $c_i$ are as in Proposition \ref{stable cohomology presentation}.

Moreover,  $\xi_k$ is injective.
\end{thm}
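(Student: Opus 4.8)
The plan is to prove the two assertions in order: first that $\xi_k$ sends the generators of $\C_{\sym}[\bar h_1^2,\ldots,\bar h_k^2]$ to the stated polynomials in the $c_i$, and then that $\xi_k$ is injective. The first part is essentially immediate: by Theorem \ref{xi image D} the homomorphism $\xi^{P_{n-k}}$ sends $e_i(\bar t_{n-k+1}^2,\ldots,\bar t_n^2)$ to $c_i(\mc[Q])^2 + 2\sum_{j=1}^i(-1)^j c_{i+j}(\mc[Q])c_{i-j}(\mc[Q])$ in $H^*(\OG(n-k,2n),\C)$, and under the identification $\bar t_{n-k+i}=\bar h_i$ this is exactly $\xi_{n,k}(e_i(\bar h_1^2,\ldots,\bar h_k^2))$. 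Since the $c_p(\mc[Q])$ are the images of the $c_p$ from Proposition \ref{stable cohomology presentation} under $\varphi_{k,n}$, compatibility with the inverse system (already established for $(\xi_{n,k})$ restricted to $\C_{\sym}[\bar h_1^2,\ldots,\bar h_k^2]$ in the discussion preceding the theorem) upgrades this to the claimed formula \eqref{xi k gen} in $\mathbb{H}^*(\OG_k,\C)$.

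For injectivity, I would work with the explicit presentations on both sides. By the isomorphism \eqref{neweqn3}, a general element of $\rep^\C_{\poly}(\SO(2k))$ is $f_0(\bar h_1^2,\ldots,\bar h_k^2) + \bar h_{1,k}\, f_1(\bar h_1^2,\ldots,\bar h_k^2)$ with $f_0,f_1$ symmetric. The key algebraic input is the relation \eqref{presentation S2}: in $\mathbb{H}^*(\OG_k,\C)$ one has $\tau_k\tau_k' = -\sum_{p=1}^k(-1)^p\tau_{k+p}\tau_{k-p}$, an element expressible purely in the $\tau_p$ for $p\ne k$ together with $\tau_k+\tau_k'=c_k$; meanwhile the "square root" object corresponding to $\bar h_{1,k}$ should map to something involving $\tau_k-\tau_k'$ (up to a polynomial in the other $\tau$'s). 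Concretely I expect $\xi_k(\bar h_{1,k})$ to equal $\tfrac12\bigl((\tau_k-\tau_k') + (\text{polynomial in } \tau_1,\ldots,\tau_{k-1})\bigr)$ or a closely related expression — this is the analogue of \cite[Proposition 11]{KR} for type $C$ and should be extracted from Lemma \ref{CQ and CQ*} exactly as Theorem \ref{xi image D} was, by isolating the relevant Chern-class identity at the $B_D$-flag level and pushing down. Granting that, one observes that $\tau_k-\tau_k'$ is a genuinely new polynomial generator (it is algebraically independent of $c_k=\tau_k+\tau_k'$ and the $\tau_p$, $p\ne k$, inside the polynomial ring $\Z[\tau_1,\ldots,\tau_{k-1},\tau_k,\tau_k',\tau_{k+1},\ldots]$ before quotienting), while the relation \eqref{presentation S2} fixes $(\tau_k-\tau_k')^2 = c_k^2 - 4\tau_k\tau_k' = c_k^2 + 4\sum_{p=1}^k(-1)^p\tau_{k+p}\tau_{k-p}$.

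From here the strategy is a dimension/filtration count. On the source side, $\rep^\C_{\poly}(\SO(2k))\cong S(\ft^*)^{W_L}$ is a free module of rank $2$ over the polynomial subring $\C[e_1(\bar h^2),\ldots,e_k(\bar h^2)]$, with basis $\{1,\bar h_{1,k}\}$; on the target side, $\mathbb{H}^*(\OG_k,\C)$ contains the polynomial subring generated by $c_1,\ldots,c_{k-1}$ and $c_k$ and the classes $\tau_p$, $p>k$, and it is free of rank $2$ over an appropriate sub-polynomial-ring with basis $\{1,\tau_k-\tau_k'\}$ (using that the only relation involving $\tau_k,\tau_k'$ separately, as opposed to through $c_k$, is the single quadratic \eqref{presentation S2}). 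Since $\xi_k$ carries $e_i(\bar h^2)\mapsto c_i^2+2\sum(-1)^j c_{i+j}c_{i-j}$ and $\bar h_{1,k}$ to (a unit multiple of) $\tau_k-\tau_k'$ modulo the rank-one free submodule, injectivity reduces to checking that the map on the "even part" $\C_{\sym}[\bar h_1^2,\ldots,\bar h_k^2]\to \mathbb{H}^*(\OG_k,\C)$ is injective — i.e. that the $k$ elements $c_i^2+2\sum_{j=1}^i(-1)^j c_{i+j}c_{i-j}$, $1\le i\le k$, are algebraically independent in $\mathbb{H}^*(\OG_k,\C)$ — and then that the degree-matching on the two rank-one pieces forces injectivity on all of $\rep^\C_{\poly}(\SO(2k))$. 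The algebraic independence of the even generators can be verified by passing to the associated graded or by specializing $c_p\mapsto 0$ for $p>k$, under which $c_i^2+2\sum(-1)^j c_{i+j}c_{i-j}$ becomes (up to sign) the $i$-th elementary symmetric function in the squares of Chern roots, which are manifestly independent. The injectivity on the odd part then follows because a relation $f_0 + \bar h_{1,k} f_1 \mapsto 0$ with not both $f_i=0$ would, after using $(\tau_k-\tau_k')^2\in$ the even subring, produce either $f_0=0$ and $f_1=0$ or a nontrivial relation among the even generators, contradiction.

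\textbf{Main obstacle.} The crux is identifying $\xi_k(\bar h_{1,k})$ precisely enough — i.e. establishing the type-$D$ analogue of \cite[Proposition 11]{KR}, pinning down that it equals $\tfrac12(\tau_k-\tau_k')$ plus a correction polynomial in $\tau_1,\ldots,\tau_{k-1}$ — and then arguing that $\tau_k-\tau_k'$ together with the even generators remain algebraically independent after imposing only the relations \eqref{presentation S1}, \eqref{presentation S2} that survive in $\mathbb{H}^*(\OG_k,\C)$. This is where the slightly nonstandard normalization of $\tau_k,\tau_k'$ (made compatible with the limit) has to be used carefully, and where the "$\Q$-polynomial ring" structure of \cite{BKT1} is the natural tool; I would expect to invoke the freeness statements from \cite{BKT1,KR} rather than re-derive them.
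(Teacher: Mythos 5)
Your first paragraph (the derivation of \eqref{xi k gen}) matches the paper's proof, which likewise deduces it from Theorem \ref{xi image D}, the compatibility with the inverse system, and Proposition \ref{stable cohomology presentation}. Your argument for injectivity on the even part $\C_{\sym}[\bar h_1^2,\ldots,\bar h_k^2]$ is also essentially workable: the paper does this integrally, by reducing mod $2$, where the generators become simply $c_i^2$ in $\Z_2[c_1,\ldots,c_{k-1}]\otimes \Z_2[\tau_k-\tau_k',c_k]/\langle(\tau_k-\tau_k')^2-c_k^2\rangle\otimes\Z_2[c_{k+1},\ldots]/\langle c_{k+1}^2,\ldots\rangle$, whereas you specialize $\tau_p\mapsto 0$ for $p>k$; over $\C$ either route gives the algebraic independence you need.

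The genuine gap is exactly the one you flag as the ``main obstacle'': your treatment of the odd part rests on identifying $\xi_k(\bar h_{1,k})$ as $\tfrac12(\tau_k-\tau_k')$ plus a correction, and on a rank-two freeness statement for $\mathbb{H}^*(\OG_k,\C)$ over a polynomial subring with basis $\{1,\tau_k-\tau_k'\}$; neither is established, and the first is genuinely delicate because $\mathbb{H}^*(\OG_k,\C)$ is not an integral domain, so the identity $\xi_k(\bar h_{1,k})^2=\xi_k\bigl(e_k(\bar h_1^2,\ldots,\bar h_k^2)\bigr)=(\tau_k-\tau_k')^2$ does not pin down $\xi_k(\bar h_{1,k})$ even up to sign. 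The paper sidesteps all of this with a squaring trick: if $\xi_k(f+\bar h_{1,k}g)=0$ with $f,g$ in the even part, then $\xi_k(f^2)=\xi_k(\bar h_{1,k}^2g^2)=\xi_k\bigl(e_k(\bar h^2)\,g^2\bigr)$, and both $f^2$ and $e_kg^2$ lie in the even subring, so injectivity there forces $f^2=e_kg^2$ in the polynomial ring $\Z[e_1,\ldots,e_k]$ --- impossible for $g\neq 0$ by comparing the parity of the degree in $e_k$ on the two sides; hence $g=0$ and then $f=0$. Thus no information about $\xi_k(\bar h_{1,k})$ itself is needed, only the value of its square, which is already supplied by \eqref{xi k gen} for $i=k$ together with $\bar h_{1,k}^2=e_k(\bar h_1^2,\ldots,\bar h_k^2)$. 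If you insist on your route you would have to actually prove the type-$D$ analogue of \cite[Proposition 11]{KR} and the claimed freeness; the squaring argument renders both unnecessary.
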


\begin{proof}
We will simply abbreviate  $e_i((\bar{h}_1)^2,\ldots,(\bar{h}_k)^2)$ by $e_i$.
The first part of the theorem (i.e., \eqref{xi k gen}) follows directly from Theorem \ref{xi image D} and Proposition \ref{stable cohomology presentation}.

For the injectivity of $\xi_k$, since $\C$ is a torsionfree $\Z$-module, we only need to prove that 
$$\xi_k^\Z:\rep^\Z_{\poly}(\SO(2k))\to \mathbb{H}^*(\OG_k,\Z)$$
is injective (cf. \cite[Chap. 5, Sec. 2, Lemma 5]{Sp}), where 
$$\rep^\Z_{\poly}(\SO(2k)):= \Z_{\sym} [\bar{h}_1^2,\ldots,\bar{h}_k^2]\oplus \left(\bar{h}_{1,k}\Z_{\sym}[\bar{h}_1^2,\ldots,\bar{h}_k^2]\right).$$
First, we take the subring $\bar{\mathbb{H}}^*(\OG_k,\Z)$ of $\mathbb{H}^*(\OG_k,\Z)$ generated by $\{\tau_k-\tau_k',c_1,c_2,\ldots,c_k,\ldots\}$. Consider the restricted homomorphism: $$\eta_k^\Z:\Z_{\sym}[\bar{h}_1^2,\ldots,\bar{h}_k^2]\to \bar{\mathbb{H}}^*(\OG_k,\Z),$$
which is well defined by \eqref{xi k gen}. Observe that $\eta_k^\Z$ is a homomorphism of graded rings if we assign degree $4i$ to each $e_i$ and the standard cohomological degree to  $\bar{\mathbb{H}}^*(\OG_k,\Z)$. 

We now prove that $\eta_k^\Z$ is injective:

From Proposition \ref{stable cohomology presentation},  $\bar{\mathbb{H}}^*(\OG_k,\Z)$ has the presentation as polynomial ring $\Z[\tau_k-\tau_k',c_1,c_2,\ldots,c_k,\ldots]$ modulo the relations: 
$$c_s^2+2\sum_{p=1}^s(-1)^p c_{s+p}c_{s-p}=0,\ s\ge k+1,$$
$$c_k^2-(\tau_k-\tau_k')^2+2\sum_{p=1}^k(-1)^pc_{k+p}c_{k-p}=0.$$
Since $\bar{\mathbb{H}}^*(\OG_k,\Z)$ is free $\Z$-module of finite rank in each degree (since so is $ \mathbb{H}^*(\OG_k,\Z)$), we have exact sequence (cf. \cite[Chap. 5, Sec. 2, Lemma 5]{Sp})

$$0\to \Z_2\otimes_\Z \bar{K} \to\Z_2\otimes_\Z \Z_{\sym}[\bar{h}_1^2,\ldots,\bar{h}_k^2],$$
where $\Z_2 := \Z/(2)$ and $\bar{K}$ is the kernel of $\eta_k^\Z$. We next prove that 
$$\eta_k^{\Z_2}:\Z_2\otimes_\Z \Z_{\sym}[\bar{h}_1^2,\ldots,\bar{h}_k^2] \to\Z_2\otimes_\Z \bar{\mathbb{H}}^*(\OG_k,\Z)$$
is injective:

To prove this, observe that by the above presentation of  $\bar{\mathbb{H}}^*(\OG_k,\Z)$,
We have 
$$\Z_2\otimes_\Z \bar{\mathbb{H}}^*(\OG_k,\Z)\simeq\Z_2[c_1,\ldots,c_{k-1}]\otimes \frac{\Z_2[\tau_k-\tau_k',c_k]}{\langle(\tau_k-\tau_k')^2-c_k^2\rangle}\otimes \frac{\Z_2[c_{k+1},c_{k+2},\ldots]}{\langle c_{k+1}^2,c_{k+2}^2,\ldots\rangle}.$$
Moreover,  by the equation \eqref{xi k gen},
the ring homomorphism $\eta_k^{\Z_2}$ takes:

\begin{equation}\label{eta_z2_relation1}
    e_i\mapsto c_i^2,\text{ for any }1\le i\le k-1,
\end{equation}
\begin{equation}\label{eta_z2_relation2}
    e_k\mapsto (\tau_k-\tau_k')^2=c_k^2.
\end{equation}
Thus,  we  conclude that $\eta_k^{\Z_2}$ is injective, so $\Z_2\otimes_\Z \bar{K}=0$. Since $\bar{K}$ is free $\Z$-module of finite rank in each degree, we have $\bar{K}=0$, so $\eta_k^{\Z}$ is injective.

We are now ready to prove the injectivity of $\xi_k^\Z$ (and hence that of $\xi_k$):

Any element in $\rep^\Z_{\poly}(\SO(2k))$ can be written as $f+\bar{h}_{1,k}g$, where $f,g\in\Z[e_1,\ldots,e_k]$. Now, assume that 
\begin{equation}\label{neweqn4}\xi_k^\Z(f+\bar{h}_{1,k}g)=0.
\end{equation}
In particular, 
\begin{equation}\label{xi_square_equal}
 \eta_k^\Z(f^2)=   \xi_k^\Z(f^2)=\xi_k^\Z(\bar{h}^2_{1,k}g^2) =\eta_k^\Z(\bar{h}_{1, k}^2g^2).
\end{equation}

Notice that we have  $\bar{h}_{1,k}^2=e_k((\bar{h}_1)^2,\ldots,(\bar{h}_k)^2)$. 
Thus, we have 
$$\eta_k^{\Z}(f^2(e_1,\ldots,e_k))=\eta_k^{\Z}(e_k\cdot g^2(e_1,\ldots,e_k)).$$
If $g\ne 0$, let the highest degree of $e_k$ in $f,g$ be $p,q$ respectively. Then, the highest degree of $e_k$ in  $f^2(e_1,\ldots, e_k)$ and $e_kg^2(e_1,\ldots, e_k)$ are $2p$ and $2q+1$ respectively. In particular, $f^2\neq e_kg^2$ and hence $  \eta_k^\Z(f^2)\neq \eta_k^\Z(\bar{h}_{1, k}^2g^2)$ (due to the injectivity of $\eta_k^\Z$). This contradicts
\eqref{xi_square_equal} and hence $g=0$ (and hence so is $f=0$) by \eqref{neweqn4}. This proves the injectivity of 
 $\xi_k^\Z$ completing the proof of the theorem.
\end{proof}

\begin{remark}\label{newremark}
The homomorphism $\xi_k:\rep^\C_{\poly}(\SO(2k))\to \mathbb{H}^*(\OG_k,\C)$ 
as in Theorem \ref{injectivity} is {\it not} surjective since the domain is finitely generated whereas the range is not.
\end{remark}

\section{Exceptional Groups}
In the discussion of exceptional groups in the following Sections 4-8, we identify the maximal torus $T$ of connected and simply-connected $G$ by
\begin{equation} \label{neweqn6}
T= \Hom_\Z(\ft^*_\Z, \C^*),
\end{equation}
where $\ft_\Z := \oplus_{i=1}^\ell \Z \alpha_i^\vee$ ($\alpha_i^\vee$ being the simple coroots) and  $\ft_\Z^* := \Hom_\Z(\ft_\Z, \Z)$ is the weight lattice. 

In the following sections, we use the coordinates $(\bar{t}_1, \dots, \bar{t}_\ell)$ of any element $\bar{t}\in \ft$ in the basis $\{\alpha_1^\vee, \dots, \alpha_\ell^\vee\}$.

We use the following in the calculations in the following sections.

\vskip1ex

$\bullet$ The Springer morphism $\theta_\lambda$ restricted to $T$ is determined by using \cite[Theorems 1, 2]{R}. The actual calculation can be found in the link \cite{X}.

\vskip1ex
$\bullet$ Let $L_r$  be the Levi subgroup  (containing the maximal torus) of a maximal parabolic subgroup $P_r$ of $G$ and let $W_r$ be its Weyl group. 
We determine the invariant ring $S(\ft^*)^{W_r}$ by first considering the action of the generators of $W_r$ on $\ft^*$ in the basis consisting of the simple roots making use of \cite[Proposition 12]{Ku2} when $L_r$ is of classical type. When $L_r$ is of exceptional type, we make use of 
 \cite[$\S$3]{lee}. Then, we change the simple root basis to the basis consisting of the fundamental weights by using the Cartan matrix.

\vskip1ex

$\bullet$ We explicitly determine the cup product in $H^*(G/B, \Z)$ by using the following theorem of Duan. The actual calculation can be found in the link \cite{X}.

Write, for any $u,v\in W$,
$$ \epsilon^B_u\cdot \epsilon^B_v=\sum_{w\in W}\, c^w_{u,v} \epsilon^B_w.$$ 

For any strictly upper triangular square matrix $A$ of size $k\times k$, Duan defines an operator (cf. \cite[Definition 2]{D})
$$T_A: \Z[x_1, \dots, x_k]^{(k)}\to \Z,$$
where the superscript $(k)$ denotes the subspace of the homogeneous polynomials of degree $k$ (assigning degree $1$ to each $x_i$). Now, fix a reduced decomposition $s_{i_1}\dots s_{i_k}$ of $w$ in terms of simple reflections
(of length $k$) and define the $k\times k$-matrix $A_w = (a_{p,q})$ by $a_{p, q} :=0 $ if $p\geq q$ and $a_{p,q} := -\alpha_{i_p}(\alpha_{i_q}^\vee)$ for $p<q$. 

 Then, by \cite[Theorem 2.3]{D}, 
\begin{thm}\label{duan2005multiplicative}
Take $u,v,w\in W$ with $k=\ell(w)=\ell(u)+\ell(v)$ and  fix a reduced decomposition $s_{i_1}\dots s_{i_k}$ of $w$. Then,  
 $$c^w_{u,v} = T_{A_w}\left(\left( \sum_{|L|=\ell(u),s_L=u}x_L\right) \left( \sum_{|K|=\ell(v),s_K=v}x_K\right) \right),$$
 where  $L,K\subset \{1, \dots, k\}$, $s_L := \prod_{j\in L}\,s_{i_j}$ (product in the ascending order of $j$), and $x_L := \prod_{j\in L} x_j$.  
 
The result does not depend on the choice of the reduced decomposition of $w$.
\end{thm}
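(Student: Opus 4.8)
My plan is to obtain $c^w_{u,v}$ from a Bott--Samelson resolution of the Schubert variety $\bar\Lambda^B_w$ and to recognize Duan's operator $T_{A_w}$ as fiber integration along this resolution. First I would fix the given reduced decomposition $w=s_{i_1}\cdots s_{i_k}$ and form the Bott--Samelson variety $\Gamma:=P_{i_1}\times_B\cdots\times_B P_{i_k}/B$, a smooth projective variety of dimension $k$ that is an iterated $\mathbb{P}^1$-bundle $\Gamma=\Gamma^{(k)}\to\Gamma^{(k-1)}\to\cdots\to\Gamma^{(0)}=\{\mathrm{pt}\}$, with $\pi_j\colon\Gamma^{(j)}\to\Gamma^{(j-1)}$ the $j$-th projection. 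The multiplication map $m\colon\Gamma\to G/B$ has image $\bar\Lambda^B_w$ and, because the word is reduced, is birational onto it; hence $m_*[\Gamma]=[\bar\Lambda^B_w]$ in $H_{2\ell(w)}(G/B,\Z)$.

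Next I would analyse $H^*(\Gamma,\Z)$. Writing $x_j\in H^2(\Gamma,\Z)$ for the relative hyperplane class of the $j$-th $\mathbb{P}^1$-bundle, $H^*(\Gamma,\Z)$ is $\Z$-free on the monomials $x_L:=\prod_{j\in L}x_j$ ($L\subseteq\{1,\dots,k\}$), and the ring relations --- each $x_j^2$ is a $\Z$-linear combination of the $x_jx_q$ with $q<j$ --- have coefficients read off from the entries $a_{p,q}=-\alpha_{i_p}(\alpha_{i_q}^\vee)$ of $A_w$, these being exactly the Chern data of the line bundles defining the tower. Under the resulting identification of the top piece $H^{2k}(\Gamma,\Z)$ with $\Z[x_1,\dots,x_k]^{(k)}$ modulo the degree-$k$ relations, the iterated Gysin map $(\pi_1)_*\circ\cdots\circ(\pi_k)_*\colon H^{2k}(\Gamma,\Z)\to H^0(\mathrm{pt},\Z)=\Z$ coincides with Duan's operator $T_{A_w}$; checking this is just a matter of matching the recursive definition of $T_A$ against the relations together with $(\pi_j)_*(x_j)=1$.

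I would then establish the key pullback formula: for $u\in W$ with $\ell(u)\le k$,
$$m^*\epsilon^B_u=\sum_{\substack{L\subseteq\{1,\dots,k\}\\ |L|=\ell(u),\ s_L=u}} x_L \quad\text{in }H^{2\ell(u)}(\Gamma,\Z).$$
This is the Bott--Samelson subword description of the pullback of a Schubert class; one proves it by induction on $k$ via the $\mathbb{P}^1$-bundle $\pi_k$ and the divided-difference recursion for Schubert classes, the point being that a subword of length exactly $\ell(u)$ whose product is $u$ is automatically reduced, so no other terms and no coefficients besides $1$ appear in degree $2\ell(u)$. Finally, since $\ell(w)=\ell(u)+\ell(v)$, the coefficient $c^w_{u,v}$ is the Kronecker pairing of $\epsilon^B_u\cup\epsilon^B_v$ with $[\bar\Lambda^B_w]$; using $[\bar\Lambda^B_w]=m_*[\Gamma]$ and the projection formula,
$$c^w_{u,v}=\big\langle \epsilon^B_u\cup\epsilon^B_v,\ m_*[\Gamma]\big\rangle=\big\langle m^*\epsilon^B_u\cup m^*\epsilon^B_v,\ [\Gamma]\big\rangle=T_{A_w}\big(m^*\epsilon^B_u\cdot m^*\epsilon^B_v\big),$$
and substituting the pullback formula gives the stated identity. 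Independence of the chosen reduced decomposition is automatic, since $c^w_{u,v}$ is defined intrinsically.

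The main obstacle is the pullback formula for $m^*\epsilon^B_u$ --- pinning down the exact subword expansion with no correction terms, which requires a careful induction --- together with the bookkeeping that identifies Duan's combinatorial $T_{A_w}$ with the geometric Gysin map (getting the signs and index conventions in the bundle relations to line up with the definition of $A_w$). By contrast, the birationality of $m$ and the projection-formula step are standard.
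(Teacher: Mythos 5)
The paper gives no proof of this theorem --- it is quoted verbatim from Duan \cite[Theorem 2.3]{D} --- and your outline is in essence a correct reconstruction of Duan's own argument: he likewise realizes $T_{A_w}$ as integration over the Bott--Samelson resolution $m:\Gamma\to \bar{\Lambda}^B_w$ and obtains the formula from the subword expansion of $m^*\epsilon^B_u$ together with the projection formula and $m_*[\Gamma]=[\bar{\Lambda}^B_w]$. The two points you flag as delicate (that the subword formula has coefficient exactly $1$ in degree $2\ell(u)$, and that the $\mathbb{P}^1$-bundle relations match the entries $a_{p,q}=-\alpha_{i_p}(\alpha_{i_q}^\vee)$ of $A_w$) are precisely the lemmas established in the cited source, so nothing essential is missing.
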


In the following sections, we only consider the standard maximal parabolic subgroups $P_r $ ($1\leq r \leq \ell$), where its Levi subgroup $L_r$ containing $T$ has for its simple roots all the simple roots except $\alpha_r$. 
 
For our Springer morphism, we take the weight $\lambda$ with the minimum Dynkin index. Then, $\lambda =\omega_1$ (for $G_2$);   $\lambda =\omega_4$ (for $F_4$);   $\lambda =\omega_1$ (for $E_6$);  $\lambda =\omega_7$ (for $E_7$);  $\lambda =\omega_8$ (for $E_8$) (cf. \cite[Corollary A.9]{Ku3}).  With this choice, we abbreviate $\Rep^\C_{\lambda-\poly}(L_r)$ by $\Rep^\C_{\poly}(L_r)$ and $\xi^{P_r}_{\lambda}$ by 
 $\xi^{P_r}$. 
 
 \vskip1ex
 
 {\it We follow the indexing convention as in \cite[Planche V-IX]{Bo}.} 
 \section{$G_2$ }
 
 We take the basis $x_1=\omega_1, x_2= \omega_2-\omega_1$ of $\ft^*_\Z$ and use the coordinates $(t_1, t_2)$ of $t\in T$ defined by (under the identification \eqref{neweqn6}) $t_i=t(x_i)$.  Then, the Springer morphism $\theta_{\omega_1}$ is given as follows:
 \begin{lemma} \label{lem3.1}
 \begin{equation}
    \theta_{\omega_1}(t_1,t_2)= \left(\Theta_1(t)=\frac{1}{6}(2t_1-2t_1^{-1} +t_2 -t_2^{-1}+t_1t_2^{-1}-t_1^{-1}t_2),\Theta_2(t)= \frac{1}{2}(t_1-t_1^{-1}-t_2^{-1}+t_2)\right).
   \end{equation} 
 \end{lemma}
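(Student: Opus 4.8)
The plan is to compute the Springer morphism $\theta_{\omega_1}$ on the torus $T$ directly from its characterization in Definition \ref{def1}, restricted to $T$ using Lemma \ref{lemma1}. Concretely, for $G = G_2$ with its $7$-dimensional fundamental representation $V(\omega_1)$, I would first write down the weights of $V(\omega_1)$ explicitly in terms of the chosen basis $x_1 = \omega_1$, $x_2 = \omega_2 - \omega_1$ of $\ft^*_\Z$; these are the six short roots together with $0$, so on $T$ the diagonal action of $t = (t_1, t_2)$ has eigenvalues $\{1,\, t_1^{\pm 1},\, t_2^{\pm 1} (\text{suitably}),\, (t_1 t_2^{-1})^{\pm 1}\}$ — the precise list being read off from the weight diagram of $V(\omega_1)$ using the indexing convention of \cite[Planche IX]{Bo}.

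Next I would carry out the orthogonal projection $\pi: \End(V(\omega_1)) = \mf[g] \oplus \mf[g]^\perp \to \mf[g]$ at the level of the maximal torus. Since $\theta_{\omega_1}$ is $G$-equivariant and restricts to ${\theta_{\omega_1}}_{|T}: T \to \ft$ (Lemma \ref{lemma1}), on $T$ the image $\rho_{\omega_1}(t)$ is already diagonal, and the projection onto $\ft$ amounts to the orthogonal projection (with respect to the trace form $\langle A,B\rangle = \tr(AB)$) of the diagonal matrix $\mathrm{diag}(\chi_1(t), \dots, \chi_7(t))$ onto the subspace $\ft \subset \mf[t]_{\mathfrak{gl}_7}$ spanned by $d\rho_{\omega_1}(\alpha_1^\vee), d\rho_{\omega_1}(\alpha_2^\vee)$. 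This is a finite linear-algebra computation: expand $d\rho_{\omega_1}(\alpha_i^\vee)$ in the weight basis (its entries are $\langle \mu_j, \alpha_i^\vee\rangle$), then solve the $2\times 2$ linear system for the coefficients $\Theta_1(t), \Theta_2(t)$ so that $\mathrm{diag}(\chi_\bullet(t)) - \Theta_1 d\rho(\alpha_1^\vee) - \Theta_2 d\rho(\alpha_2^\vee)$ is trace-orthogonal to both $d\rho(\alpha_1^\vee)$ and $d\rho(\alpha_2^\vee)$. Substituting $\chi_j(t)$ as the appropriate Laurent monomials in $t_1, t_2$ and simplifying should yield exactly the stated $\Theta_1, \Theta_2$; one can cross-check against \cite[Theorems 1, 2]{R}, which is the reference the authors invoke for these computations (and against the actual calculation recorded in \cite{X}).

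The main obstacle is purely bookkeeping: getting the weights of $V(\omega_1)$ in the $(x_1, x_2)$-coordinates correct and consistent with Bourbaki's indexing, and then keeping track of signs and the normalization of the trace form when computing $d\rho_{\omega_1}(\alpha_i^\vee)$ (in particular, whether the short coroot versus long coroot convention matches, which affects the rational coefficients $\tfrac16$ and $\tfrac12$ appearing in the answer). Once the weight list and the Gram matrix of $\{d\rho(\alpha_1^\vee), d\rho(\alpha_2^\vee)\}$ under $\langle\,,\,\rangle$ are pinned down, the rest is a deterministic $2\times2$ solve, and the appearance of the factor $\tfrac16$ versus $\tfrac12$ is a good sanity check that the long/short root asymmetry of $G_2$ has been handled correctly. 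I would also verify post hoc that $\theta_{\omega_1}$ is a local diffeomorphism at $1$ (equivalently, that the linear part $d\Theta$ at $t = 1$ is invertible), which is guaranteed abstractly by Definition \ref{def1} but provides a useful consistency check on the explicit formulas.
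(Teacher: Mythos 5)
Your proposal is correct and is essentially the paper's own route: the paper gives no written proof of this lemma, instead invoking \cite[Theorems 1, 2]{R} and the computer notebook \cite{X}, and Rogers' theorems are precisely the trace-form orthogonal projection of $\mathrm{diag}(e^{\mu}(t))$ onto the Cartan that you carry out by hand. Your $2\times 2$ solve (with weights $0,\pm x_1,\pm x_2,\pm(x_1-x_2)$ and Gram matrix $\bigl(\begin{smallmatrix}12 & -6\\ -6 & 4\end{smallmatrix}\bigr)$ in the basis $d\rho(\alpha_1^\vee), d\rho(\alpha_2^\vee)$) does reproduce the stated $\Theta_1,\Theta_2$, and your first-order check at $t=1$ confirms the coordinate conventions.
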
   
    
    It is easy to see that 
    $$s_1\cdot x_1=x_2,\ s_1\cdot x_2=x_1,\ s_2\cdot x_1=x_1,\ s_2\cdot x_2=x_1-x_2.$$
    Thus, $s_1$ switch $x_1$ and $x_2$, and $s_2$ fixes $x_1$ and $(x_2-\frac{1}{2}x_1)^2$ and hence
      $$S(\mf[t]^*)^{W_1}=\C [x_1, (x_2-\frac{1}{2}x_1)^2]\,\,\text{and}\,\,
S(\mf[t]^*)^{W_2}=\C_{\sym}\left[x_1, x_2\right],$$  
where $W_i$ is the Weyl group of $L_i$.  

Combining the equation \eqref{eqnborel} with Lemma \ref{lem3.1} and using Theorem \ref{duan2005multiplicative}, we get the following:
\begin{theorem}  The generators of $\Rep^\C_{\poly}(L_1)$ under $\xi^{P_1}$ are mapped as follows:
$$\Theta_1(t)
\mapsto \epsilon^{P_1}_{s_1},\ (\Theta_2(t)-\frac{3}{2}\Theta_1(t))^2\mapsto \frac{3}{4}\epsilon_{s_2s_1}^{P_1}.$$
Similarly, under  $\xi^{P_2}$ we get
$$\Theta_2(t)\mapsto \epsilon^{P_2}_{s_2},\ \Theta_1(t)\Theta_2(t)-\Theta_1^2(t)\mapsto \epsilon_{s_1s_2}^{P_2}.$$
\end{theorem}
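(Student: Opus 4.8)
The plan is to compute the two halves of the theorem in exactly the same way. First, recall from Definition \ref{maindefi} and Theorem \ref{thmmain} that $\xi^{P_r}$ is the composite $\beta^{P_r}\circ({\theta_{\omega_1}(P_r)}^*)^{-1}$, where ${\theta_{\omega_1}(P_r)}^*$ identifies $S(\mathfrak{t}^*)^{W_r}$ with $\Rep^\C_{\poly}(L_r)$. So in order to find the image of a generator $\chi\in \Rep^\C_{\poly}(L_r)$ it suffices to write $\chi = {\theta_{\omega_1}(P_r)}^*(F)$ for an explicit polynomial $F\in S(\mathfrak{t}^*)^{W_r}$, and then compute $\beta^{P_r}(F)\in H^*(G/P_r,\C)$ by expanding $\beta^{P_r}(F)$ in the Schubert basis using \eqref{eqnborel} and Theorem \ref{duan2005multiplicative}. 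The first step, identifying $F$, is immediate: by Lemma \ref{lem3.1} the component functions $\Theta_1(t), \Theta_2(t)$ are precisely ${\theta_{\omega_1}}^*$ applied to the linear coordinate functions that are dual, under the basis $x_1=\omega_1,\ x_2=\omega_2-\omega_1$, to $\alpha_1^\vee$ and to the appropriate combination; concretely one reads off from the formula that $\Theta_1$ corresponds to $x_1=\omega_1$ and $\Theta_2$ to $\tfrac12(\text{something})$. One then checks directly that $\{\Theta_1,\ (\Theta_2-\tfrac32\Theta_1)^2\}$ (resp. $\{\Theta_2,\ \Theta_1\Theta_2-\Theta_1^2\}$) are the images under ${\theta_{\omega_1}(P_r)}^*$ of a set of algebra generators of the invariant ring $S(\mathfrak{t}^*)^{W_1}=\C[x_1,(x_2-\tfrac12 x_1)^2]$ (resp. $S(\mathfrak{t}^*)^{W_2}=\C_{\sym}[x_1,x_2]$), which was computed just above the theorem.

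Next I would apply the Borel homomorphism. By \eqref{eqnborel}, $\beta(\omega_1)=\epsilon_{s_1}$ and $\beta(\omega_2)=\epsilon_{s_2}$, so $\beta(x_1)=\epsilon_{s_1}$ and $\beta(x_2)=\epsilon_{s_2}-\epsilon_{s_1}$. Pushing these forward to $H^*(G/P_1,\C)$ and $H^*(G/P_2,\C)$ via the projections gives $\beta^{P_1}(x_1)=\epsilon^{P_1}_{s_1}$ and $\beta^{P_2}(x_2)=\epsilon^{P_2}_{s_2}$, yielding the two degree-one statements immediately. For the degree-two generators I have to compute the squares/products: $\beta^{P_1}\big((x_2-\tfrac12 x_1)^2\big)$ and $\beta^{P_2}(x_2(x_1+x_2)-x_1^2)$ — wait, more precisely $\beta^{P_2}$ applied to the polynomial representing $\Theta_1\Theta_2-\Theta_1^2$. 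Each of these is a quadratic expression in $\epsilon_{s_1},\epsilon_{s_2}$, and I expand it in the Schubert basis of $H^*(G/P_r,\C)$ in degree $2$ (which for $G_2$ and a maximal parabolic is one-dimensional, spanned by $\epsilon^{P_1}_{s_2 s_1}$ resp. $\epsilon^{P_2}_{s_1 s_2}$) by computing the relevant structure constants $c^w_{u,v}$ with $u,v\in\{s_1,s_2\}$ and $w$ of length $2$, via Duan's formula (Theorem \ref{duan2005multiplicative}) applied to the reduced word for $w$ in $G_2$; the needed input is $-\alpha_1(\alpha_2^\vee)$ and $-\alpha_2(\alpha_1^\vee)$, read from the $G_2$ Cartan matrix. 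The factor $\tfrac34$ in the first line and the coefficient $1$ in the second line emerge from this bookkeeping.

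The main obstacle is purely computational rather than conceptual: getting the small rational constants right. The identification ${\theta_{\omega_1}(P_r)}^*(F)=\chi$ requires carefully inverting the Springer formula of Lemma \ref{lem3.1} on the nose — in particular recognizing that $\Theta_2-\tfrac32\Theta_1$ is the image of a $W_2$-noninvariant linear form whose square is $W_1$-invariant, which is where the $\tfrac32$ and hence the $\tfrac34$ come from — and the Duan-operator computation for the two length-two products in $G_2$ must be done with the correct reduced words and Cartan integers. Since both flag varieties here are low-dimensional and the relevant cohomology groups in the degrees in question are at most one-dimensional, these are finite checks; I would simply carry them out (the explicit arithmetic is recorded in \cite{X}) and confirm the four assignments in the statement.
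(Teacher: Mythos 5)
Your plan coincides with the paper's own proof, which is exactly the one\-/line recipe you describe: realize the generators as $\theta_{\omega_1}(P_r)^*$ of the invariant polynomials in $x_1,x_2$ computed just above the theorem, push forward by the Borel homomorphism via \eqref{eqnborel}, and expand the two length-two products in the (one-dimensional) degree-four Schubert basis using Theorem \ref{duan2005multiplicative}; the paper prints no further detail and relegates the arithmetic to \cite{X}. One caution for when you actually do the finite check: with the Bourbaki $G_2$ conventions the Chevalley/Duan computation gives $\epsilon_{s_1}^2=\epsilon_{s_2s_1}$, $\epsilon_{s_1}\epsilon_{s_2}=\epsilon_{s_1s_2}+\epsilon_{s_2s_1}$, $\epsilon_{s_2}^2=3\epsilon_{s_1s_2}$, whence $\bigl(\epsilon_{s_2}-\tfrac32\epsilon_{s_1}\bigr)^2=-\tfrac34\,\epsilon^{P_1}_{s_2s_1}$ (the $\epsilon_{s_1s_2}$-coefficient cancels, as it must), so the first quadratic image appears to carry the opposite sign from the one printed, while the $P_2$ line comes out exactly as stated.
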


\section{$F_4$}
We use $\{\omega_1,\omega_2,\omega_3,\omega_4\}$ as a basis for $\mf[t]^*$. This gives rise to a coordinate system on 
$T$ as earlier using the identity \eqref{neweqn6}.
\begin{lemma} \label{lem4.1} \begin{equation}
    \theta_{\omega_4}(t_1,t_2, t_3, t_4)= \left(\Theta_1,\Theta_2, \Theta_3, \Theta_4)\right),
   \end{equation} 
   where 
   \begin{align*}
   \Theta_1(t)= & \frac{1}{12}\big( 2t_4^{1}+2t_3^{1}t_4^{-1}+2t_2^{1}t_3^{-1}+2t_1^{1}t_2^{-1}t_3^{1}+2t_1^{1}t_3^{-1}t_4^{1}+2t_1^{1}t_4^{-1}\\
&-2t_1^{-1}t_4^{1}-2t_1^{-1}t_3^{1}t_4^{-1}-2t_1^{-1}t_2^{1}t_3^{-1}-2t_2^{-1}t_3^{1}-2t_3^{-1}t_4^{1}-2t_4^{-1} \big).
  \end{align*}
\begin{align*}
\Theta_2(t)= & \frac{1}{12}\big( 4t_4^{1}+4t_3^{1}t_4^{-1}+4t_2^{1}t_3^{-1}+2t_1^{1}t_2^{-1}t_3^{1}+2t_1^{-1}t_3^{1}+2t_1^{1}t_3^{-1}t_4^{1}\\
&+2t_1^{-1}t_2^{1}t_3^{-1}t_4^{1}+2t_1^{1}t_4^{-1}+2t_1^{-1}t_2^{1}t_4^{-1}-2t_1^{1}t_2^{-1}t_4^{1}-2t_1^{1}t_2^{-1}t_3^{1}t_4^{-1}-2t_1^{-1}t_4^{1}\\
&-2t_1^{-1}t_3^{1}t_4^{-1}-2t_1^{1}t_3^{-1}-2t_1^{-1}t_2^{1}t_3^{-1}-4t_2^{-1}t_3^{1}-4t_3^{-1}t_4^{1}-4t_4^{-1} \big).   
\end{align*}
\begin{align*}
    \Theta_3(t)= & \frac{1}{12}\big( 3t_4^{1}+3t_3^{1}t_4^{-1}+2t_2^{1}t_3^{-1}+2t_1^{1}t_2^{-1}t_3^{1}+2t_1^{-1}t_3^{1}+t_1^{1}t_3^{-1}t_4^{1}\\
&+t_1^{-1}t_2^{1}t_3^{-1}t_4^{1}+t_1^{1}t_4^{-1}+t_2^{-1}t_3^{1}t_4^{1}+t_1^{-1}t_2^{1}t_4^{-1}+t_2^{-1}t_3^{2}t_4^{-1}-t_2^{1}t_3^{-2}t_4^{1}\\
&-t_2^{1}t_3^{-1}t_4^{-1}-t_1^{1}t_2^{-1}t_4^{1}-t_1^{1}t_2^{-1}t_3^{1}t_4^{-1}-t_1^{-1}t_4^{1}-t_1^{-1}t_3^{1}t_4^{-1}-2t_1^{1}t_3^{-1}\\
&-2t_1^{-1}t_2^{1}t_3^{-1}-2t_2^{-1}t_3^{1}-3t_3^{-1}t_4^{1}-3t_4^{-1} \big).   
   \end{align*}
\begin{align*}
\Theta_4(t)= & \frac{1}{12}\big( 2t_4^{1}+t_3^{1}t_4^{-1}+t_2^{1}t_3^{-1}+t_1^{1}t_2^{-1}t_3^{1}+t_1^{-1}t_3^{1}+t_1^{1}t_3^{-1}t_4^{1}\\
&+t_1^{-1}t_2^{1}t_3^{-1}t_4^{1}+t_2^{-1}t_3^{1}t_4^{1}+t_3^{-1}t_4^{2}-t_3^{1}t_4^{-2}-t_2^{1}t_3^{-1}t_4^{-1}-t_1^{1}t_2^{-1}t_3^{1}t_4^{-1}\\
&-t_1^{-1}t_3^{1}t_4^{-1}-t_1^{1}t_3^{-1}-t_1^{-1}t_2^{1}t_3^{-1}-t_2^{-1}t_3^{1}-t_3^{-1}t_4^{1}-2t_4^{-1} \big).
\end{align*}
    \end{lemma}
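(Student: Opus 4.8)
The plan is to compute $\theta_{\omega_4}$ restricted to $T$ straight from Definition~\ref{def1}. Because $\End(V(\omega_4))=\mathfrak{g}\oplus\mathfrak{g}^{\perp}$ is an \emph{orthogonal} decomposition for $\langle A,B\rangle=\tr(AB)$, the value $\theta_{\omega_4}(g)$ is the orthogonal projection of $\rho_{\omega_4}(g)$ onto $\mathfrak{g}$, and by Lemma~\ref{lemma1} this lands in $\mathfrak{t}$ when $g=t\in T$. For $t\in T$, the operator $\rho_{\omega_4}(t)$ is diagonal in a weight basis of $V(\omega_4)$, with eigenvalue $\mu(t)$ on the $\mu$-weight space; a diagonal operator is automatically orthogonal to every root space $\mathfrak{g}_\beta$, so $\theta_{\omega_4}(t)$ is just the orthogonal projection of $\mathrm{diag}\big((\mu(t))_\mu\big)$ onto the subspace $d\rho_{\omega_4}(\mathfrak{t})$ of diagonal operators.

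To make this concrete I would first record the weights of $V(\omega_4)$: since $\omega_4$ has minimum Dynkin index, $V(\omega_4)$ is the $26$-dimensional module whose nonzero weights are exactly the $24$ short roots of $F_4$, each of multiplicity one, together with the zero weight of multiplicity two. Writing the projection as $\theta_{\omega_4}(t)=\sum_{i=1}^{4}\Theta_i(t)\,\alpha_i^{\vee}$ in the simple-coroot coordinates fixed in this section, orthogonality of $\rho_{\omega_4}(t)-d\rho_{\omega_4}\!\big(\sum_i\Theta_i(t)\alpha_i^{\vee}\big)$ against each $d\rho_{\omega_4}(\alpha_j^{\vee})$ gives the linear system
\[
\sum_{i=1}^{4}G_{ij}\,\Theta_i(t)\;=\;\sum_{\alpha\ \mathrm{short}}\langle\alpha,\alpha_j^{\vee}\rangle\,\alpha(t),\qquad G_{ij}:=\sum_{\alpha\ \mathrm{short}}\langle\alpha,\alpha_i^{\vee}\rangle\,\langle\alpha,\alpha_j^{\vee}\rangle ,
\]
where the zero weight drops out because it pairs trivially with the coroots. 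Solving, $(\Theta_1,\dots,\Theta_4)=G^{-1}b(t)$ with $b(t)$ the displayed vector of short-root Laurent monomials; expressing the short roots through the coordinates $t_i=t(\omega_i)$ and inverting the $4\times4$ matrix $G$ over $\Q$ yields the asserted formulas. This normal-equation recipe is exactly what \cite[Theorems 1,\,2]{R} package, and the explicit bookkeeping — enumerating the $24$ short roots and carrying out the rational linear algebra — is done by the program \cite{X}.

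The only genuine inputs beyond routine computation are the weight multiplicities of $V(\omega_4)$ and the normalization of $\langle d\rho_{\omega_4}(\cdot),d\rho_{\omega_4}(\cdot)\rangle$ relative to the standard $W$-invariant form on $\mathfrak{t}$ (equivalently, the scalar relating $G$ to the symmetrized Cartan data), which is pinned down by $\pi\circ d\rho_{\omega_4}=\mathrm{id}$; once these are fixed nothing is subtle, and the main obstacle is simply keeping the long Laurent polynomials organized — hence the appeal to \cite{X}. As consistency checks one verifies that the resulting map is $W(F_4)$-equivariant, that $\theta_{\omega_4}(1)=0$, and that its derivative at $1$ is the identity of $\mathfrak{t}$ (forced by $\pi\circ d\rho_{\omega_4}=\mathrm{id}$), paralleling the $G_2$ computation recorded in Lemma~\ref{lem3.1}.
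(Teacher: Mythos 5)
Your proposal is correct and follows essentially the same route as the paper: the paper offers no written proof beyond citing \cite[Theorems 1, 2]{R} and the notebook \cite{X}, and your normal-equation/orthogonal-projection derivation (using that $\rho_{\omega_4}(t)$ is diagonal, hence orthogonal to the root spaces, and that the nonzero weights of the $26$-dimensional module are the $24$ short roots with multiplicity one) is precisely what Rogers' theorems package, with the explicit Laurent-polynomial arithmetic likewise delegated to \cite{X}.
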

    Combining the equation \eqref{eqnborel} with Lemma \ref{lem4.1} and using Theorem \ref{duan2005multiplicative}, we get the following:
\begin{theorem}  The generators of $\Rep^\C_{\poly}(L_r)$ under $\xi^{P_r}$ are mapped as follows:

(a) ($L_1$):  Let 
$$y_1^{L_1}=-\Theta_1+2\Theta_4,\ y_2^{L_1}=-\Theta_1+2\Theta_3-2\Theta_4,\ y_3^{L_1}=-\Theta_1+2\Theta_2-2\Theta_3,\ y_4^{L_1}=\Theta_1.$$
Then, 
$$\Rep^\C_{\poly}(L_1)
=\C_{\sym}\left[(y_1^{L_1})^2, (y_2^{L_1})^2, (y_3^{L_1})^2 \right] \otimes_\C \C[y_4^{L_1}].$$
Further,  the generators of $\Rep^\C_{\poly}(L_1)$ 
under $\xi^{P_1}$ go to:
$$e_1\left((y_1^{L_1})^2,(y_2^{L_1})^2,(y_3^{L_1})^2\right)\mapsto -\epsilon_{s_2s_1}^{P_1},$$
$$e_2\left((y_1^{L_1})^2,(y_2^{L_1})^2,(y_3^{L_1})^2\right)\mapsto -10\epsilon_{s_2s_3s_2s_1}^{P_1}+28\epsilon_{s_4s_3s_2s_1}^{P_1},$$
$$e_3\left((y_1^{L_1})^2,(y_2^{L_1})^2,(y_3^{L_1})^2\right)\mapsto 12\epsilon_{s_4s_1s_2s_3s_2s_1}^{P_1}-16\epsilon_{s_3s_4s_2s_3s_2s_1}^{P_1},$$
$$y_4^{L_1}\mapsto \epsilon_{s_1}^{P_1}.$$
(b)  ($L_2$):  Let 
$$y_1^{L_2}=-3\Theta_1+2\Theta_2,\ y_2^{L_2}=3\Theta_1-\Theta_2,\ y_3^{L_2}=3\Theta_2-4\Theta_3,\ y_4^{L_2}=-\Theta_2+4\Theta_3-4\Theta_4,\ y_5^{L_2}=-\Theta_2+4\Theta_4.$$
Then, 
$$\Rep^\C_{\poly}(L_2)
=\C_{\sym}[y_1^{L_2},y_2^{L_2}] \otimes_\C  \C_{\sym}\left[y_3^{L_2}, y_4^{L_2}, y_5^{L_2} \right]\Big/\left(y_1^{L_2}+y_2^{L_2}-y_3^{L_2}-y_4^{L_2}-y_5^{L_2}\right).$$
Further, the generators of $\Rep^\C_{\poly}(L_2)$ 
under $\xi^{P_2}$ go to:
$$e_1(y_1^{L_2},y_2^{L_2})\mapsto \epsilon_{s_2}^{P_2},$$
$$e_2(y_1^{L_2},y_2^{L_2})\mapsto 7\epsilon_{s_1s_2}^{P_2}-4\epsilon_{s_3s_2}^{P_2},$$
$$e_1(y_3^{L_2},y_4^{L_2},y_5^{L_2})\mapsto \epsilon_{s_2}^{P_2},$$
$$e_2(y_3^{L_2},y_4^{L_2},y_5^{L_2})\mapsto -5\epsilon_{s_1s_2}^{P_2}+6\epsilon_{s_3s_2}^{P_2},$$
$$e_3(y_3^{L_2},y_4^{L_2},y_5^{L_2})\mapsto -4\epsilon_{s_3s_1s_2}^{P_2}-10\epsilon_{s_2s_3s_2}^{P_2}-36\epsilon_{s_4s_3s_2}^{P_2}.$$
(c) ($L_3$):  Let 
$$y_1^{L_3}=2\Theta_1-\Theta_3,\ y_2^{L_3}=-2\Theta_1+2\Theta_2-\Theta_3,\ y_3^{L_3}=-2\Theta_2+3\Theta_3,\ y_4^{L_3}=2\Theta_3-3\Theta_4,\ y_5^{L_3}=-\Theta_3+3\Theta_4.$$
Then,
$$\Rep^\C_{\poly}(L_3)
=\C_{\sym}[y_1^{L_3},y_2^{L_3}, y_3^{L_3}] \otimes_\C  \C_{\sym}\left[y_4^{L_3}, y_5^{L_3} \right]\Big/\left(y_1^{L_3}+y_2^{L_3}+y_3^{L_3}-y_4^{L_3}-y_5^{L_3}\right).$$
Further, the  generators of $\Rep^\C_{\poly}(L_3)$ 
under $\xi^{P_3}$ go to:
$$e_1(y_1^{L_3},y_2^{L_3},y_3^{L_3})\mapsto \epsilon_{s_3}^{P_3},$$
$$e_2(y_1^{L_3},y_2^{L_3},y_3^{L_3})\mapsto 3\epsilon_{s_2s_3}^{P_3}-5\epsilon_{s_4s_3}^{P_3},$$
$$e_3(y_1^{L_3},y_2^{L_3},y_3^{L_3})\mapsto 11\epsilon_{s_1s_2s_3}^{P_3}-5\epsilon_{s_3s_2s_3}^{P_3}-2\epsilon_{s_4s_2s_3}^{P_3},$$
$$e_1(y_4^{L_3},y_5^{L_3})\mapsto \epsilon_{s_3}^{P_3},$$
$$e_2(y_4^{L_3},y_5^{L_3})\mapsto -2\epsilon_{s_2s_3}^{P_3}+7\epsilon_{s_4s_3}^{P_3}.$$
(d) ($L_4$):  Let 
$$y_1^{L_4}=\Theta_1-\Theta_4,\ y_2^{L_4}=-\Theta_1+\Theta_2-\Theta_4,\ y_3^{L_4}=-\Theta_2+2\Theta_3-\Theta_4,\ y_4^{L_4}=\Theta_4.$$
Then,
$$\Rep^\C_{\poly}(L_4)
=\C_{\sym}\left[(y_1^{L_4})^2, (y_2^{L_4})^2, (y_3^{L_4})^2 \right] \otimes_\C \C[y_4^{L_4}].$$
Further, the  generators of $\Rep^\C_{\poly}(L_4)$ 
under $\xi^{P_4}$ go to:
$$e_1\left((y_1^{L_4})^2,(y_2^{L_4})^2,(y_3^{L_4})^2\right)\mapsto -\epsilon_{s_3s_4}^{P_4},$$
$$e_2\left((y_1^{L_4})^2,(y_2^{L_4})^2,(y_3^{L_4})^2\right)\mapsto 7\epsilon_{s_1s_2s_3s_4}^{P_4}-5\epsilon_{s_3s_2s_3s_4}^{P_4},$$
$$e_3\left((y_1^{L_4})^2,(y_2^{L_4})^2,(y_3^{L_4})^2\right)\mapsto -2\epsilon_{s_2s_3s_1s_2s_3s_4}^{P_4}+3\epsilon_{s_4s_3s_1s_2s_3s_4}^{P_4},$$
$$y_4^{L_4}\mapsto \epsilon_{s_4}^{P_4}.$$
\end{theorem}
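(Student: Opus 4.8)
The plan is to unwind the definition $\xi^{P_r}=\beta^{P_r}\circ(\theta_{\omega_4}(P_r)^*)^{-1}$ from Theorem \ref{thmmain} and thereby reduce the whole assertion to a cup-product computation in $H^*(F_4/B,\C)$. The key observation is that the coordinate functions $\Theta_1,\dots,\Theta_4$ on $T$ of Lemma \ref{lem4.1} are exactly the pullbacks $(\theta_{\omega_4}|_T)^*(\omega_i)$: under the identification \eqref{neweqn6}, the $i$-th coordinate of $\bar t\in\ft$ in the basis $\{\alpha_1^\vee,\dots,\alpha_4^\vee\}$, read as a linear form on $\ft$, is the fundamental weight $\omega_i$ (because $\omega_i(\alpha_j^\vee)=\delta_{ij}$), while $\theta_{\omega_4}(t)=\sum_i\Theta_i(t)\,\alpha_i^\vee$. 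Hence, once a generator $\chi$ of $\Rep^\C_{\poly}(L_r)$ is written as a $W_r$-invariant polynomial $F(\Theta_1,\dots,\Theta_4)$, we get $(\theta_{\omega_4}(P_r)^*)^{-1}(\chi)=F(\omega_1,\dots,\omega_4)\in S(\ft^*)^{W_r}$, so by \eqref{eqnborel}
$$\xi^{P_r}(\chi)=\beta^{P_r}\bigl(F(\omega_1,\dots,\omega_4)\bigr)=F(\epsilon_{s_1},\dots,\epsilon_{s_4}),$$
the right-hand side being evaluated in $H^*(F_4/B,\C)$ and then viewed inside $H^*(F_4/P_r,\C)\simeq H^*(F_4/B,\C)^{W_r}$ via $\pi^*$. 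The theorem thus splits into two tasks: (i) identify $S(\ft^*)^{W_r}$ together with explicit generators expressed in the $\Theta_i$; and (ii) substitute $\Theta_i\leadsto\epsilon_{s_i}$, expand, and read off the coordinates in the Schubert basis $\{\epsilon^{P_r}_w\}_{w\in W^{P_r}}$.

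For (i): $W_r$ is generated by $\{s_i:i\neq r\}$, and I would first compute the action of these reflections on $\ft^*$ in the simple-root basis --- using \cite[Proposition 12]{Ku2}, since each Levi subgroup of $F_4$ is of classical type --- and then pass to the fundamental-weight basis by the Cartan matrix of $F_4$, as described above for the exceptional groups. For $r=1,4$ the group $W_r$ is the rank-$3$ hyperoctahedral group (of type $C_3$, resp.\ $B_3$), acting by signed permutations on a rank-$3$ subspace of $\ft^*$ and fixing a complementary line; hence $S(\ft^*)^{W_r}$ is the polynomial ring $\C_{\sym}[(y_1^{L_r})^2,(y_2^{L_r})^2,(y_3^{L_r})^2]\otimes_\C\C[y_4^{L_r}]$, where $y_1^{L_r},y_2^{L_r},y_3^{L_r}$ are the signed-permutation coordinates and $y_4^{L_r}$ spans the fixed line; this yields (a) and (d). For $r=2,3$ the group $W_r$ is $S_2\times S_3$ (resp.\ $S_3\times S_2$), acting on two coordinate blocks, so $S(\ft^*)^{W_r}$ is the tensor product of the two symmetric-function subalgebras modulo the single linear relation equating their degree-$1$ symmetric functions --- this relation coming from the one dependency among the five block-coordinates inside the rank-$4$ space $\ft^*$ --- which gives the presentations in (b) and (c). One must also check that the listed $y_j^{L_r}$ \emph{generate} the invariant ring and not merely a subring; this follows by verifying that they span the appropriate sublattice of $\ft^*$, or by a Poincar\'e-series comparison using the degrees of the basic invariants of $W_r$.

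For (ii): in each elementary-symmetric generator obtained in (i), replace $\Theta_i$ by $\epsilon_{s_i}$ and expand the cup products in $H^*(F_4/B,\Z)$ using Duan's multiplicative formula (Theorem \ref{duan2005multiplicative}) --- for every $w\in W^{P_r}$ of the relevant length, fix a reduced word, form the matrix $A_w$, and extract the structure constant $c^w_{u,v}$ by applying $T_{A_w}$ --- and then collect terms; since $F(\omega_1,\dots,\omega_4)$ is $W_r$-invariant, only Schubert classes indexed by $w\in W^{P_r}$ can occur, and by $\epsilon^{P_r}_w=\pi^*(\epsilon_w)$ these are exactly the $\epsilon^{P_r}_w$. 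The resulting coefficients are the entries listed in (a)--(d). This is where the genuine obstacle lies: the computation is routine in principle but unmanageable by hand because of the number of Schubert classes and the reduced-word bookkeeping involved, so it is carried out by the computer program \cite{X}, built on the algorithms of \cite{blee} (for $W^{P_r}$ and reduced words) and \cite{D} (for the operator $T_A$).

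Finally, a few low-degree entries can be verified directly as consistency checks, which also pin down normalizations: e.g.\ $y_4^{L_1}=\Theta_1$ is already $W_1$-invariant, so $\xi^{P_1}(y_4^{L_1})=\beta^{P_1}(\omega_1)=\epsilon^{P_1}_{s_1}$; and $e_1\bigl((y_1^{L_1})^2,(y_2^{L_1})^2,(y_3^{L_1})^2\bigr)=\sum_j(y_j^{L_1})^2$ becomes, after substitution, a $W_1$-invariant degree-$2$ polynomial in the $\epsilon_{s_i}$ whose Schubert expansion must be the single class $-\epsilon^{P_1}_{s_2s_1}$. In particular such checks confirm the absence here of the factor $\tfrac12$ that occurs for $D_n$ in Lemma \ref{epsilon s_j}, since for $F_4$ one has $\beta(\omega_i)=\epsilon_{s_i}$ directly; analogous spot-checks apply to the generators of $\Rep^\C_{\poly}(L_2),\Rep^\C_{\poly}(L_3),\Rep^\C_{\poly}(L_4)$.
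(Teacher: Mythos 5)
Your proposal is correct and follows essentially the same route as the paper: the identification $\Theta_i=(\theta_{\omega_4}|_T)^*(\omega_i)$ (implicit in the paper's choice of coroot coordinates), the determination of $S(\ft^*)^{W_r}$ via \cite[Proposition 12]{Ku2} and the Cartan matrix, the substitution $\omega_i\mapsto\epsilon_{s_i}$ through the Borel homomorphism \eqref{eqnborel}, and the cup-product expansion by Duan's formula carried out in the computer program \cite{X}. The paper offers no further detail beyond this recipe, so your write-up is a faithful (indeed more explicit) account of its proof.
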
       

\section{$E_6$}
We use $\{\omega_1, \dots,\omega_6\}$ as a basis for $\mf[t]^*$. This gives rise to a coordinate system on 
$T$ as earlier using the identity \eqref{neweqn6}.

\begin{lemma} \label{lem5.1} \begin{equation}
    \theta_{\omega_1}(t_1, \dots , t_6)= \left(\Theta_1,\dots, \Theta_6\right),
   \end{equation} 
   where 
   \begin{align*}\Theta_1(t)= & \frac{1}{6}\big( 4t_1^{1}+t_1^{-1}t_3^{1}+t_3^{-1}t_4^{1}+t_2^{1}t_4^{-1}t_5^{1}+t_2^{-1}t_5^{1}+t_2^{1}t_5^{-1}t_6^{1}
+t_2^{-1}t_4^{1}t_5^{-1}t_6^{1}\\
&+t_2^{1}t_6^{-1}+t_3^{1}t_4^{-1}t_6^{1}+t_2^{-1}t_4^{1}t_6^{-1}+t_1^{1}t_3^{-1}t_6^{1}+t_3^{1}t_4^{-1}t_5^{1}t_6^{-1}
-2t_1^{-1}t_6^{1}
+t_1^{1}t_3^{-1}t_5^{1}t_6^{-1}\\
&+t_3^{1}t_5^{-1}-2t_1^{-1}t_5^{1}t_6^{-1}+t_1^{1}t_3^{-1}t_4^{1}t_5^{-1}-2t_1^{-1}t_4^{1}t_5^{-1}
+t_1^{1}t_2^{1}t_4^{-1}
-2t_1^{-1}t_2^{1}t_3^{1}t_4^{-1}\\
&+t_1^{1}t_2^{-1}-2t_1^{-1}t_2^{-1}t_3^{1}-2t_2^{1}t_3^{-1}-2t_2^{-1}t_3^{-1}t_4^{1}
-2t_4^{-1}t_5^{1}-2t_5^{-1}t_6^{1}-2t_6^{-1} \big).
\end{align*}
\begin{align*}
    \Theta_2(t)= & \frac{1}{6}\big( 3t_1^{1}+3t_1^{-1}t_3^{1}+3t_3^{-1}t_4^{1}+3t_2^{1}t_4^{-1}t_5^{1}+3t_2^{1}t_5^{-1}t_6^{1}+3t_2^{1}t_6^{-1}
-3t_1^{1}t_2^{-1}
-3t_1^{-1}t_2^{-1}t_3^{1}\\
&-3t_2^{-1}t_3^{-1}t_4^{1}-3t_4^{-1}t_5^{1}-3t_5^{-1}t_6^{1}-3t_6^{-1} \big).
\end{align*}
\begin{align*}
    \Theta_3(t)= & \frac{1}{6}\big( 5t_1^{1}+5t_1^{-1}t_3^{1}+2t_3^{-1}t_4^{1}+2t_2^{1}t_4^{-1}t_5^{1}+2t_2^{-1}t_5^{1}+2t_2^{1}t_5^{-1}t_6^{1}\\
&+2t_2^{-1}t_4^{1}t_5^{-1}t_6^{1}+2t_2^{1}t_6^{-1}+2t_3^{1}t_4^{-1}t_6^{1}+2t_2^{-1}t_4^{1}t_6^{-1}-t_1^{1}t_3^{-1}t_6^{1}+2t_3^{1}t_4^{-1}t_5^{1}t_6^{-1}\\
&-t_1^{-1}t_6^{1}-t_1^{1}t_3^{-1}t_5^{1}t_6^{-1}+2t_3^{1}t_5^{-1}-t_1^{-1}t_5^{1}t_6^{-1}-t_1^{1}t_3^{-1}t_4^{1}t_5^{-1}-t_1^{-1}t_4^{1}t_5^{-1}\\
&-t_1^{1}t_2^{1}t_4^{-1}-t_1^{-1}t_2^{1}t_3^{1}t_4^{-1}-t_1^{1}t_2^{-1}-t_1^{-1}t_2^{-1}t_3^{1}-4t_2^{1}t_3^{-1}-4t_2^{-1}t_3^{-1}t_4^{1}\\
&-4t_4^{-1}t_5^{1}-4t_5^{-1}t_6^{1}-4t_6^{-1} \big).
\end{align*}
\begin{align*}
    \Theta_4(t)= & \frac{1}{6}\big( 6t_1^{1}+6t_1^{-1}t_3^{1}+6t_3^{-1}t_4^{1}+3t_2^{1}t_4^{-1}t_5^{1}+3t_2^{-1}t_5^{1}+3t_2^{1}t_5^{-1}t_6^{1}\\
&+3t_2^{-1}t_4^{1}t_5^{-1}t_6^{1}+3t_2^{1}t_6^{-1}+3t_2^{-1}t_4^{1}t_6^{-1}-3t_1^{1}t_2^{1}t_4^{-1}-3t_1^{-1}t_2^{1}t_3^{1}t_4^{-1}-3t_1^{1}t_2^{-1}\\
&-3t_1^{-1}t_2^{-1}t_3^{1}-3t_2^{1}t_3^{-1}-3t_2^{-1}t_3^{-1}t_4^{1}-6t_4^{-1}t_5^{1}-6t_5^{-1}t_6^{1}-6t_6^{-1} \big).
\end{align*}
\begin{align*}
    \Theta_5(t)= & \frac{1}{6}\big( 4t_1^{1}+4t_1^{-1}t_3^{1}+4t_3^{-1}t_4^{1}+4t_2^{1}t_4^{-1}t_5^{1}+4t_2^{-1}t_5^{1}+t_2^{1}t_5^{-1}t_6^{1}\\
&+t_2^{-1}t_4^{1}t_5^{-1}t_6^{1}+t_2^{1}t_6^{-1}+t_3^{1}t_4^{-1}t_6^{1}+t_2^{-1}t_4^{1}t_6^{-1}+t_1^{1}t_3^{-1}t_6^{1}+t_3^{1}t_4^{-1}t_5^{1}t_6^{-1}\\
&+t_1^{-1}t_6^{1}+t_1^{1}t_3^{-1}t_5^{1}t_6^{-1}-2t_3^{1}t_5^{-1}+t_1^{-1}t_5^{1}t_6^{-1}-2t_1^{1}t_3^{-1}t_4^{1}t_5^{-1}-2t_1^{-1}t_4^{1}t_5^{-1}\\
&-2t_1^{1}t_2^{1}t_4^{-1}-2t_1^{-1}t_2^{1}t_3^{1}t_4^{-1}-2t_1^{1}t_2^{-1}-2t_1^{-1}t_2^{-1}t_3^{1}-2t_2^{1}t_3^{-1}-2t_2^{-1}t_3^{-1}t_4^{1}\\
&-2t_4^{-1}t_5^{1}-5t_5^{-1}t_6^{1}-5t_6^{-1} \big).
\end{align*}
\begin{align*}
    \Theta_6(t)= & \frac{1}{6}\big( 2t_1^{1}+2t_1^{-1}t_3^{1}+2t_3^{-1}t_4^{1}+2t_2^{1}t_4^{-1}t_5^{1}+2t_2^{-1}t_5^{1}+2t_2^{1}t_5^{-1}t_6^{1}\\
&+2t_2^{-1}t_4^{1}t_5^{-1}t_6^{1}-t_2^{1}t_6^{-1}+2t_3^{1}t_4^{-1}t_6^{1}-t_2^{-1}t_4^{1}t_6^{-1}+2t_1^{1}t_3^{-1}t_6^{1}-t_3^{1}t_4^{-1}t_5^{1}t_6^{-1}\\
&+2t_1^{-1}t_6^{1}-t_1^{1}t_3^{-1}t_5^{1}t_6^{-1}-t_3^{1}t_5^{-1}-t_1^{-1}t_5^{1}t_6^{-1}-t_1^{1}t_3^{-1}t_4^{1}t_5^{-1}-t_1^{-1}t_4^{1}t_5^{-1}\\
&-t_1^{1}t_2^{1}t_4^{-1}-t_1^{-1}t_2^{1}t_3^{1}t_4^{-1}-t_1^{1}t_2^{-1}-t_1^{-1}t_2^{-1}t_3^{1}-t_2^{1}t_3^{-1}-t_2^{-1}t_3^{-1}t_4^{1}\\
&-t_4^{-1}t_5^{1}-t_5^{-1}t_6^{1}-4t_6^{-1} \big).
\end{align*}
\end{lemma}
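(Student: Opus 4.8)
\emph{Proof proposal.} The statement is an explicit computation of $\theta_{\omega_1}|_{T}$, which by Lemma \ref{lemma1} is a morphism $T\to\ft$. The plan is first to recall, from \cite[Theorems 1, 2]{R}, an effective description of this restriction, and then to specialise it to $G=E_6$, $\lambda=\omega_1$. For the first step: by construction (Definition \ref{def1}), $\theta_{\omega_1}(t)=\pi_{\mathfrak g}\big(\rho_{\omega_1}(t)\big)$, where $\pi_{\mathfrak g}\colon\End(V(\omega_1))=\mathfrak g\oplus\mathfrak g^{\perp}\to\mathfrak g$ is the projection onto the first summand, $\mathfrak g^{\perp}$ being the orthogonal complement for the trace form $\langle A,B\rangle=\tr(AB)$. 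Since $\theta_{\omega_1}$ intertwines conjugation on $G$ with $\Ad$ on $\mathfrak g$ and $t$ is central in $T$, the value $\theta_{\omega_1}(t)$ is $\Ad(T)$-invariant, hence lies in $\mathfrak g^{\Ad(T)}=\ft$; as $\ft\subseteq\mathfrak g$ is orthogonal to $\mathfrak g^{\perp}$, applying the orthogonal projection $\pi_{\ft}\colon\End(V(\omega_1))\to\ft$ to $\rho_{\omega_1}(t)=\theta_{\omega_1}(t)+w$ with $w\in\mathfrak g^{\perp}$ gives $\theta_{\omega_1}(t)=\pi_{\ft}\big(\rho_{\omega_1}(t)\big)$. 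Now $\rho_{\omega_1}(t)$ is diagonal in a weight basis of $V(\omega_1)$ with eigenvalue $\mu(t)$ on the $\mu$-weight space, so, writing $\theta_{\omega_1}(t)=\sum_{j=1}^{6}\Theta_j(t)\,\alpha_j^{\vee}$, the conditions $\tr\!\big((\rho_{\omega_1}(t)-\theta_{\omega_1}(t))\,\alpha_i^{\vee}\big)=0$ for $1\le i\le 6$ become the linear system
\[
\sum_{j=1}^{6}G_{ij}\,\Theta_j(t)=\sum_{\mu}\mu(\alpha_i^{\vee})\,\mu(t),\qquad G_{ij}:=\sum_{\mu}\mu(\alpha_i^{\vee})\,\mu(\alpha_j^{\vee}),
\]
the sums being over the weights $\mu$ of $V(\omega_1)$ counted with multiplicity. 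The Gram matrix $G$ is the restriction to $\ft$ of the $\mathfrak g$-invariant trace form, hence a rational multiple of the Cartan matrix of $E_6$; in particular it is invertible, and $\big(\Theta_1(t),\dots,\Theta_6(t)\big)=G^{-1}\big(\sum_{\mu}\mu(\alpha_i^{\vee})\mu(t)\big)_{i=1}^{6}$.

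It then remains to specialise to $\lambda=\omega_1$, for which $V(\omega_1)$ is the $27$-dimensional minuscule representation of $E_6$: its $27$ weights form a single $W$-orbit and occur with multiplicity one. I would enumerate these weights, recording each $\mu$ by its fundamental-weight coordinates $\big(\mu(\alpha_1^{\vee}),\dots,\mu(\alpha_6^{\vee})\big)$, so that under the coordinates $t_i=t(\omega_i)$ coming from \eqref{neweqn6} one has $\mu(t)=\prod_{i}t_i^{\mu(\alpha_i^{\vee})}$; these are precisely the $27$ Laurent monomials occurring in each $\Theta_i$. Substituting into the right-hand side of the system above yields six integral Laurent polynomials, and inverting the fixed $6\times6$ rational matrix $G$ and multiplying produces $\Theta_1,\dots,\Theta_6$. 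This is a bounded but sizeable linear-algebra computation over the field of rational functions in $t_1,\dots,t_6$; it is the computation carried out by the program \cite{X}, whose output is the six Laurent polynomials (each with prefactor $\tfrac16$) displayed in Lemma \ref{lem5.1}.

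As consistency checks on the output I would record: $\theta_{\omega_1}(\mathbf 1)=0$, so the coefficients of each $\Theta_i$ must sum to zero; the linear part of $(\Theta_1,\dots,\Theta_6)$ at $t=\mathbf 1$ must be $\mathrm{id}_{\ft}$, since $\theta_{\omega_1}$ is a local diffeomorphism at $1$ with $d(\theta_{\omega_1})_1=\mathrm{id}_{\ft}$ (this is automatic from the fact that the matrix of monomial-coefficients of the $\Theta_i$ equals $G^{-1}$ times the transpose of the weight matrix $\big(\mu(\alpha_k^{\vee})\big)_{\mu,k}$, while $G$ is itself that weight matrix's Gram matrix); and the $W$-equivariance of $(\Theta_1,\dots,\Theta_6)$ on the simple reflections $s_1,\dots,s_6$. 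The only genuine difficulty is bookkeeping --- correctly listing the $27$ weights and running the linear solve without error --- which is precisely why the computation is delegated to \cite{X}; beyond the reduction furnished by \cite[Theorems 1, 2]{R} there is no conceptual obstacle.
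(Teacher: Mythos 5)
Your proposal is correct and follows essentially the same route as the paper: the paper likewise reduces the computation to the explicit description of $\theta_\lambda|_T$ in \cite[Theorems 1, 2]{R} --- which your linear system $\sum_j G_{ij}\,\Theta_j(t)=\sum_\mu \mu(\alpha_i^\vee)\,\mu(t)$ over the $27$ weights of the minuscule representation $V(\omega_1)$ reproduces --- and delegates the resulting enumeration and linear solve to the program \cite{X}. The only difference is that you re-derive that reduction from Definition \ref{def1} rather than citing it, and your stated consistency checks (each $\Theta_i$ has $27$ monomials with coefficients summing to zero) do hold for the displayed formulas.
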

Combining the equation \eqref{eqnborel} with Lemma \ref{lem5.1} and using Theorem \ref{duan2005multiplicative}, we get the following:
\begin{theorem}  The generators of $\Rep^\C_{\poly}(L_r)$ under $\xi^{P_r}$ are mapped as follows:

(a) ($L_1$): For $L_1$, we let 
$$y_1^{L_1}=3\Theta_{1},\ y_2^{L_1}=-\Theta_{1}-2\Theta_{2}+2\Theta_{3},\ y_3^{L_1}=-\Theta_{1}+2\Theta_{2}+2\Theta_{3}-2\Theta_{4},$$
$$y_4^{L_1}=-\Theta_{1}+2\Theta_{4}-2\Theta_{5},\ y_5^{L_1}=-\Theta_{1}+2\Theta_{5}-2\Theta_{6},\ y_6^{L_1}=-\Theta_{1}+2\Theta_{6}.$$
Then, 
$$\Rep^\C_{\poly}(L_1)
=\C[y_1^{L_1}]\otimes_\C\left[R_{6,1}\oplus (y_2^{L_1}y_3^{L_1}y_4^{L_1}y_5^{L_1}y_6^{L_1})R_{6,1}\right],$$
where $R_{6,1}=\C_{\sym}\left[(y_2^{L_1})^2, (y_3^{L_1})^2, (y_4^{L_1})^2 , (y_5^{L_1})^2, (y_6^{L_1})^2 \right].$\\
Further, the generators of $\Rep^\C_{\poly}(L_1)$ under $\xi^{P_1}$ go to 
$$y_1^{L_1}\to 3\epsilon_{s_1}^{P_1},$$
$$e_1\left((y_2^{L_1})^2, (y_3^{L_1})^2, (y_4^{L_1})^2, (y_5^{L_1})^2, (y_6^{L_1})^2\right)\mapsto -3\epsilon_{s_3s_1}^{P_1},$$
$$e_2\left((y_2^{L_1})^2, (y_3^{L_1})^2, (y_4^{L_1})^2, (y_5^{L_1})^2, (y_6^{L_1})^2\right)\mapsto -54\epsilon_{s_2s_4s_3s_1}^{P_1}+42\epsilon_{s_5s_4s_3s_1}^{P_1},$$
$$e_3\left((y_2^{L_1})^2, (y_3^{L_1})^2, (y_4^{L_1})^2, (y_5^{L_1})^2, (y_6^{L_1})^2\right)\mapsto -108\epsilon_{s_4s_5s_2s_4s_3s_1}^{P_1}+174\epsilon_{s_6s_5s_2s_4s_3s_1}^{P_1},$$
$$e_4\left((y_2^{L_1})^2, (y_3^{L_1})^2, (y_4^{L_1})^2, (y_5^{L_1})^2, (y_6^{L_1})^2\right)\mapsto 
42\epsilon_{s_1s_3s_4s_5s_2s_4s_3s_1}^{P_1}+291\epsilon_{s_6s_3s_4s_5s_2s_4s_3s_1}^{P_1}-519\epsilon_{s_5s_6s_4s_5s_2s_4s_3s_1}^{P_1},$$
$$e_5\left((y_2^{L_1})^2, (y_3^{L_1})^2, (y_4^{L_1})^2, (y_5^{L_1})^2, (y_6^{L_1})^2\right)\mapsto 333\epsilon_{s_5s_6s_1s_3s_4s_5s_2s_4s_3s_1}^{P_1}-180\epsilon_{s_4s_5s_6s_3s_4s_5s_2s_4s_3s_1}^{P_1}.$$
$$e_5\left(y_2^{L_1}, y_3^{L_1}, y_4^{L_1}, y_5^{L_1}, y_6^{L_1}\right)\mapsto 6\epsilon_{s_5s_2s_4s_3s_1}^{P_1} -21\epsilon_{s_6s_5s_4s_3s_1}^{P_1}$$

(b)  ($L_2$): For $L_2$, we let 
$$y_1^{L_2}=3\Theta_{1}-\Theta_{2},\ y_2^{L_2}=-3\Theta_{1}-\Theta_{2}+3\Theta_{3},\ y_3^{L_2}=-\Theta_{2}-3\Theta_{3}+3\Theta_{4},$$
$$y_4^{L_2}=2\Theta_{2}-3\Theta_{4}+3\Theta_{5},\ y_5^{L_2}=2\Theta_{2}-3\Theta_{5}+3\Theta_{6},\ y_6^{L_2}=2\Theta_{2}-3\Theta_{6}.$$
Then,
$$ \Rep^\C_{\poly}(L_2)
=\C_{\sym}[y_1^{L_2},y_2^{L_2}, y_3^{L_2}, y_4^{L_2}, y_5^{L_2}, y_6^{L_2}].$$
Further, the generators of $\Rep^\C_{\poly}(L_2)$ under $\xi^{P_2}$ go to 
$$e_1\left( y_1^{L_2}, y_2^{L_2}, y_3^{L_2}, y_4^{L_2}, y_5^{L_2}, y_6^{L_2}\right)\mapsto 3\epsilon_{s_2}^{P_2},$$
$$e_2\left( y_1^{L_2}, y_2^{L_2}, y_3^{L_2}, y_4^{L_2}, y_5^{L_2}, y_6^{L_2}\right)\mapsto 6\epsilon_{s_4s_2}^{P_2},$$
$$e_3\left( y_1^{L_2}, y_2^{L_2}, y_3^{L_2}, y_4^{L_2}, y_5^{L_2}, y_6^{L_2}\right)\mapsto 34\epsilon_{s_3s_4s_2}^{P_2} - 20\epsilon_{s_5s_4s_2}^{P_2},$$
$$e_4\left( y_1^{L_2}, y_2^{L_2}, y_3^{L_2}, y_4^{L_2}, y_5^{L_2}, y_6^{L_2}\right)\mapsto 141\epsilon_{s_1s_3s_4s_2}^{P_2} - 42\epsilon_{s_5s_3s_4s_2}^{P_2} + 60\epsilon_{s_6s_5s_4s_2}^{P_2},$$
$$e_5\left( y_1^{L_2}, y_2^{L_2}, y_3^{L_2}, y_4^{L_2}, y_5^{L_2}, y_6^{L_2}\right)\mapsto 
9\epsilon_{s_5s_1s_3s_4s_2}^{P_2} - 48\epsilon_{s_4s_5s_3s_4s_2}^{P_2} + 90\epsilon_{s_6s_5s_3s_4s_2}^{P_2},$$
$$e_6\left( y_1^{L_2}, y_2^{L_2}, y_3^{L_2}, y_4^{L_2}, y_5^{L_2}, y_6^{L_2}\right)\mapsto $$
$$-130\epsilon_{s_4s_5s_1s_3s_4s_2}^{P_2} + 411\epsilon_{s_6s_5s_1s_3s_4s_2}^{P_2} + 56\epsilon_{s_2s_4s_5s_3s_4s_2}^{P_2} -76\epsilon_{s_6s_4s_5s_3s_4s_2}^{P_2}.$$

(c)  ($L_3$): For $L_3$, we let 
$$y_1^{L_3}=5\Theta_{1}-\Theta_{3},\ y_2^{L_3}=-5\Theta_{1}+4\Theta_{3},\ y_3^{L_3}=-4\Theta_{2}+3\Theta_{3},\ y_4^{L_3}=4\Theta_{2}+3\Theta_{3}-4\Theta_{4},$$
$$y_5^{L_3}=-\Theta_{3}+4\Theta_{4}-4\Theta_{5},\ y_6^{L_3}=-\Theta_{3}+4\Theta_{5}-4\Theta_{6},\ y_7^{L_3}=-\Theta_{3}+4\Theta_{6}.$$
Then,
$$ \Rep^\C_{\poly}(L_3)
=\C_{\sym}[y_1^{L_3},y_2^{L_3}] \otimes_\C \C_{\sym}[y_3^{L_3}, y_4^{L_3}, y_5^{L_3}, y_6^{L_3}, y_7^{L_3}]/R_{6,3},$$
where $R_{6,3}=y_1^{L_3}+y_2^{L_3}-y_3^{L_3}-y_4^{L_3}-y_5^{L_3}-y_6^{L_3}-y_7^{L_3}$.\\
Further, the generators of $\Rep^\C_{\poly}(L_3)$ under $\xi^{P_3}$ go to 
$$e_1\left( y_1^{L_3}, y_2^{L_3}\right)\mapsto 3\epsilon_{s_3}^{P_3},$$
$$e_2\left( y_1^{L_3}, y_2^{L_3}\right)\mapsto 21\epsilon_{s_1s_3}^{P_3} - 4\epsilon_{s_4s_3}^{P_3},$$
$$e_1\left( y_3^{L_3}, y_4^{L_3}, y_5^{L_3}, y_6^{L_3}, y_7^{L_3}\right)\mapsto 4\epsilon_{s_3}^{P_3},$$
$$e_2\left( y_3^{L_3}, y_4^{L_3}, y_5^{L_3}, y_6^{L_3}, y_7^{L_3}\right)\mapsto -6\epsilon_{s_1s_3}^{P_3} +10\epsilon_{s_4s_3}^{P_3},$$
$$e_3\left( y_3^{L_3}, y_4^{L_3}, y_5^{L_3}, y_6^{L_3}, y_7^{L_3}\right)\mapsto -4\epsilon_{s_4s_1s_3}^{P_3} -58\epsilon_{s_2s_4s_3}^{P_3} +70\epsilon_{s_5s_4s_3}^{P_3},$$
$$e_4\left( y_3^{L_3}, y_4^{L_3}, y_5^{L_3}, y_6^{L_3}, y_7^{L_3}\right)\mapsto$$ 
$$-31\epsilon_{s_2s_4s_1s_3}^{P_3} - 38\epsilon_{s_3s_4s_1s_3}^{P_3} + 31\epsilon_{s_5s_4s_1s_3}^{P_3} -118\epsilon_{s_5s_2s_4s_3}^{P_3} +325\epsilon_{s_6s_5s_4s_3}^{P_3}.$$
$$e_5\left( y_3^{L_3}, y_4^{L_3}, y_5^{L_3}, y_6^{L_3}, y_7^{L_3}\right)\mapsto$$
$$35\epsilon_{s_3s_2s_4s_1s_3}^{P_3} - 40\epsilon_{s_5s_2s_4s_1s_3}^{P_3} - 93\epsilon_{s_5s_3s_4s_1s_3}^{P_3}492\epsilon_{s_6s_5s_4s_1s_3}^{P_3} +78\epsilon_{s_4s_5s_2s_4s_3}^{P_3} -331\epsilon_{s_6s_5s_2s_4s_3}^{P_3}.$$

(d)  ($L_4$): For $L_4$, we let 
$$y_1^{L_4}=6\Theta_{1}-\Theta_{4},\ y_2^{L_4}=-6\Theta_{1}+6\Theta_{3}-\Theta_{4},\ y_3^{L_4}=-6\Theta_{3}+5\Theta_{4},\ y_4^{L_4}=-\Theta_{4}+6\Theta_{6},$$
$$y_5^{L_4}=-\Theta_{4}+6\Theta_{5}-6\Theta_{6},\ y_6^{L_4}=5\Theta_{4}-6\Theta_{5},\ y_7^{L_4}=3\Theta_{2}-\Theta_{4}, y_8^{L_4}=-3\Theta_{2}+2\Theta_{4}.$$
Then,
$$ \Rep^\C_{\poly}(L_4)
=\C_{\sym}[y_1^{L_4},y_2^{L_4},y_3^{L_4}] \otimes_\C \C_{\sym}[y_4^{L_4}, y_5^{L_4}, y_6^{L_4}] \otimes_\C \C_{\sym}[y_7^{L_4}, y_8^{L_4}]/(R_{6,4},R_{6,4}'),$$
where $R_{6,4}=y_1^{L_4}+y_2^{L_4}+y_3^{L_4}-3y_7^{L_4}-3y_8^{L_4}$, and $R_{6,4}'=y_4^{L_4}+y_5^{L_4}+y_6^{L_4}-3y_7^{L_4}-3y_8^{L_4}$.\\
Further, the generators of $\Rep^\C_{\poly}(L_4)$ under $\xi^{P_4}$ go to 
$$e_1\left( y_1^{L_4}, y_2^{L_4}, y_3^{L_4}\right)\mapsto 3\epsilon_{s_4}^{P_4},$$
$$e_2\left( y_1^{L_4}, y_2^{L_4}, y_3^{L_4}\right)\to -9\epsilon_{s_2s_4}^{P_4} + 27\epsilon_{s_3s_4}^{P_4} -9\epsilon_{s_5s_4}^{P_4},$$
$$e_3\left( y_1^{L_4}, y_2^{L_4}, y_3^{L_4}\right)\mapsto
-26\epsilon_{s_3s_2s_4}^{P_4} + 10\epsilon_{s_5s_2s_4}^{P_4} + 185\epsilon_{s_1s_3s_4}^{P_4} -26\epsilon_{s_5s_3s_4}^{P_4} + 5\epsilon_{s_6s_5s_4}^{P_4},$$
$$e_1\left( y_4^{L_4}, y_5^{L_4}, y_6^{L_4}\right)\mapsto 3\epsilon_{s_4}^{P_4},$$
$$e_2\left( y_4^{L_4}, y_5^{L_4}, y_6^{L_4}\right)\mapsto -9\epsilon_{s_2s_4}^{P_4} - 9\epsilon_{s_3s_4}^{P_4} +27\epsilon_{s_5s_4}^{P_4},$$
$$e_3\left( y_4^{L_4}, y_5^{L_4}, y_6^{L_4}\right)\mapsto
10\epsilon_{s_3s_2s_4}^{P_4} - 26\epsilon_{s_5s_2s_4}^{P_4} + 5\epsilon_{s_1s_3s_4}^{P_4} -26\epsilon_{s_5s_3s_4}^{P_4} + 185\epsilon_{s_6s_5s_4}^{P_4},$$
$$e_1\left( y_7^{L_4}, y_8^{L_4}\right)\mapsto \epsilon_{s_4}^{P_4},$$
$$e_2\left( y_7^{L_4}, y_8^{L_4}\right)\mapsto 7\epsilon_{s_2s_4}^{P_4} -2\epsilon_{s_3s_4}^{P_4} -2\epsilon_{s_5s_4}^{P_4}.$$

(e)  ($L_5$): For $L_5$, we let 
$$y_1^{L_5}=-\Theta_{5}+5\Theta_{6},\ y_2^{L_5}=4\Theta_{5}-5\Theta_{6},\ y_3^{L_5}=-4\Theta_{2}+3\Theta_{5},\ y_4^{L_5}=4\Theta_{2}-4\Theta_{4}+3\Theta_{5},$$
$$y_5^{L_5}=-4\Theta_{3}+4\Theta_{4}-\Theta_{5},\ y_6^{L_5}=-4\Theta_{1}+4\Theta_{3}-\Theta_{5},\ y_7^{L_5}=4\Theta_{1}-\Theta_{5}.$$
Then,
$$  \Rep^\C_{\poly}(L_5)
=\C_{\sym}[y_1^{L_5},y_2^{L_5}] \otimes_\C \C_{\sym}[y_3^{L_5}, y_4^{L_5}, y_5^{L_5}, y_6^{L_5}, y_7^{L_5}]/R_{6,5},$$
where $R_{6,5}=y_1^{L_5}+y_2^{L_5}-y_3^{L_5}-y_4^{L_5}- y_5^{L_5}- y_6^{L_5}- y_7^{L_5} $.\\
Further, the generators of $\Rep^\C_{\poly}(L_5)$ under $\xi^{P_5}$ go to 
$$e_1\left( y_1^{L_5}, y_2^{L_5}\right)\mapsto 3\epsilon_{s_5}^{P_5},$$
$$e_2\left( y_1^{L_5}, y_2^{L_5}\right)\mapsto -4\epsilon_{s_4s_5}^{P_5} + 21\epsilon_{s_6s_5}^{P_5},$$
$$e_1\left( y_3^{L_5}, y_4^{L_5}, y_5^{L_5}, y_6^{L_5}, y_7^{L_5}\right)\mapsto 3\epsilon_{s_5}^{P_5},$$
$$e_2\left( y_3^{L_5}, y_4^{L_5}, y_5^{L_5}, y_6^{L_5}, y_7^{L_5}\right)\mapsto 10\epsilon_{s_4s_5}^{P_5} -6 \epsilon_{s_6s_5}^{P_5},$$
$$e_3\left( y_3^{L_5}, y_4^{L_5}, y_5^{L_5}, y_6^{L_5}, y_7^{L_5}\right)\mapsto -58\epsilon_{s_2s_4s_5}^{P_5} +70 \epsilon_{s_3s_4s_5}^{P_5} -4\epsilon_{s_6s_4s_5}^{P_5},$$
$$e_4\left( y_3^{L_5}, y_4^{L_5}, y_5^{L_5}, y_6^{L_5}, y_7^{L_5}\right)\mapsto$$
$$-118\epsilon_{s_3s_2s_4s_5}^{P_5} +31 \epsilon_{s_6s_2s_4s_5}^{P_5} +325\epsilon_{s_1s_3s_4s_5}^{P_5} +31\epsilon_{s_6s_3s_4s_5}^{P_5} -38\epsilon_{s_5s_6s_4s_5}^{P_5},$$
$$e_5\left( y_3^{L_5}, y_4^{L_5}, y_5^{L_5}, y_6^{L_5}, y_7^{L_5}\right)\mapsto$$
$$-331\epsilon_{s_1s_3s_2s_4s_5}^{P_5} +78 \epsilon_{s_4s_3s_2s_4s_5}^{P_5} -40\epsilon_{s_6s_3s_2s_4s_5}^{P_5} +35\epsilon_{s_5s_6s_2s_4s_5}^{P_5} +492\epsilon_{s_6s_1s_3s_4s_5}^{P_5} -93\epsilon_{s_5s_6s_3s_4s_5}^P.$$
(f)  ($L_6$): For $L_6$, we let
$$y_1^{L_6}=3\Theta_{6},\ y_2^{L_6}=2\Theta_{1}-\Theta_{6},\ y_3^{L_6}=-2\Theta_{1}+2\Theta_{3}-\Theta_{6},$$
$$y_4^{L_6}=-2\Theta_{3}+2\Theta_{4}-\Theta_{6},\ y_5^{L_6}=2\Theta_{2}-2\Theta_{4}+2\Theta_{5}-\Theta_{6},\ y_6^{L_6}=-2\Theta_{2}+2\Theta_{5}-\Theta_{6}.$$
Then,
$$  \Rep^\C_{\poly}(L_6)
=\C[y_1^{L_6}]\otimes_\C\left[R\oplus (y_2^{L_6}y_3^{L_6}y_4^{L_6}y_5^{L_6}y_6^{L_6})R\right],$$
where $R=\C_{\sym}\left[(y_2^{L_6})^2, (y_3^{L_6})^2, (y_4^{L_6})^2 , (y_5^{L_6})^2, (y_6^{L_6})^2 \right].$\\
Further, the generators of $\Rep^\C_{\poly}(L_6)$ under $\xi^{P_6}$ go to 
$$y_1^{L_6}\mapsto 3\epsilon_{s_6}^{P_6},$$
$$e_1\left((y_2^{L_6})^2, (y_3^{L_6})^2, (y_4^{L_6})^2 , (y_5^{L_6})^2, (y_6^{L_6})^2\right)\mapsto -5\epsilon_{s_5s_6}^{P_6},$$
$$e_2\left((y_2^{L_6})^2, (y_3^{L_6})^2, (y_4^{L_6})^2 , (y_5^{L_6})^2, (y_6^{L_6})^2\right)\mapsto -54\epsilon_{s_2s_4s_5s_6}^{P_6}+4254\epsilon_{s_3s_4s_5s_6}^{P_6},$$
$$e_3\left((y_2^{L_6})^2, (y_3^{L_6})^2, (y_4^{L_6})^2 , (y_5^{L_6})^2, (y_6^{L_6})^2\right)\mapsto
174\epsilon_{s_1s_3s_2s_4s_5s_6}^{P_6}-108\epsilon_{s_4s_3s_2s_4s_5s_6}^{P_6},$$
$$e_4\left((y_2^{L_6})^2, (y_3^{L_6})^2, (y_4^{L_6})^2 , (y_5^{L_6})^2, (y_6^{L_6})^2\right)\mapsto $$
$$-519\epsilon_{s_3s_4s_1s_3s_2s_4s_5s_6}^{P_6}+291\epsilon_{s_5s_4s_1s_3s_2s_4s_5s_6}^{P_6}+42\epsilon_{s_6s_5s_4s_3s_2s_4s_5s_6}^{P_6},$$
$$e_5\left((y_2^{L_6})^2, (y_3^{L_6})^2, (y_4^{L_6})^2 , (y_5^{L_6})^2, (y_6^{L_6})^2\right)\mapsto
-180\epsilon_{s_4s_5s_3s_4s_1s_3s_2s_4s_5s_6}^{P_6}+333\epsilon_{s_6s_5s_3s_4s_1s_3s_2s_4s_5s_6}^{P_6},$$
\end{theorem}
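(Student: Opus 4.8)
The plan is to verify the formulas by a direct computation built from three ingredients already in place above: the factorization $\xi^{P_r}=\beta^{P_r}\circ\bigl({\theta_{\omega_1}(P_r)}^*\bigr)^{-1}$ from Theorem~\ref{thmmain}; the explicit Springer morphism ${\theta_{\omega_1}}_{|T}$ of Lemma~\ref{lem5.1}, together with the Borel identity $\beta(\omega_i)=\epsilon_{s_i}$ of \eqref{eqnborel}; and Duan's multiplicativity formula, Theorem~\ref{duan2005multiplicative}, which evaluates arbitrary products of Schubert divisors in $H^*(E_6/B,\C)$. Since $(\theta_{\omega_1})^*$ carries the $i$-th coordinate function $\omega_i\in\ft^*$ on $\ft$ to the function $\Theta_i$ on $T$, a $W_r$-invariant polynomial $P(\omega_1,\dots,\omega_6)$ corresponds to $P(\Theta_1,\dots,\Theta_6)\in\Rep^\C_{\poly}(L_r)$, and, because $\beta$ is a $W$-equivariant ring map,
\[
\xi^{P_r}\bigl(P(\Theta_1,\dots,\Theta_6)\bigr)=\beta^{P_r}\bigl(P(\omega_1,\dots,\omega_6)\bigr)=P(\epsilon_{s_1},\dots,\epsilon_{s_6}),
\]
computed in $H^*(E_6/B,\C)^{W_r}\cong H^*(E_6/P_r,\C)$.

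First I would determine $S(\ft^*)^{W_r}$ for each $1\le r\le 6$. The Levi $L_r$ has Dynkin diagram obtained by deleting the $r$-th node of $E_6$, so it is of type $D_5$ (for $r=1,6$), $A_5$ ($r=2$), $A_1\times A_4$ ($r=3$), $A_2\times A_1\times A_2$ ($r=4$), and $A_4\times A_1$ ($r=5$); in particular every Levi factor is of classical type. Writing the $W_r$-action on $\ft^*$ in the simple-root basis via \cite[Proposition~12]{Ku2} (as in Proposition~\ref{rep ring D} for the type-$D$ factor) and converting to the fundamental-weight basis through the $E_6$ Cartan matrix, the classical description of invariant rings---symmetric polynomials for each type-$A$ factor, and symmetric polynomials in the squares together with the product-of-all monomial for the $D_5$ factor; cf.\ also \cite[$\S$9]{Ku2}---produces the linear forms $y^{L_r}_i$ (written in the $\Theta_i$ via ${\theta_{\omega_1}(P_r)}^*$) and the relations $R_{6,r}$ recorded in the statement, hence the claimed presentations of $\Rep^\C_{\poly}(L_r)$.

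Next I would compute the image of each generator. By the displayed identity this reduces to expanding an explicit product $\epsilon_{s_{i_1}}\cdots\epsilon_{s_{i_k}}$ of Schubert divisors in $H^*(E_6/B,\C)$ in the Schubert basis $\{\epsilon_w\}_{w\in W}$, carried out by iterated application of Theorem~\ref{duan2005multiplicative}. Since $P$ is $W_r$-invariant the resulting class lies in $H^*(E_6/B,\C)^{W_r}$, which the projection $E_6/B\to E_6/P_r$ identifies with $H^*(E_6/P_r,\C)$, the class $\epsilon_w$ corresponding to $\epsilon^{P_r}_w$ for $w\in W^{P_r}$; its coordinates along $W^{P_r}$ are then the asserted coefficients. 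This forces the output to be supported on $W^{P_r}$ and to lie in the correct cohomological degree, which serves as a built-in consistency check (as does the surjectivity of $\xi^{P_r}$ from Theorem~\ref{thmmain}: the images must span $H^*(E_6/P_r,\C)$). In practice these manipulations---storing $W(E_6)$, running Duan's algorithm, and doing the linear algebra over $\Q$---are performed by the computer program \cite{X}, which is built on the algorithms of \cite{blee} and \cite{D}.

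The conceptual content---the factorization of $\xi^{P_r}$ and the invariant-theory identifications---is routine; the genuine obstacle is the size of the computation. The flag variety $E_6/B$ has $|W(E_6)|=51840$ Schubert classes and dimension $36$, and the top-degree generators (for instance $e_5$ of the squares in the $D_5$ cases $r=1,6$) require multiplying ten Schubert divisors, so the calculation is only feasible with an efficient implementation of Duan's formula and careful tracking of the changes of basis; that is where essentially all the effort goes.
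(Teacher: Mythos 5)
Your proposal follows essentially the same route as the paper: the paper's entire argument is to combine the Borel identity \eqref{eqnborel} with the explicit Springer morphism of Lemma \ref{lem5.1}, determine $S(\ft^*)^{W_r}$ from the classical invariant theory of the Levi factors via \cite[Proposition 12]{Ku2}, and evaluate the resulting products of Schubert divisors with Theorem \ref{duan2005multiplicative} through the computer program \cite{X}. Your reduction $\xi^{P_r}\bigl(P(\Theta_1,\dots,\Theta_6)\bigr)=P(\epsilon_{s_1},\dots,\epsilon_{s_6})$ and your identification of the Levi types are exactly what the authors do, so the proposal is correct and matches the paper's proof.
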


\section{$E_7$}
We use $\{\omega_1, \dots,\omega_7\}$ as a basis for $\mf[t]^*$. This gives rise to a coordinate system on 
$T$ as earlier using the identity \eqref{neweqn6}.

\begin{lemma} \label{lem6.1} \begin{equation}
    \theta_{\omega_7}(t_1, \dots , t_7)= \left(\Theta_1,\dots, \Theta_7\right),
   \end{equation} 
   where 
\begin{align*}
    \Theta_1(t)=&\frac{1}{24}\big( 2t_7^{1}+2t_6^{1}t_7^{-1}+2t_5^{1}t_6^{-1}+2t_4^{1}t_5^{-1}+2t_2^{1}t_3^{1}t_4^{-1}
+2t_2^{-1}t_3^{1}+2t_1^{1}t_2^{1}t_3^{-1}
\\
&+2t_1^{1}t_2^{-1}t_3^{-1}t_4^{1}+2t_1^{1}t_4^{-1}t_5^{1}+2t_1^{1}t_5^{-1}t_6^{1}
+2t_1^{1}t_6^{-1}t_7^{1}+2t_1^{1}t_7^{-1}-2t_1^{-1}t_7^{1}-2t_1^{-1}t_6^{1}t_7^{-1}\\
&-2t_1^{-1}t_5^{1}t_6^{-1}
-2t_1^{-1}t_4^{1}t_5^{-1}-2t_1^{-1}t_2^{1}t_3^{1}t_4^{-1}-2t_1^{-1}t_2^{-1}t_3^{1}-2t_2^{1}t_3^{-1}-2t_2^{-1}t_3^{-1}t_4^{1}\\
&-2t_4^{-1}t_5^{1}-2t_5^{-1}t_6^{1}-2t_6^{-1}t_7^{1}-2t_7^{-1}\big)
\end{align*}
\begin{align*}
    \Theta_2(t)=&\frac{1}{24}\big( 3t_7^{1}+3t_6^{1}t_7^{-1}+3t_5^{1}t_6^{-1}+3t_4^{1}t_5^{-1}+3t_2^{1}t_3^{1}t_4^{-1}
+t_2^{-1}t_3^{1}+3t_1^{1}t_2^{1}t_3^{-1}\\
&+t_1^{1}t_2^{-1}t_3^{-1}t_4^{1}+3t_1^{-1}t_2^{1}+t_1^{-1}t_2^{-1}t_4^{1}
+t_1^{1}t_4^{-1}t_5^{1}+t_1^{-1}t_3^{1}t_4^{-1}t_5^{1}+t_1^{1}t_5^{-1}t_6^{1}+t_3^{-1}t_5^{1}\\
&+t_1^{-1}t_3^{1}t_5^{-1}t_6^{1}
+t_1^{1}t_6^{-1}t_7^{1}+t_3^{-1}t_4^{1}t_5^{-1}t_6^{1}+t_1^{-1}t_3^{1}t_6^{-1}t_7^{1}+t_1^{1}t_7^{-1}+t_2^{1}t_4^{-1}t_6^{1}\\
&+t_3^{-1}t_4^{1}t_6^{-1}t_7^{1}+t_1^{-1}t_3^{1}t_7^{-1}-t_2^{-1}t_6^{1}+t_2^{1}t_4^{-1}t_5^{1}t_6^{-1}t_7^{1}+t_3^{-1}t_4^{1}t_7^{-1}
-t_2^{-1}t_5^{1}t_6^{-1}t_7^{1}\\
&+t_2^{1}t_5^{-1}t_7^{1}+t_2^{1}t_4^{-1}t_5^{1}t_7^{-1}-t_2^{-1}t_4^{1}t_5^{-1}t_7^{1}-t_2^{-1}t_5^{1}t_7^{-1}
+t_2^{1}t_5^{-1}t_6^{1}t_7^{-1}-t_3^{1}t_4^{-1}t_7^{1}\\
&-t_2^{-1}t_4^{1}t_5^{-1}t_6^{1}t_7^{-1}+t_2^{1}t_6^{-1}-t_1^{1}t_3^{-1}t_7^{1}
-t_3^{1}t_4^{-1}t_6^{1}t_7^{-1}-t_2^{-1}t_4^{1}t_6^{-1}-t_1^{-1}t_7^{1}-t_1^{1}t_3^{-1}t_6^{1}t_7^{-1}\\
&-t_3^{1}t_4^{-1}t_5^{1}t_6^{-1}
-t_1^{-1}t_6^{1}t_7^{-1}-t_1^{1}t_3^{-1}t_5^{1}t_6^{-1}-t_3^{1}t_5^{-1}-t_1^{-1}t_5^{1}t_6^{-1}-t_1^{1}t_3^{-1}t_4^{1}t_5^{-1}\\
&-t_1^{-1}t_4^{1}t_5^{-1}-t_1^{1}t_2^{1}t_4^{-1}-t_1^{-1}t_2^{1}t_3^{1}t_4^{-1}-3t_1^{1}t_2^{-1}-3t_1^{-1}t_2^{-1}t_3^{1}
-t_2^{1}t_3^{-1}\\
&-3t_2^{-1}t_3^{-1}t_4^{1}-3t_4^{-1}t_5^{1}-3t_5^{-1}t_6^{1}-3t_6^{-1}t_7^{1}
-3t_7^{-1} \big)
\end{align*}
\begin{align*}
    \Theta_3(t)=&\frac{1}{24}\big( 4t_7^{1}+4t_6^{1}t_7^{-1}+4t_5^{1}t_6^{-1}+4t_4^{1}t_5^{-1}+4t_2^{1}t_3^{1}t_4^{-1}
+4t_2^{-1}t_3^{1}+2t_1^{1}t_2^{1}t_3^{-1}\\
&+2t_1^{1}t_2^{-1}t_3^{-1}t_4^{1}+2t_1^{-1}t_2^{1}+2t_1^{-1}t_2^{-1}t_4^{1}
+2t_1^{1}t_4^{-1}t_5^{1}+2t_1^{-1}t_3^{1}t_4^{-1}t_5^{1}+2t_1^{1}t_5^{-1}t_6^{1}\\
&+2t_1^{-1}t_3^{1}t_5^{-1}t_6^{1}+2t_1^{1}t_6^{-1}t_7^{1}
+2t_1^{-1}t_3^{1}t_6^{-1}t_7^{1}+2t_1^{1}t_7^{-1}+2t_1^{-1}t_3^{1}t_7^{-1}-2t_1^{1}t_3^{-1}t_7^{1}-2t_1^{-1}t_7^{1}\\
&-2t_1^{1}t_3^{-1}t_6^{1}t_7^{-1}-2t_1^{-1}t_6^{1}t_7^{-1}-2t_1^{1}t_3^{-1}t_5^{1}t_6^{-1}-2t_1^{-1}t_5^{1}t_6^{-1}-2t_1^{1}t_3^{-1}t_4^{1}t_5^{-1}\\
&-2t_1^{-1}t_4^{1}t_5^{-1}-2t_1^{1}t_2^{1}t_4^{-1}-2t_1^{-1}t_2^{1}t_3^{1}t_4^{-1}-2t_1^{1}t_2^{-1}-2t_1^{-1}t_2^{-1}t_3^{1}\\
&-4t_2^{1}t_3^{-1}-4t_2^{-1}t_3^{-1}t_4^{1}-4t_4^{-1}t_5^{1}-4t_5^{-1}t_6^{1}-4t_6^{-1}t_7^{1}
-4t_7^{-1} \big)
\end{align*}
\begin{align*}
    \Theta_4(t)=&\frac{1}{24}\big( 6t_7^{1}+6t_6^{1}t_7^{-1}+6t_5^{1}t_6^{-1}+6t_4^{1}t_5^{-1}+4t_2^{1}t_3^{1}t_4^{-1}\\
&+4t_2^{-1}t_3^{1}+4t_1^{1}t_2^{1}t_3^{-1}+4t_1^{1}t_2^{-1}t_3^{-1}t_4^{1}+4t_1^{-1}t_2^{1}+4t_1^{-1}t_2^{-1}t_4^{1}\\
&+2t_1^{1}t_4^{-1}t_5^{1}+2t_1^{-1}t_3^{1}t_4^{-1}t_5^{1}+2t_1^{1}t_5^{-1}t_6^{1}+2t_3^{-1}t_5^{1}+2t_1^{-1}t_3^{1}t_5^{-1}t_6^{1}\\
&+2t_1^{1}t_6^{-1}t_7^{1}+2t_3^{-1}t_4^{1}t_5^{-1}t_6^{1}+2t_1^{-1}t_3^{1}t_6^{-1}t_7^{1}+2t_1^{1}t_7^{-1}+2t_3^{-1}t_4^{1}t_6^{-1}t_7^{1}\\
&+2t_1^{-1}t_3^{1}t_7^{-1}+2t_3^{-1}t_4^{1}t_7^{-1}-2t_3^{1}t_4^{-1}t_7^{1}-2t_1^{1}t_3^{-1}t_7^{1}-2t_3^{1}t_4^{-1}t_6^{1}t_7^{-1}\\
&-2t_1^{-1}t_7^{1}-2t_1^{1}t_3^{-1}t_6^{1}t_7^{-1}-2t_3^{1}t_4^{-1}t_5^{1}t_6^{-1}-2t_1^{-1}t_6^{1}t_7^{-1}-2t_1^{1}t_3^{-1}t_5^{1}t_6^{-1}\\
&-2t_3^{1}t_5^{-1}-2t_1^{-1}t_5^{1}t_6^{-1}-2t_1^{1}t_3^{-1}t_4^{1}t_5^{-1}-2t_1^{-1}t_4^{1}t_5^{-1}-4t_1^{1}t_2^{1}t_4^{-1}\\
&-4t_1^{-1}t_2^{1}t_3^{1}t_4^{-1}-4t_1^{1}t_2^{-1}-4t_1^{-1}t_2^{-1}t_3^{1}-4t_2^{1}t_3^{-1}-4t_2^{-1}t_3^{-1}t_4^{1}\\
&-6t_4^{-1}t_5^{1}-6t_5^{-1}t_6^{1}-6t_6^{-1}t_7^{1}-6t_7^{-1} \big)
\end{align*}
\begin{align*}
    \Theta_5(t)=&\frac{1}{24}\big( 5t_7^{1}+5t_6^{1}t_7^{-1}+5t_5^{1}t_6^{-1}+3t_4^{1}t_5^{-1}+3t_2^{1}t_3^{1}t_4^{-1}\\
&+3t_2^{-1}t_3^{1}+3t_1^{1}t_2^{1}t_3^{-1}+3t_1^{1}t_2^{-1}t_3^{-1}t_4^{1}+3t_1^{-1}t_2^{1}+3t_1^{-1}t_2^{-1}t_4^{1}\\
&+3t_1^{1}t_4^{-1}t_5^{1}+3t_1^{-1}t_3^{1}t_4^{-1}t_5^{1}+t_1^{1}t_5^{-1}t_6^{1}+3t_3^{-1}t_5^{1}+t_1^{-1}t_3^{1}t_5^{-1}t_6^{1}\\
&+t_1^{1}t_6^{-1}t_7^{1}+t_3^{-1}t_4^{1}t_5^{-1}t_6^{1}+t_1^{-1}t_3^{1}t_6^{-1}t_7^{1}+t_1^{1}t_7^{-1}+t_2^{1}t_4^{-1}t_6^{1}\\
&+t_3^{-1}t_4^{1}t_6^{-1}t_7^{1}+t_1^{-1}t_3^{1}t_7^{-1}+t_2^{-1}t_6^{1}+t_2^{1}t_4^{-1}t_5^{1}t_6^{-1}t_7^{1}+t_3^{-1}t_4^{1}t_7^{-1}\\
&+t_2^{-1}t_5^{1}t_6^{-1}t_7^{1}-t_2^{1}t_5^{-1}t_7^{1}+t_2^{1}t_4^{-1}t_5^{1}t_7^{-1}-t_2^{-1}t_4^{1}t_5^{-1}t_7^{1}+t_2^{-1}t_5^{1}t_7^{-1}\\
&-t_2^{1}t_5^{-1}t_6^{1}t_7^{-1}-t_3^{1}t_4^{-1}t_7^{1}-t_2^{-1}t_4^{1}t_5^{-1}t_6^{1}t_7^{-1}-t_2^{1}t_6^{-1}-t_1^{1}t_3^{-1}t_7^{1}\\
&-t_3^{1}t_4^{-1}t_6^{1}t_7^{-1}-t_2^{-1}t_4^{1}t_6^{-1}-t_1^{-1}t_7^{1}-t_1^{1}t_3^{-1}t_6^{1}t_7^{-1}-t_3^{1}t_4^{-1}t_5^{1}t_6^{-1}\\
&-t_1^{-1}t_6^{1}t_7^{-1}-t_1^{1}t_3^{-1}t_5^{1}t_6^{-1}-3t_3^{1}t_5^{-1}-t_1^{-1}t_5^{1}t_6^{-1}-3t_1^{1}t_3^{-1}t_4^{1}t_5^{-1}\\
&-3t_1^{-1}t_4^{1}t_5^{-1}-3t_1^{1}t_2^{1}t_4^{-1}-3t_1^{-1}t_2^{1}t_3^{1}t_4^{-1}-3t_1^{1}t_2^{-1}-3t_1^{-1}t_2^{-1}t_3^{1}\\
&-3t_2^{1}t_3^{-1}-3t_2^{-1}t_3^{-1}t_4^{1}-3t_4^{-1}t_5^{1}-5t_5^{-1}t_6^{1}-5t_6^{-1}t_7^{1}
-5t_7^{-1} \big)
\end{align*}
\begin{align*}
    \Theta_6(t)=&\frac{1}{24}\big( 4t_7^{1}+4t_6^{1}t_7^{-1}+2t_5^{1}t_6^{-1}+2t_4^{1}t_5^{-1}+2t_2^{1}t_3^{1}t_4^{-1}\\
&+2t_2^{-1}t_3^{1}+2t_1^{1}t_2^{1}t_3^{-1}+2t_1^{1}t_2^{-1}t_3^{-1}t_4^{1}+2t_1^{-1}t_2^{1}+2t_1^{-1}t_2^{-1}t_4^{1}\\
&+2t_1^{1}t_4^{-1}t_5^{1}+2t_1^{-1}t_3^{1}t_4^{-1}t_5^{1}+2t_1^{1}t_5^{-1}t_6^{1}+2t_3^{-1}t_5^{1}+2t_1^{-1}t_3^{1}t_5^{-1}t_6^{1}\\
&+2t_3^{-1}t_4^{1}t_5^{-1}t_6^{1}+2t_2^{1}t_4^{-1}t_6^{1}+2t_2^{-1}t_6^{1}-2t_2^{1}t_6^{-1}-2t_2^{-1}t_4^{1}t_6^{-1}\\
&-2t_3^{1}t_4^{-1}t_5^{1}t_6^{-1}-2t_1^{1}t_3^{-1}t_5^{1}t_6^{-1}-2t_3^{1}t_5^{-1}-2t_1^{-1}t_5^{1}t_6^{-1}-2t_1^{1}t_3^{-1}t_4^{1}t_5^{-1}\\
&-2t_1^{-1}t_4^{1}t_5^{-1}-2t_1^{1}t_2^{1}t_4^{-1}-2t_1^{-1}t_2^{1}t_3^{1}t_4^{-1}-2t_1^{1}t_2^{-1}-2t_1^{-1}t_2^{-1}t_3^{1}\\
&-2t_2^{1}t_3^{-1}-2t_2^{-1}t_3^{-1}t_4^{1}-2t_4^{-1}t_5^{1}-2t_5^{-1}t_6^{1}-4t_6^{-1}t_7^{1}
-4t_7^{-1} \big)
\end{align*}
\begin{align*}
    \Theta_7(t)=&\frac{1}{24}\big( 3t_7^{1}+t_6^{1}t_7^{-1}+t_5^{1}t_6^{-1}+t_4^{1}t_5^{-1}+t_2^{1}t_3^{1}t_4^{-1}
+t_2^{-1}t_3^{1}+t_1^{1}t_2^{1}t_3^{-1}\\
&+t_1^{1}t_2^{-1}t_3^{-1}t_4^{1}+t_1^{-1}t_2^{1}+t_1^{-1}t_2^{-1}t_4^{1}
+t_1^{1}t_4^{-1}t_5^{1}+t_1^{-1}t_3^{1}t_4^{-1}t_5^{1}+t_1^{1}t_5^{-1}t_6^{1}+t_3^{-1}t_5^{1}\\
&+t_1^{-1}t_3^{1}t_5^{-1}t_6^{1}
+t_1^{1}t_6^{-1}t_7^{1}+t_3^{-1}t_4^{1}t_5^{-1}t_6^{1}+t_1^{-1}t_3^{1}t_6^{-1}t_7^{1}-t_1^{1}t_7^{-1}+t_2^{1}t_4^{-1}t_6^{1}\\
&+t_3^{-1}t_4^{1}t_6^{-1}t_7^{1}-t_1^{-1}t_3^{1}t_7^{-1}+t_2^{-1}t_6^{1}+t_2^{1}t_4^{-1}t_5^{1}t_6^{-1}t_7^{1}-t_3^{-1}t_4^{1}t_7^{-1}
+t_2^{-1}t_5^{1}t_6^{-1}t_7^{1}\\
&+t_2^{1}t_5^{-1}t_7^{1}-t_2^{1}t_4^{-1}t_5^{1}t_7^{-1}+t_2^{-1}t_4^{1}t_5^{-1}t_7^{1}-t_2^{-1}t_5^{1}t_7^{-1}
-t_2^{1}t_5^{-1}t_6^{1}t_7^{-1}+t_3^{1}t_4^{-1}t_7^{1}\\
&-t_2^{-1}t_4^{1}t_5^{-1}t_6^{1}t_7^{-1}-t_2^{1}t_6^{-1}+t_1^{1}t_3^{-1}t_7^{1}
-t_3^{1}t_4^{-1}t_6^{1}t_7^{-1}-t_2^{-1}t_4^{1}t_6^{-1}+t_1^{-1}t_7^{1}\\
&-t_1^{1}t_3^{-1}t_6^{1}t_7^{-1}-t_3^{1}t_4^{-1}t_5^{1}t_6^{-1}
-t_1^{-1}t_6^{1}t_7^{-1}-t_1^{1}t_3^{-1}t_5^{1}t_6^{-1}-t_3^{1}t_5^{-1}-t_1^{-1}t_5^{1}t_6^{-1}\\
&-t_1^{1}t_3^{-1}t_4^{1}t_5^{-1}
-t_1^{-1}t_4^{1}t_5^{-1}-t_1^{1}t_2^{1}t_4^{-1}-t_1^{-1}t_2^{1}t_3^{1}t_4^{-1}-t_1^{1}t_2^{-1}-t_1^{-1}t_2^{-1}t_3^{1}\\
&-t_2^{1}t_3^{-1}-t_2^{-1}t_3^{-1}t_4^{1}-t_4^{-1}t_5^{1}-t_5^{-1}t_6^{1}-t_6^{-1}t_7^{1}
-3t_7^{-1} \big)
\end{align*}
\end{lemma}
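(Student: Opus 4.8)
The plan is to compute $\theta_{\omega_7}$ on the maximal torus directly from its definition, exploiting that $V(\omega_7)$ is the $56$-dimensional minuscule representation of $E_7$. First I would note that for $\mathbf t\in T$ the operator $\rho_{\omega_7}(\mathbf t)\in\End(V(\omega_7))$ is diagonal, acting on each weight space $V(\omega_7)_\mu$ by the scalar $\mu(\mathbf t)$; since $\omega_7$ is minuscule, every weight has multiplicity $1$ and the $56$ weights form the single orbit $W\omega_7$. By Lemma \ref{lemma1} the value $\theta_{\omega_7}(\mathbf t)$ lies in $\mathfrak t$, so it is the unique $h\in\mathfrak t$ with $\tr_{V(\omega_7)}(\rho_{\omega_7}(\mathbf t)\,h')=\tr_{V(\omega_7)}(h\,h')$ for every $h'\in\mathfrak t$; rewriting the left-hand side as $\sum_{\mu\in W\omega_7}\mu(\mathbf t)\,\mu(h')$ turns this into a single finite linear-algebra problem in $\mathfrak t$, namely orthogonal projection of the diagonal matrix $\rho_{\omega_7}(\mathbf t)$ onto $\mathfrak t\subset\End(V(\omega_7))$ for the trace form.

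Next I would invoke \cite[Theorems 1, 2]{R}, which carry out exactly this projection. Writing $\mathfrak t$ in the coroot basis $\{\alpha_1^\vee,\dots,\alpha_7^\vee\}$, the Gram matrix $M_{ij}=\tr_{V(\omega_7)}(\alpha_i^\vee\alpha_j^\vee)$ is a positive multiple of the Cartan matrix of $E_7$ (here, $12$ times it), and one obtains $\theta_{\omega_7}(\mathbf t)=\sum_i\Theta_i(\mathbf t)\,\alpha_i^\vee$ with $\Theta_i(\mathbf t)=\tfrac{1}{12}\sum_{\mu\in W\omega_7}m_i(\mu)\,\mu(\mathbf t)$, where $m_i(\mu)$ is the $\alpha_i$-coordinate of the weight $\mu$ expanded in simple roots. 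Using the identification $T=\Hom_\Z(\mathfrak t^*_\Z,\C^*)$ with coordinates $t_i=\mathbf t(\omega_i)$, a weight $\mu=\sum_i\langle\mu,\alpha_i^\vee\rangle\,\omega_i$ becomes the Laurent monomial $\mu(\mathbf t)=\prod_i t_i^{\langle\mu,\alpha_i^\vee\rangle}$, which is exactly the shape of the monomials in the statement.

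What remains is purely computational and is recorded in the program \cite{X}: enumerate the $56$ weights of $V(\omega_7)$ by descending from the highest weight $\omega_7$ through repeated subtraction of simple roots (keeping each $\mu$ both in fundamental-weight coordinates, which give the monomial, and in simple-root coordinates, which give $m_i(\mu)$), and assemble $\Theta_1,\dots,\Theta_7$. Since $V(\omega_7)$ is self-dual the weights split into $28$ pairs $\{\mu,-\mu\}$ with $m_i(-\mu)=-m_i(\mu)$, so $\Theta_i=\tfrac1{12}\sum_{\{\mu,-\mu\}}m_i(\mu)\big(\mu(\mathbf t)-\mu(\mathbf t)^{-1}\big)=\tfrac1{24}\sum_{\{\mu,-\mu\}}2m_i(\mu)\big(\mu(\mathbf t)-\mu(\mathbf t)^{-1}\big)$, where $2m_i(\mu)\in\Z$ because $2\omega_7$ lies in the root lattice; this accounts for the prefactor $\tfrac1{24}$ and the integer numerators in the statement. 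As sanity checks one verifies $\theta_{\omega_7}(\mathbf 1)=0$ (equivalently $\sum_{\mu\in W\omega_7}\mu=0$), the automatic identity $d\theta_{\omega_7}|_1=\mathrm{id}_{\mathfrak t}$ from Definition \ref{def1} (equivalently $M=12A$), and that the coefficient of the highest-weight monomial $t_7$ in $\Theta_i$ equals $\tfrac1{12}m_i(\omega_7)$, i.e. the tuple $(2,3,4,6,5,4,3)/24$, matching the displayed formulas.

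I expect the only real obstacle to be the sheer bookkeeping: correctly listing all $56$ weights of the minuscule $E_7$-module together with their two sets of coordinates, and then performing the reassembly without sign or indexing slips. There is no conceptual difficulty once \cite[Theorems 1, 2]{R} are available, which is precisely why the explicit calculation is delegated to \cite{X}.
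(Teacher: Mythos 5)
Your proposal is correct and follows essentially the same route as the paper, which simply determines $\theta_{\omega_7}|_T$ from \cite[Theorems 1, 2]{R} and delegates the enumeration of the $56$ weights of the minuscule module to the program \cite{X}. Your reconstruction of the projection formula, the Gram matrix $12A$ coming from Dynkin index $6$, and the $\tfrac{1}{24}$ prefactor via the $\{\mu,-\mu\}$ pairing all check out against the displayed coefficients.
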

Combining the equation \eqref{eqnborel} with Lemma \ref{lem6.1} and using Theorem \ref{duan2005multiplicative}, we get the following:
\begin{theorem}  The generators of $\Rep^\C_{\poly}(L_r)$ under $\xi^{P_r}$ are mapped as follows:

(a) ($L_1$): For $L_1$, we let 
$$y_1^{L_1}=\Theta_{1},\ y_2^{L_1}=-\Theta_{1}-2\Theta_{2}+2\Theta_{3},\ y_3^{L_1}=-\Theta_{1}+2\Theta_{2}+2\Theta_{3}-2\Theta_{4},$$
$$y_4^{L_1}=-\Theta_{1}+2\Theta_{4}-2\Theta_{5},\ y_5^{L_1}=-\Theta_{1}+2\Theta_{5}-2\Theta_{6},$$ $$y_6^{L_1}=-\Theta_{1}+2\Theta_{6}-2\Theta_{7},\ 
y_7^{L_1}=-\Theta_{1}+2\Theta_{7}.$$
Then,
$$ \Rep^\C_{\poly}(L_1)
=\C[y_1^{L_1}]\otimes_\C\left[R\oplus (y_2^{L_1}y_3^{L_1}y_4^{L_1}y_5^{L_1}y_6^{L_1}y_7^{L_1})R\right],$$
where $R=\C_{\sym}\left[(y_2^{L_1})^2, (y_3^{L_1})^2, (y_4^{L_1})^2 , (y_5^{L_1})^2, (y_6^{L_1})^2, (y_7^{L_1})^2 \right].$\\
Further, the generators of $\Rep^\C_{\poly}(L_1)$ under $\xi^{P_1}$ go to 
$$y_1^{L_1}\mapsto \epsilon_{s_1}^{P_1},$$
$$e_1\left((y_2^{L_1})^2, (y_3^{L_1})^2, (y_4^{L_1})^2, (y_5^{L_1})^2, (y_6^{L_1})^2, (y_7^{L_1})^2\right)\mapsto -2\epsilon_{s_3s_1}^{P_1},$$
$$e_2\left((y_2^{L_1})^2, (y_3^{L_1})^2, (y_4^{L_1})^2, (y_5^{L_1})^2, (y_6^{L_1})^2, (y_7^{L_1})^2\right)\mapsto -57\epsilon_{s_2s_4s_3s_1}^{P_1}+39\epsilon_{s_5s_4s_3s_1}^{P_1},$$
$$e_3\left((y_2^{L_1})^2, (y_3^{L_1})^2, (y_4^{L_1})^2, (y_5^{L_1})^2, (y_6^{L_1})^2, (y_7^{L_1})^2\right)\mapsto$$
$$ -120\epsilon_{s_4s_5s_2s_4s_3s_1}^{P_1}+204\epsilon_{s_6s_5s_2s_4s_3s_1}^{P_1}-316\epsilon_{s_7s_6s_5s_4s_3s_1}^{P_1},$$
$$e_4\left((y_2^{L_1})^2, (y_3^{L_1})^2, (y_4^{L_1})^2, (y_5^{L_1})^2, (y_6^{L_1})^2, (y_7^{L_1})^2\right)\mapsto$$
$$-66\epsilon_{s_1s_3s_4s_5s_2s_4s_3s_1}^{P_1}+249\epsilon_{s_6s_3s_4s_5s_2s_4s_3s_1}^{P_1}-453\epsilon_{s_5s_6s_4s_5s_2s_4s_3s_1}^{P_1}+247\epsilon_{s_7s_6s_4s_5s_2s_4s_3s_1}^{P_1},$$
$$e_5\left((y_2^{L_1})^2, (y_3^{L_1})^2, (y_4^{L_1})^2, (y_5^{L_1})^2, (y_6^{L_1})^2, (y_7^{L_1})^2\right)\mapsto$$
$$438\epsilon_{s_5s_6s_1s_3s_4s_5s_2s_4s_3s_1}^{P_1}-1618\epsilon_{s_7s_6s_1s_3s_4s_5s_2s_4s_3s_1}^{P_1}-408\epsilon_{s_4s_5s_6s_3s_4s_5s_2s_4s_3s_1}^{P_1}$$
$$+1132\epsilon_{s_7s_5s_6s_3s_4s_5s_2s_4s_3s_1}^{P_1}-2140\epsilon_{s_6s_7s_5s_6s_4s_5s_2s_4s_3s_1}^{P_1}.$$
$$e_6\left((y_2^{L_1})^2, (y_3^{L_1})^2, (y_4^{L_1})^2, (y_5^{L_1})^2, (y_6^{L_1})^2, (y_7^{L_1})^2\right)\mapsto$$ $$-27\epsilon_{s_2s_4s_5s_6s_1s_3s_4s_5s_2s_4s_3s_1}^{P_1}+153\epsilon_{s_3s_4s_5s_6s_1s_3s_4s_5s_2s_4s_3s_1}^{P_1}-585\epsilon_{s_7s_4s_5s_6s_1s_3s_4s_5s_2s_4s_3s_1}^{P_1}$$
$$+1552\epsilon_{s_6s_7s_5s_6s_1s_3s_4s_5s_2s_4s_3s_1}^{P_1}+498\epsilon_{s_7s_2s_4s_5s_6s_3s_4s_5s_2s_4s_3s_1}^{P_1}-450\epsilon_{s_6s_7s_4s_5s_6s_3s_4s_5s_2s_4s_3s_1}^{P_1}.$$
$$e_6\left(y_2^{L_1}, y_3^{L_1}, y_4^{L_1}, y_5^{L_1}, y_6^{L_1}, y_7^{L_1}\right)\mapsto -6\epsilon_{s_4s_5s_2s_4s_3s_1}^{P_1} + 15\epsilon_{s_6s_5s_2s_4s_3s_1}^{P_1} - 43\epsilon_{s_7s_6s_5s_4s_3s_1}^{P_1},$$

(b) ($L_2$): For $L_2$, we let 
$$y_1^{L_2}=4\Theta_{1}-2\Theta_{2},\ y_2^{L_2}=-4\Theta_{1}-2\Theta_{2}+4\Theta_{3},\ y_3^{L_2}=-2\Theta_{2}-4\Theta_{3}+4\Theta_{4},$$
$$y_4^{L_2}=2\Theta_{2}-4\Theta_{4}+4\Theta_{5},\ y_5^{L_2}=2\Theta_{2}-4\Theta_{5}+4\Theta_{6},\ y_6^{L_2}=2\Theta_{2}-4\Theta_{6}+4\Theta_{7},\ y_7^{L_2}=2\Theta_{2}-4\Theta_{7}.$$
Then,
$$  \Rep^\C_{\poly}(L_2)
=\C_{\sym}[y_1^{L_2},y_2^{L_2}, y_3^{L_2}, y_4^{L_2}, y_5^{L_2}, y_6^{L_2}, y_7^{L_2}].$$
Further, the generators of $\Rep^\C_{\poly}(L_2)$ under $\xi^{P_2}$ go to 
$$e_1\left( y_1^{L_2}, y_2^{L_2}, y_3^{L_2}, y_4^{L_2}, y_5^{L_2}, y_6^{L_2}, y_7^{L_2}\right)\mapsto 2\epsilon_{s_2}^{P_2},$$
$$e_2\left( y_1^{L_2}, y_2^{L_2}, y_3^{L_2}, y_4^{L_2}, y_5^{L_2}, y_6^{L_2}, y_7^{L_2}\right)\mapsto 4\epsilon_{s_4s_2}^{P_2},$$
$$e_3\left( y_1^{L_2}, y_2^{L_2}, y_3^{L_2}, y_4^{L_2}, y_5^{L_2}, y_6^{L_2}, y_7^{L_2}\right)\mapsto 72\epsilon_{s_3s_4s_2}^{P_2} - 56\epsilon_{s_5s_4s_2}^{P_2},$$
$$e_4\left( y_1^{L_2}, y_2^{L_2}, y_3^{L_2}, y_4^{L_2}, y_5^{L_2}, y_6^{L_2}, y_7^{L_2}\right)\mapsto 432\epsilon_{s_1s_3s_4s_2}^{P_2} - 160\epsilon_{s_5s_3s_4s_2}^{P_2} + 176\epsilon_{s_6s_5s_4s_2}^{P_2},$$
$$e_5\left( y_1^{L_2}, y_2^{L_2}, y_3^{L_2}, y_4^{L_2}, y_5^{L_2}, y_6^{L_2}, y_7^{L_2}\right)\mapsto
32\epsilon_{s_5s_1s_3s_4s_2}^{P_2} - 320\epsilon_{s_4s_5s_3s_4s_2}^{P_2} + 544\epsilon_{s_6s_5s_3s_4s_2}^{P_2} - 1184\epsilon_{s_7s_6s_5s_4s_2}^{P_2},$$
$$e_6\left( y_1^{L_2}, y_2^{L_2}, y_3^{L_2}, y_4^{L_2}, y_5^{L_2}, y_6^{L_2}, y_7^{L_2}\right)\mapsto$$
$$-1088\epsilon_{s_4s_5s_1s_3s_4s_2}^{P_2} + 2176\epsilon_{s_6s_5s_1s_3s_4s_2}^{P_2} + 384\epsilon_{s_2s_4s_5s_3s_4s_2}^{P_2} -64\epsilon_{s_6s_4s_5s_3s_4s_2}^{P_2} -1280\epsilon_{s_7s_6s_5s_3s_4s_2}^{P_2}.$$
$$e_7\left( y_1^{L_2}, y_2^{L_2}, y_3^{L_2}, y_4^{L_2}, y_5^{L_2}, y_6^{L_2}, y_7^{L_2}\right)\mapsto$$
$$-384\epsilon_{s_2s_4s_5s_1s_3s_4s_2}^{P_2} -1152\epsilon_{s_3s_4s_5s_1s_3s_4s_2}^{P_2} + 2048\epsilon_{s_6s_4s_5s_1s_3s_4s_2}^{P_2} -8448\epsilon_{s_7s_6s_5s_1s_3s_4s_2}^{P_2}$$
$$-384\epsilon_{s_6s_2s_4s_5s_3s_4s_2}^{P_2}-1152\epsilon_{s_5s_6s_4s_5s_3s_4s_2}^{P_2}+2432\epsilon_{s_7s_6s_4s_5s_3s_4s_2 }^P.$$

(c) ($L_3$): For $L_3$, we let 
$$y_1^{L_3}=3\Theta_{1}-\Theta_{3},\ y_2^{L_3}=-3\Theta_{1}+2\Theta_{3},\ y_3^{L_3}=-4\Theta_{2}+3\Theta_{3},\ y_4^{L_3}=4\Theta_{2}+3\Theta_{3}-4\Theta_{4},$$
$$y_5^{L_3}=-\Theta_{3}+4\Theta_{4}-4\Theta_{5},\ y_6^{L_3}=-\Theta_{3}+4\Theta_{5}-4\Theta_{6},\ y_7^{L_3}=-\Theta_{3}+4\Theta_{6}-4\Theta_{7},\ y_8^{L_3}=-\Theta_{3}+4\Theta_{7}.$$
Then,
$$ \Rep^\C_{\poly}(L_3)
=\C_{\sym}[y_1^{L_3},y_2^{L_3}] \otimes_\C \C_{\sym}[y_3^{L_3}, y_4^{L_3}, y_5^{L_3}, y_6^{L_3}, y_7^{L_3}, y_8^{L_3}]/R_{7,3},$$
where $R_{7,3}=2y_1^{L_3}+2y_2^{L_3}-y_3^{L_3}- y_4^{L_3}- y_5^{L_3}- y_6^{L_3}- y_7^{L_3}- y_8^{L_3}$.\\
Further, the generators of $\Rep^\C_{\poly}(L_3)$ under $\xi^{P_3}$ go to 
$$e_1\left( y_1^{L_3}, y_2^{L_3}\right)\mapsto \epsilon_{s_3}^{P_3},$$
$$e_2\left( y_1^{L_3}, y_2^{L_3}\right)\mapsto 7\epsilon_{s_1s_3}^{P_3} - 2\epsilon_{s_4s_3}^{P_3},$$
$$e_1\left( y_3^{L_3}, y_4^{L_3}, y_5^{L_3}, y_6^{L_3}, y_7^{L_3}, y_8^{L_3}\right)\mapsto 2\epsilon_{s_3}^{P_3},$$
$$e_2\left( y_3^{L_3}, y_4^{L_3}, y_5^{L_3}, y_6^{L_3}, y_7^{L_3}, y_8^{L_3}\right)\mapsto -9\epsilon_{s_1s_3}^{P_3} +7\epsilon_{s_4s_3}^{P_3},$$
$$e_3\left( y_3^{L_3}, y_4^{L_3}, y_5^{L_3}, y_6^{L_3}, y_7^{L_3}, y_8^{L_3}\right)\to -8\epsilon_{s_4s_1s_3}^{P_3} -68\epsilon_{s_2s_4s_3}^{P_3} +60\epsilon_{s_5s_4s_3}^{P_3},$$
$$e_4\left( y_3^{L_3}, y_4^{L_3}, y_5^{L_3}, y_6^{L_3}, y_7^{L_3}, y_8^{L_3}\right)\mapsto$$
$$93\epsilon_{s_2s_4s_1s_3}^{P_3} - 34\epsilon_{s_3s_4s_1s_3}^{P_3} - 35\epsilon_{s_5s_4s_1s_3}^{P_3} -130\epsilon_{s_5s_2s_4s_3}^{P_3} +255\epsilon_{s_6s_5s_4s_3}^{P_3}.$$
$$e_5\left( y_3^{L_3}, y_4^{L_3}, y_5^{L_3}, y_6^{L_3}, y_7^{L_3}, y_8^{L_3}\right)\mapsto$$
$$42\epsilon_{s_3s_2s_4s_1s_3}^{P_3} + 16\epsilon_{s_5s_2s_4s_1s_3}^{P_3} - 86\epsilon_{s_5s_3s_4s_1s_3}^{P_3} + 136\epsilon_{s_6s_5s_4s_1s_3}^{P_3}+196\epsilon_{s_4s_5s_2s_4s_3}^{P_3} $$ $$- 538\epsilon_{s_6s_5s_2s_4s_3}^{P_3} + 1314\epsilon_{s_7s_6s_5s_4s_3}^{P_3}.$$
$$e_6\left( y_3^{L_3}, y_4^{L_3}, y_5^{L_3}, y_6^{L_3}, y_7^{L_3}, y_8^{L_3}\right)\mapsto$$
$$-35\epsilon_{s_4s_3s_2s_4s_1s_3}^{P_3} + 98\epsilon_{s_5s_3s_2s_4s_1s_3}^{P_3} - 38\epsilon_{s_4s_5s_2s_4s_1s_3}^{P_3} - 121\epsilon_{s_6s_5s_2s_4s_1s_3}^{P_3}$$
$$+93\epsilon_{s_4s_5s_3s_4s_1s_3}^{P_3} - 399\epsilon_{s_6s_5s_3s_4s_1s_3}^{P_3} + 1965\epsilon_{s_7s_6s_5s_4s_1s_3}^{P_3}$$
$$-78\epsilon_{s_3s_4s_5s_2s_4s_3}^{P_3} + 253\epsilon_{s_6s_4s_5s_2s_4s_3}^{P_3} - 1308\epsilon_{s_7s_6s_5s_2s_4s_3}^{P_3}.$$

(d) ($L_4$): For $L_4$, we let 
$$y_1^{L_4}=4\Theta_{1}-\Theta_{4},\ y_2^{L_4}=-4\Theta_{1}+4\Theta_{3}-\Theta_{4},\ y_3^{L_4}=-4\Theta_{3}+3\Theta_{4},$$
$$y_4^{L_4}=-\Theta_{4}+6\Theta_{7},\ y_5^{L_4}=-\Theta_{4}+6\Theta_{6}-6\Theta_{7},\ y_6^{L_4}=-\Theta_{4}+6\Theta_{5}-6\Theta_{6},$$
$$y_7^{L_4}=5\Theta_{4}-6\Theta_{5}, y_8^{L_4}=12\Theta_{2}-5\Theta_{4}, y_9^{L_4}=-12\Theta_{2}+7\Theta_{4}.$$
Then,
$$  \Rep^\C_{\poly}(L_4)
=\C_{\sym}[y_1^{L_4},y_2^{L_4},y_3^{L_4}] \otimes_\C \C_{\sym}[y_4^{L_4}, y_5^{L_4}, y_6^{L_4}, y_7^{L_4}] \otimes_\C \C_{\sym}[y_8^{L_4}, y_9^{L_4}]/(R_{7,4},R_{7,4}'),$$
where $R_{7,4}=y_1^{L_4}+y_2^{L_4}+y_3^{L_4}-y_8^{L_4}- y_9^{L_4}$ and $R_{7,4}'=y_4^{L_4}+ y_5^{L_4}+ y_6^{L_4}+ y_7^{L_4}-y_8^{L_4}- y_9^{L_4}$.\\
Further, the generators of $\Rep^\C_{\poly}(L_4)$ under $\xi^{P_4}$ go to
$$e_1\left( y_1^{L_4}, y_2^{L_4}, y_3^{L_4}\right)\mapsto \epsilon_{s_4}^{P_4},$$
$$e_2\left( y_1^{L_4}, y_2^{L_4}, y_3^{L_4}\right)\mapsto -5\epsilon_{s_2s_4}^{P_4} + 11\epsilon_{s_3s_4}^{P_4} - 5\epsilon_{s_5s_4}^{P_4},$$
$$e_3\left( y_1^{L_4}, y_2^{L_4}, y_3^{L_4}\right)\mapsto
-10\epsilon_{s_3s_2s_4}^{P_4} + 6\epsilon_{s_5s_2s_4}^{P_4} + 51\epsilon_{s_1s_3s_4}^{P_4} -10\epsilon_{s_5s_3s_4}^{P_4} + 3\epsilon_{s_6s_5s_4}^{P_4},$$
$$e_1\left( y_4^{L_4}, y_5^{L_4}, y_6^{L_4}, y_7^{L_4}\right)\mapsto 2\epsilon_{s_4}^{P_4},$$
$$e_2\left( y_4^{L_4}, y_5^{L_4}, y_6^{L_4}, y_7^{L_4}\right)\mapsto -12\epsilon_{s_2s_4}^{P_4} - 12\epsilon_{s_3s_4}^{P_4} 24\epsilon_{s_5s_4}^{P_4},$$
$$e_3\left( y_4^{L_4}, y_5^{L_4}, y_6^{L_4}, y_7^{L_4}\right)\mapsto
28\epsilon_{s_3s_2s_4}^{P_4} - 44\epsilon_{s_5s_2s_4}^{P_4} + 14\epsilon_{s_1s_3s_4}^{P_4} - 44\epsilon_{s_5s_3s_4}^{P_4} + 158\epsilon_{s_6s_5s_4}^{P_4},$$
$$e_4\left( y_4^{L_4}, y_5^{L_4}, y_6^{L_4}, y_7^{L_4}\right)\mapsto$$
$$-15\epsilon_{s_1s_3s_2s_4}^{P_4} - 10\epsilon_{s_4s_3s_2s_4}^{P_4} + 42\epsilon_{s_5s_3s_2s_4}^{P_4} + 26\epsilon_{s_4s_5s_2s_4}^{P_4} - 159\epsilon_{s_6s_5s_2s_4}^{P_4}$$
$$+21\epsilon_{s_5s_1s_3s_4}^{P_4} + 26\epsilon_{s_4s_5s_3s_4}^{P_4} - 159\epsilon_{s_6s_5s_3s_4}^{P_4} + 1111\epsilon_{s_7s_6s_5s_4}^{P_4},$$
$$e_1\left( y_8^{L_4}, y_9^{L_4}\right)\mapsto 2\epsilon_{s_4}^{P_4},$$
$$e_2\left( y_8^{L_4}, y_9^{L_4}\right)\mapsto 109\epsilon_{s_2s_4}^{P_4} - 35\epsilon_{s_3s_4}^{P_4} - 35\epsilon_{s_5s_4}^{P_4}.$$

(e) ($L_5$): For $L_5$, we let 
$$y_1^{L_5}=6\Theta_{1}-2\Theta_{5},\ y_2^{L_5}=-6\Theta_{1}+6\Theta_{3}-2\Theta_{5},\ y_3^{L_5}=-6\Theta_{3}+6\Theta_{4}-2\Theta_{5},\ y_4^{L_5}=6\Theta_{2}-6\Theta_{4}+4\Theta_{5},$$
$$y_5^{L_5}=-6\Theta_{2}+4\Theta_{5},\ y_6^{L_5}=-\Theta_{5}+5\Theta_{7},\ y_7^{L_5}=-\Theta_{5}+5\Theta_{6}-5\Theta_{7},\ y_8^{L_5}=4\Theta_{5}-5\Theta_{6}.$$
Then,
$$  \Rep^\C_{\poly}(L_5)
=\C_{\sym}[y_1^{L_5},y_2^{L_5},y_3^{L_5}, y_4^{L_5}, y_5^{L_5}] \otimes_\C \C_{\sym}[ y_6^{L_5}, y_7^{L_5}, y_8^{L_5}]/R_{7,5},$$
where $R_{7,5}=y_1^{L_5}+y_2^{L_5}+y_3^{L_5}+ y_4^{L_5}+ y_5^{L_5} -  y_6^{L_5}- y_7^{L_5}- y_8^{L_5}$.\\
Further, the generators of $\Rep^\C_{\poly}(L_5)$ under $\xi^{P_5}$ go to
$$e_1\left( y_1^{L_5}, y_2^{L_5},y_3^{L_5}, y_4^{L_5}, y_5^{L_5}\right)\mapsto 2\epsilon_{s_5}^{P_5},$$
$$e_2\left( y_1^{L_5}, y_2^{L_5},y_3^{L_5}, y_4^{L_5}, y_5^{L_5}\right)\mapsto 16\epsilon_{s_4s_5}^{P_5} - 20\epsilon_{s_6s_5}^{P_5},$$
$$e_3\left( y_1^{L_5}, y_2^{L_5},y_3^{L_5}, y_4^{L_5}, y_5^{L_5}\right)\mapsto - 224\epsilon_{s_2s_4s_5}^{P_5} + 208\epsilon_{s_3s_4s_5}^{P_5} - 16\epsilon_{s_6s_4s_5}^{P_5} - 8\epsilon_{s_7s_6s_5}^{P_5},$$
$$e_4\left( y_1^{L_5}, y_2^{L_5},y_3^{L_5}, y_4^{L_5}, y_5^{L_5}\right)\mapsto$$
$$-608\epsilon_{s_3s_2s_4s_5}^{P_5} + 384\epsilon_{s_6s_2s_4s_5}^{P_5} + 1424\epsilon_{s_1s_3s_4s_5}^{P_5} - 48\epsilon_{s_6s_3s_4s_5}^{P_5} - 176\epsilon_{s_5s_6s_4s_5}^{P_5} - 48\epsilon_{s_7s_6s_4s_5}^{P_5},$$
$$e_5\left( y_1^{L_5}, y_2^{L_5},y_3^{L_5}, y_4^{L_5}, y_5^{L_5}\right)\mapsto$$
$$-2976\epsilon_{s_1s_3s_2s_4s_5}^{P_5} + 896\epsilon_{s_4s_3s_2s_4s_5}^{P_5} - 160\epsilon_{s_6s_3s_2s_4s_5}^{P_5} + 224\epsilon_{s_5s_6s_2s_4s_5}^{P_5} + 96\epsilon_{s_7s_6s_2s_4s_5}^{P_5}$$ $$+2944\epsilon_{s_6s_1s_3s_4s_5}^{P_5} - 640\epsilon_{s_5s_6s_3s_4s_5}^{P_5} - 768\epsilon_{s_7s_6s_3s_4s_5}^{P_5} + 512\epsilon_{s_7s_5s_6s_4s_5}^{P_5},$$
$$e_1\left( y_6^{L_5}, y_7^{L_5}, y_8^{L_5}\right)\mapsto 2\epsilon_{s_5}^{P_5},$$
$$e_2\left( y_6^{L_5}, y_7^{L_5}, y_8^{L_5}\right)\mapsto -7\epsilon_{s_4s_5}^{P_5} + 18\epsilon_{s_6s_5}^{P_5},$$
$$e_3\left( y_6^{L_5}, y_7^{L_5}, y_8^{L_5}\right)\mapsto 4\epsilon_{s_2s_4s_5}^{P_5} + 4\epsilon_{s_3s_4s_5}^{P_5} - 17\epsilon_{s_6s_4s_5}^{P_5} + 104\epsilon_{s_7s_6s_5}^{P_5},$$

(f) ($L_6$): For $L_6$, we let 
$$y_1^{L_6}=2\Theta_{1}-\Theta_{6},\ y_2^{L_6}=-2\Theta_{1}+2\Theta_{3}-\Theta_{6},\ y_3^{L_6}=-2\Theta_{3}+2\Theta_{4}-\Theta_{6},\ y_4^{L_6}=2\Theta_{2}-2\Theta_{4}+2\Theta_{5}-\Theta_{6},$$
$$y_5^{L_6}=-2\Theta_{2}+2\Theta_{5}-\Theta_{6},\ y_6^{L_6}=-\Theta_{6}+4\Theta_{7},\ 
y_7^{L_6}=3\Theta_{6}-4\Theta_{7}.$$
Then,
$$ \Rep^\C_{\poly}(L_6)
=\left[R\oplus (y_1^{L_6}y_2^{L_6}y_3^{L_6}y_4^{L_6}y_5^{L_6})R\right]\otimes_\C\C[y_6^{L_6},y_7^{L_6}],$$
where $R=\C_{\sym}\left[(y_1^{L_6})^2, (y_2^{L_6})^2, (y_3^{L_6})^2, (y_4^{L_6})^2 , (y_5^{L_6})^2 \right].$\\
Further, the generators of $\Rep^\C_{\poly}(L_6)$ under $\xi^{P_6}$ go to
$$e_1\left((y_1^{L_6})^2, (y_2^{L_6})^2, (y_3^{L_6})^2, (y_4^{L_6})^2 , (y_5^{L_6})^2\right)\mapsto -3\epsilon_{s_5s_6}^{P_6} + 5\epsilon_{s_7s_6}^{P_6},$$
$$e_2\left((y_1^{L_6})^2, (y_2^{L_6})^2, (y_3^{L_6})^2, (y_4^{L_6})^2 , (y_5^{L_6})^2\right)\mapsto -54\epsilon_{s_2s_4s_5s_6}^{P_6} + 42\epsilon_{s_3s_4s_5s_6}^{P_6} - 2\epsilon_{s_7s_4s_5s_6}^{P_6} + 4\epsilon_{s_6s_7s_5s_6}^{P_6},$$
$$e_3\left((y_1^{L_6})^2, (y_2^{L_6})^2, (y_3^{L_6})^2, (y_4^{L_6})^2 , (y_5^{L_6})^2\right)\mapsto$$
$$174\epsilon_{s_1s_3s_2s_4s_5s_6}^{P_6} - 108\epsilon_{s_4s_3s_2s_4s_5s_6}^{P_6} + 100\epsilon_{s_7s_3s_2s_4s_5s_6}^{P_6} - 54\epsilon_{s_6s_7s_2s_4s_5s_6}^{P_6}$$ $$- 174\epsilon_{s_7s_1s_3s_4s_5s_6}^{P_6} - 22\epsilon_{s_6s_7s_3s_4s_5s_6}^{P_6} + 50\epsilon_{s_5s_6s_7s_4s_5s_6}^{P_6},$$
$$e_4\left((y_1^{L_6})^2, (y_2^{L_6})^2, (y_3^{L_6})^2, (y_4^{L_6})^2 , (y_5^{L_6})^2\right)\mapsto$$
$$-519\epsilon_{s_3s_4s_1s_3s_2s_4s_5s_6}^{P_6}+291\epsilon_{s_5s_4s_1s_3s_2s_4s_5s_6}^{P_6} +207\epsilon_{s_7s_4s_1s_3s_2s_4s_5s_6}^{P_6} -164\epsilon_{s_6s_7s_1s_3s_2s_4s_5s_6}^{P_6}$$
$$+42\epsilon_{s_6s_5s_4s_3s_2s_4s_5s_6}^{P_6} -250\epsilon_{s_7s_5s_4s_3s_2s_4s_5s_6}^{P_6} +168\epsilon_{s_6s_7s_4s_3s_2s_4s_5s_6}^{P_6} -136\epsilon_{s_5s_6s_7s_3s_2s_4s_5s_6}^{P_6}$$
$$+70\epsilon_{s_4s_5s_6s_7s_2s_4s_5s_6}^{P_6} -4\epsilon_{s_5s_6s_7s_1s_3s_4s_5s_6}^{P_6} +102\epsilon_{s_4s_5s_6s_7s_3s_4s_5s_6}^{P_6}.$$
$$e_5\left((y_1^{L_6})^2, (y_2^{L_6})^2, (y_3^{L_6})^2, (y_4^{L_6})^2 , (y_5^{L_6})^2\right)\mapsto$$
$$-180\epsilon_{s_4s_5s_3s_4s_1s_3s_2s_4s_5s_6}^{P_6}
+333\epsilon_{s_6s_5s_3s_4s_1s_3s_2s_4s_5s_6}^{P_6}
-84\epsilon_{s_7s_5s_3s_4s_1s_3s_2s_4s_5s_6}^{P_6}
+223\epsilon_{s_6s_7s_3s_4s_1s_3s_2s_4s_5s_6}^{P_6}$$
$$-444\epsilon_{s_7s_6s_5s_4s_1s_3s_2s_4s_5s_6}^{P_6}
-106\epsilon_{s_6s_7s_5s_4s_1s_3s_2s_4s_5s_6}^{P_6}
-76\epsilon_{s_5s_6s_7s_4s_1s_3s_2s_4s_5s_6}^{P_6}
-131\epsilon_{s_4s_5s_6s_7s_1s_3s_2s_4s_5s_6}^{P_6}$$
$$+152\epsilon_{s_6s_7s_6s_5s_4s_3s_2s_4s_5s_6}^{P_6}
+12\epsilon_{s_5s_6s_7s_5s_4s_3s_2s_4s_5s_6}^{P_6}
+26\epsilon_{s_4s_5s_6s_7s_4s_3s_2s_4s_5s_6}^{P_6}
+136\epsilon_{s_2s_4s_5s_6s_7s_3s_2s_4s_5s_6}^{P_6}$$
$$-56\epsilon_{s_3s_4s_5s_6s_7s_3s_2s_4s_5s_6}^{P_6}
+264\epsilon_{s_1s_3s_4s_5s_6s_7s_2s_4s_5s_6}^{P_6}
-504\epsilon_{s_2s_4s_5s_6s_7s_1s_3s_4s_5s_6}^{P_6}
+506\epsilon_{s_3s_4s_5s_6s_7s_1s_3s_4s_5s_6}^{P_6}.$$
$$e_5\left(y_1^{L_6}, y_2^{L_6}, y_3^{L_6}, y_4^{L_6}, y_5^{L_6}\right)\mapsto$$
$$6\epsilon_{s_3s_2s_4s_5s_6}^{P_6} -8\epsilon_{s_7s_2s_4s_5s_6}^{P_6} -21\epsilon_{s_1s_3s_4s_5s_6}^{P_6} +8\epsilon_{s_7s_3s_4s_5s_6}^{P_6} -1\epsilon_{s_6s_7s_4s_5s_6}^{P_6},$$
and 
$$e_1\left(y_6^{L_6}, y_7^{L_6}\right) \mapsto  2\epsilon_{s_6}^{P_6},$$
$$e_2\left(y_6^{L_6}, y_7^{L_6}\right) \mapsto  -3\epsilon_{s_5s_6}^{P_6} +13\epsilon_{s_7s_6}^{P_6}.$$

(g) ($L_7$): 
For $E_6$, we let
\begin{align*}
    x_1  &= 5\alpha_1 + 4\alpha_3 + 3\alpha_4 + 2\alpha_5 + \alpha_6,\,\,
    x_2  = -\alpha_1 + 4\alpha_3 + 3\alpha_4 + 2\alpha_5 + \alpha_6,\\
    x_3  &= -\alpha_1 - 2\alpha_3 + 3\alpha_4 + 2\alpha_5 + \alpha_6,\,\,
    x_4  = -\alpha_1 - 2\alpha_3 - 3\alpha_4 + 2\alpha_5 + \alpha_6,\\
    x_5  &= -\alpha_1 - 2\alpha_3 - 3\alpha_4 - 4\alpha_5 + \alpha_6,\,,
    x_6  = -\alpha_1 - 2\alpha_3 - 3\alpha_4 - 4\alpha_5 - 5\alpha_6\\
\end{align*}
and
$$x= -3(\alpha_1 + 2\alpha_2 + 2\alpha_3 + 3\alpha_4 + 2\alpha_5 + \alpha_6).$$
Set
\begin{align*}
    a_i& =x_i+x,\ b_i=x_i-x\ (i=1,\dots,6),\\
    c_{ij}&= -x_i-x_j\ (1\le i<j\le 6).
\end{align*}
Then,  the following provides a set of generators for the Weyl group invariant polynomials in $S(\ft_6^*)=\C[\alpha_1, \dots, \alpha_6]$ for $E_6$, where $\ft_6$ is the Cartan subalgebra of $E_6$
 (cf. \cite[$\S$3]{lee}, \cite{C}): 
\begin{equation}\label{E6generator}
    \psi_m=\sum_{i=1}^6a_i^m+\sum_{i=1}^6b_i^m+\sum_{1\leq i<j\leq 6}c_{ij}^m,\ m=2,5,6,8,9,12.
\end{equation}
We view $\psi_m$ as elements of $S(\ft^*) = \C[\alpha_1, \dots, \alpha_6, \alpha_7]\supset \C[\alpha_1, \dots, \alpha_6]$.
Then, 
$$S(\ft^*)^{W_7} = \C[\psi_2,\psi_5,\psi_6,\psi_8,\psi_9,\psi_{12}]\otimes_\C \C[\omega_7],$$
where $W_7$ is the Weyl group of $L_7$.
Thus, 
$$\Rep^\C_{\poly}(L_7)=\C[\bar{\psi}_2,\bar{\psi}_5,\bar{\psi}_6,\bar{\psi}_8,\bar{\psi}_9,\bar{\psi}_{12}]\otimes_\C \C[\bar{\omega}_7],$$
 where $\bar{\psi}_m := \theta_{\omega_7}^*(\psi_m)$ and $\bar{\omega}_7:= \theta_{\omega_7}^*(\omega_7)$ 
 ($\theta_{\omega_7}^*: S(\ft^*)^{W_7} \to  \Rep^\C_{\poly}(L_7)$  being the induced map from the Springer morphism $\theta_{\omega_7}: T \to \ft$). 
 
  Further, the generators of $\Rep^\C_{\poly}(L_7)$  
   under $\xi^{P_7}$ go to
$$\frac{1}{144}\bar{\psi}_2 \mapsto -\epsilon_{s_6s_7}^{P_7},$$
$$\frac{1}{5760}\bar{\psi}_5 \mapsto -92\epsilon_{s_2s_4s_5s_6s_7}^{P_7} + 70\epsilon_{s_3s_4s_5s_6s_7}^{P_7},$$
$$\frac{1}{3456}\bar{\psi}_6 \mapsto -508\epsilon_{s_3s_2s_4s_5s_6s_7}^{P_7} + 1366\epsilon_{s_1s_3s_4s_5s_6s_7}^{P_7},$$
$$\frac{1}{9216}\bar{\psi}_8 \mapsto -20165\epsilon_{s_4s_1s_3s_2s_4s_5s_6s_7}^{P_7} + 23686\epsilon_{s_5s_4s_3s_2s_4s_5s_6s_7}^{P_7},$$
$$\frac{1}{387072}\bar{\psi}_9 \mapsto -17632\epsilon_{s_3s_4s_1s_3s_2s_4s_5s_6s_7}^{P_7} + 14584\epsilon_{s_5s_4s_1s_3s_2s_4s_5s_6s_7}^{P_7} -7150\epsilon_{s_6s_5s_4s_3s_2s_4s_5s_6s_7}^{P_7},$$
\begin{align*}
    \frac{1}{663552}\bar{\psi}_{12} & \mapsto -6513792\epsilon_{s_2s_4s_5s_3s_4s_1s_3s_2s_4s_5s_6s_7}^{P_7} + 2660262 \epsilon_{s_6s_4s_5s_3s_4s_1s_3s_2s_4s_5s_6s_7}^{P_7}\\
    & -1961752\epsilon_{s_7s_6s_5s_3s_4s_1s_3s_2s_4s_5s_6s_7}^{P_7}.
\end{align*}
and $$\bar{\omega}_7\mapsto \epsilon_{s_7}^{P_7}$$
\end{theorem}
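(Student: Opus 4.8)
The plan is to compute $\xi^{P_r}$ on a set of algebra generators of $\Rep^\C_{\poly}(L_r)$ directly from its construction in Theorem~\ref{thmmain}, namely $\xi^{P_r}=\beta^{P_r}\circ(\theta_{\omega_7}(P_r)^*)^{-1}$, where $\theta_{\omega_7}(P_r)^*\colon S(\ft^*)^{W_r}\to\Rep^\C_{\poly}(L_r)$ is the algebra isomorphism induced by the Springer morphism and $\beta^{P_r}\colon S(\ft^*)^{W_r}\to H^*(G/P_r,\C)$ is the surjective Borel homomorphism restricted to $W_r$-invariants. The calculation is carried out separately for each node $r=1,\dots,7$. \emph{Step 1 (the invariant ring $S(\ft^*)^{W_r}$).} Deleting node $r$ from the $E_7$ Dynkin diagram (Bourbaki numbering, \cite{Bo}) shows that $L_r$ is of classical type for $1\le r\le 6$ -- a product of a general linear factor with a type-$A$ or a type-$D$ factor -- and of type $E_6$ for $r=7$. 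For $r\le 6$ one reads off a presentation of $\Rep^\C_{\poly}(L_r)$, equivalently of $S(\ft^*)^{W_r}$, from \cite[Proposition 12]{Ku2}: it is a polynomial algebra (or a mixed ``symmetric in the squares'' algebra in the type-$D$ case) on explicit linear forms in the simple-root coordinates that, after the substitution of Step~2, become the $y_i^{L_r}$. For $r=7$ one instead uses the classical description of the $E_6$-Weyl invariants (cf.\ \cite{lee}, \cite{C}), i.e.\ the invariants $\psi_m$ of degrees $m\in\{2,5,6,8,9,12\}$ built from the $27$ linear forms $a_i,b_i,c_{ij}$, together with the $W_7$-fixed weight $\omega_7$. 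In every case one passes from the simple-root basis to the fundamental-weight basis via the Cartan matrix, so that each generator is written as a polynomial in $\omega_1,\dots,\omega_7$.

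\emph{Step 2 (pull-back along the Springer morphism).} The restriction $\theta_{\omega_7}|_T$ is given explicitly in Lemma~\ref{lem6.1}; in the chosen coordinates the isomorphism $\theta_{\omega_7}(P_r)^*$ carries the coordinate function $\omega_j$ on $\ft$ to $\Theta_j\in\C[T]\simeq\Rep^\C(T)$. Substituting $\Theta_j$ for $\omega_j$ in the generators of Step~1 therefore produces precisely the linear forms $y_i^{L_r}$ (and, for $L_7$, the elements $\bar\psi_m=\theta_{\omega_7}^*(\psi_m)$ and $\bar\omega_7$), and reorganizing the resulting symmetric functions according to the tensor/quotient structure of $S(\ft^*)^{W_r}$ gives the presentations of $\Rep^\C_{\poly}(L_r)$ stated in parts (a)--(g).

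\emph{Step 3 (the Borel homomorphism and Duan's formula).} By the identity~\eqref{eqnborel} one has $\beta(\omega_j)=\epsilon_{s_j}$, so $\xi^{P_r}$ sends each generator from Step~2 to the same polynomial with $\omega_j$ replaced by the Schubert divisor $\epsilon_{s_j}\in H^2(G/B,\C)$; thus a degree-$m$ generator lands in $H^{2m}(G/B,\C)$, written as a homogeneous degree-$m$ expression in Schubert divisors. To rewrite it in the Schubert basis $\{\epsilon_w\}$ one repeatedly applies Duan's multiplicative formula (Theorem~\ref{duan2005multiplicative}) to reduce the product of divisors. Finally, using $H^*(G/P_r,\C)\simeq H^*(G/B,\C)^{W_r}$ together with $\epsilon^{P_r}_w=\pi^*\epsilon_w$, one retains exactly the terms indexed by $w\in W^{P_r}$, which yields the coefficients displayed in (a)--(g).

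The main obstacle is purely the size of the computation: $|W(E_7)|=2903040$, the Schubert bases in degrees up to $12$ are large, and each evaluation of Duan's operator $T_{A_w}$ runs over reduced decompositions of $w$ and over all $2^{\ell(w)}$ subsets of positions. For this reason Steps~1 and~3 are executed by the computer program \cite{X}, built on the algorithms of \cite{blee} and \cite{D}; the content of the argument above is to certify that the program's output equals $\xi^{P_r}$ on the indicated generators, once one verifies -- which is routine -- that the sets of Weyl-invariant generators used in Step~1 are complete.
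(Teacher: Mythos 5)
Your proposal follows exactly the paper's own route: the paper proves this theorem by combining the Borel identity \eqref{eqnborel} with the explicit Springer morphism of Lemma \ref{lem6.1} and Duan's formula (Theorem \ref{duan2005multiplicative}), with the invariant rings $S(\ft^*)^{W_r}$ obtained from \cite[Proposition 12]{Ku2} for the classical Levis and from \cite{lee} for $L_7$, and the actual Schubert-basis coefficients produced by the computer program \cite{X}. Your three steps are a faithful (and somewhat more detailed) articulation of that same argument, so there is nothing to add.
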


    \bibliographystyle{plain}
\def\noopsort#1{}

\vskip5ex

\noindent
Address: Shrawan Kumar,
Department of Mathematics,
University of North Carolina,
Chapel Hill, NC  27599--3250. 
\noindent
email: shrawan@email.unc.edu
\vskip1ex

Jiale Xie,
Department of Mathematics,
University of North Carolina,
Chapel Hill, NC  27599--3250. 
\noindent
email: caleb89@live.unc.edu

\end{document}